\def\dual{\,^{^{\complement}}\!}
\def\Rn{{\mathbb{R}^n}}
\def\i{\infty}
\def\a {\alpha}
 \newtheorem{thm}{Theorem}[section]
 \newtheorem{lem}[thm]{Lemma}
 \theoremstyle{definition}
 \newtheorem{defn}[thm]{Definition}
 \theoremstyle{remark}
 \newtheorem{rem}[thm]{Remark}
 \numberwithin{equation}{section}
\def\Rn{{\mathbb{R}^n}}
\def\a {\alpha}
\def\i{\infty}
\def\L1loc{L_{\Phi}^{\rm loc}(\Rn)}
\def\dual{\,^{^{\complement}}\!}
\newcommand{\ess}{\mathop{\rm ess \; sup}\limits}
\newcommand{\es}{\mathop{\rm ess \; inf}\limits}
\begin{document}

\begin{center}
\LARGE A characterization for fractional integral and its commutators in Orlicz and generalized Orlicz-Morrey spaces on spaces of homogeneous type
\end{center}

\

\vspace{3mm}
\centerline{\large Vagif S. Guliyev}
\centerline{\it Department of Mathematics, Ahi Evran University, Kirsehir, Turkey}
\centerline{\it Institute of Mathematics and Mechanics, Baku, Azerbaijan}
\centerline{vagif@guliyev.com}

\vspace{3mm}

\centerline{\large Fatih Deringoz}
\centerline{\it Department of Mathematics, Ahi Evran University, Kirsehir, Turkey}
\centerline{deringoz@hotmail.com}

\

\begin{abstract}
In this paper, we investigate the boundedness of maximal operator and its commutators in generalized Orlicz-Morrey spaces on the spaces of homogeneous type.
As an application of this boundedness, we give necessary and sufficient condition for the Adams type boundedness of fractional integral and its commutators in these spaces. We also discuss criteria for the boundedness of these operators in Orlicz spaces.
\end{abstract}

\

\

\noindent{\bf AMS Mathematics Subject Classification:} $~~$ 42B20, 42B25, 42B35

\noindent{\bf Key words:} {Orlicz space; Generalized Orlicz-Morrey space; Maximal operator; Fractional integral; Commutator; Spaces of homogeneous type}

\

\section{Introduction}

In the 1970's, in order to extend the theory of Calder\'{o}n-Zygmund singular
integrals to a more general setting, R. Coifman and G. Weiss introduced
certain topological measure spaces which are equipped with a metric which
is compatible with the given measure in a sense. These spaces are called spaces of homogeneous type. In this
work, we find necessary and sufficient conditions for the boundedness of fractional integral and its commutators
in Orlicz and generalized Orlicz-Morrey spaces on spaces of homogeneous type.

As a generalization of $L_p(\Rn)$, the Orlicz spaces were introduced by Birnbaum-Orlicz in \cite{Birnbaum-Orlicz} and Orlicz in \cite{Orlicz1}, since then, the theory of the Orlicz spaces themselves has been well developed and the spaces have been widely used in probability, statistics, potential theory, partial differential equations, as well as harmonic analysis and some other fields of analysis. They have been thoroughly investigated, and two excellent monographs \cite{KrasnRut} and \cite{RaoRen} are available on this subject. Also \cite{BenSharp} provides a good overview on the subject.

The spaces $M_{p,\varphi}(\Rn)$ defined by the norm
$$
\left\| f\right\|_{M_{p,\varphi}}:
= \sup_{x \in \Rn, \; r>0 } \varphi(r)^{-1}\,|B(x,r)|^{-\frac{1}{p}} \|f\|_{L_p (B(x,r))}
$$
with a  function $\varphi$ positive and measurable on $\Rn\times (0,\infty )$ are known as generalized Morrey spaces. For certain functions $\varphi$, the spaces $M_{p,\varphi}(\Rn)$ reduce to some classical spaces. For instance, if $\varphi(r)=r^{\frac{\lambda-n}{p}}$, where $0\leq \lambda\leq n$, then $M_{p,\varphi}$ is the classical Morrey space $M_{p,\lambda}$.

The classical result by Hardy-Littlewood-Sobolev states that if $1<p<q<\i$, then the fractional integral (also known as Riesz potential) $I_{\a}$ ($0 < \a <n$) is bounded from $L_{p}(\Rn)$ to $L_{q}(\Rn)$ if and only if $\a=n\left(\frac{1}{p}-\frac{1}{q}\right)$
The Hardy-Littlewood-Sobolev theorem is an important result in the fractional
integral theory and the potential theory. Later then, this result has been extended from Lebesgue spaces to various function spaces.

Around the 1970's, the Hardy-Littlewood-Sobolev inequality is extended from Lebesgue spaces to Morrey spaces. As stated in \cite{Peetre}, Spanne proved the following result.

\begin{thm} (Spanne, but published by Peetre \cite{Peetre}) \label{Peetre1}
Let $0<\alpha<n$, $1<p<\frac{n}{\alpha}$, $0<\lambda<n-\alpha p$. Moreover, let
$\frac{1}{p}-\frac{1}{q}=\frac{\alpha}{n}$ and $\frac{\lambda}{p}=\frac{\mu}{q}$.
Then the operator $I_{\alpha}$ is bounded from $M_{p,\lambda}(\Rn)$
to $M_{q,\mu}(\Rn)$.
\end{thm}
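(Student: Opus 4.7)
The plan is to reduce the Morrey estimate to the classical Hardy--Littlewood--Sobolev inequality, combined with a direct kernel estimate on the complement of a doubled ball. Fix $x\in\Rn$ and $r>0$ and split $f=f_1+f_2$ with $f_1=f\,\chi_{B(x,2r)}$ and $f_2=f-f_1$. By sublinearity it suffices to control $\|I_\alpha f_1\|_{L_q(B(x,r))}$ and $\|I_\alpha f_2\|_{L_q(B(x,r))}$ separately, and to verify that, after division by $r^{\mu/q}$, each is dominated by $\|f\|_{M_{p,\lambda}}$ uniformly in $x$ and $r$.

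For the local piece, I would apply Hardy--Littlewood--Sobolev with the Sobolev relation $\tfrac1p-\tfrac1q=\tfrac\alpha n$ to obtain
\[
\|I_\alpha f_1\|_{L_q(B(x,r))}\le \|I_\alpha f_1\|_{L_q(\Rn)}\lesssim \|f\|_{L_p(B(x,2r))}\lesssim r^{\lambda/p}\,\|f\|_{M_{p,\lambda}},
\]
where the last step is the definition of the Morrey norm. The Spanne relation $\lambda/p=\mu/q$ then absorbs the factor $r^{\lambda/p}$ against the outer $r^{-\mu/q}$, handling the local contribution.

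The bulk of the work is the tail. For $y\in B(x,r)$ and $z\notin B(x,2r)$ one has $|y-z|\approx|x-z|$, so
\[
|I_\alpha f_2(y)|\lesssim \int_{\Rn\setminus B(x,2r)}\frac{|f(z)|}{|x-z|^{n-\alpha}}\,dz.
\]
I would decompose this complement into dyadic annuli $B(x,2^{k+1}r)\setminus B(x,2^k r)$ for $k\ge 1$, bound $|x-z|^{\alpha-n}$ by $(2^k r)^{\alpha-n}$ on each piece, apply H\"older's inequality, and invoke the Morrey control $\|f\|_{L_p(B(x,\rho))}\le \rho^{\lambda/p}\|f\|_{M_{p,\lambda}}$. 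This produces a geometric series in $2^k$ with exponent $\alpha-(n-\lambda)/p$, which is negative precisely when $\lambda<n-\alpha p$; summing yields the pointwise bound $|I_\alpha f_2(y)|\lesssim r^{\alpha-(n-\lambda)/p}\|f\|_{M_{p,\lambda}}$, and therefore
\[
\|I_\alpha f_2\|_{L_q(B(x,r))}\lesssim r^{n/q+\alpha-(n-\lambda)/p}\,\|f\|_{M_{p,\lambda}}.
\]
A short arithmetic check using $\tfrac1p-\tfrac1q=\tfrac\alpha n$ together with $\tfrac\lambda p=\tfrac\mu q$ shows that the exponent simplifies to $\mu/q$, and multiplying by $r^{-\mu/q}$ delivers the required uniform bound.

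The main obstacle, as so often with Spanne-type theorems, is bookkeeping the three parameter relations so that each is used in exactly the correct place: the Sobolev identity drives the local estimate, the homogeneity $\lambda/p=\mu/q$ matches the rescaling by $r^{-\mu/q}$, and the restriction $\lambda<n-\alpha p$ is invoked only to force convergence of the dyadic tail series. Without this last condition the pointwise tail bound fails, so the argument would have to be modified (for example, replacing $L_q$ on the tail by a weak-type bound), and one naturally arrives at the strict inequality that appears in the hypotheses.
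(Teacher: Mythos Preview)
Your argument is correct and is the standard proof of the Spanne--Peetre theorem: the local/tail decomposition, Hardy--Littlewood--Sobolev for the local piece, the dyadic annulus estimate with H\"older for the tail, and the arithmetic verification that $n/q+\alpha-(n-\lambda)/p=\mu/q$ all go through exactly as you describe.

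Note, however, that the paper does \emph{not} supply its own proof of this statement. Theorem~\ref{Peetre1} is quoted in the introduction purely as classical background, with attribution to Spanne (via Peetre~\cite{Peetre}) and no argument given. The paper's original contributions are the Adams-type results in the Orlicz--Morrey setting (Theorems~\ref{AdGulRszOrlMorNec} and~\ref{AdGulRszOrlMorNecW}), which follow a different scheme---a pointwise Hedberg-type inequality combined with the boundedness of the maximal operator---and do not specialize to the Spanne statement since they keep the same $\varphi$ (equivalently the same $\lambda$) on both sides. So there is nothing in the paper to compare your proof against; what you have written is precisely the classical route.
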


Later on, a stronger result was obtained by Adams \cite{Adams}, and reproved by Chiarenza and Frasca \cite{ChiFra}.

\begin{thm} (Adams \cite{Adams}) \label{Adams1}
Let $0<\alpha<n$, $1<p<\frac{n}{\alpha}$, $0<\lambda<n-\alpha p$ and
$\frac{1}{p}-\frac{1}{q}=\frac{\alpha}{n-\lambda}$.
Then the operator $I_{\alpha}$ is bounded from $M_{p,\lambda}(\Rn)$
to $M_{q,\lambda}(\Rn)$.
\end{thm}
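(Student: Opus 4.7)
My plan is to follow the standard Hedberg--Adams pointwise approach, which reduces the Adams bound to (i) an $L_p$-to-$L_p$ estimate for the Hardy--Littlewood maximal operator on Morrey spaces and (ii) a pointwise comparison between $I_\alpha f$, $Mf$, and the Morrey norm of $f$.

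First, I would fix $x\in\Rn$ and a radius $r>0$, then split the integral defining $I_\alpha f(x)$ into the near part over $B(x,r)$ and the far part over $\Rn\setminus B(x,r)$. The near part is handled by a dyadic annular decomposition: on $B(x,2^{-k}r)\setminus B(x,2^{-k-1}r)$ the kernel $|x-y|^{\alpha-n}$ is comparable to $(2^{-k}r)^{\alpha-n}$, and summing the dyadic averages yields the standard bound $r^{\alpha}\,Mf(x)$. For the far part, I would apply H\"older's inequality on each annulus $B(x,2^{k+1}r)\setminus B(x,2^k r)$ and bound the resulting $L_p$-norm by the Morrey norm of $f$; the geometric series in $k$ converges precisely because $1<p<n/\alpha$ and $0<\lambda<n-\alpha p$, producing
\[
|I_\alpha f(x)|\,\lesssim\, r^{\alpha}\,Mf(x) \;+\; r^{\alpha-(n-\lambda)/p}\,\|f\|_{M_{p,\lambda}}.
\]

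Next I would optimise the free parameter $r$: choosing $r$ so that the two terms on the right are comparable (this uses $Mf(x)<\infty$; on the set where $Mf(x)=0$ we get $f=0$ a.e.\ and there is nothing to prove), one arrives at the Hedberg-type inequality
\[
|I_\alpha f(x)|\,\lesssim\,\bigl(Mf(x)\bigr)^{p/q}\,\|f\|_{M_{p,\lambda}}^{\,1-p/q},
\]
where the exponent $p/q$ emerges from the relation $\tfrac{1}{p}-\tfrac{1}{q}=\tfrac{\alpha}{n-\lambda}$. This is the technical heart of the argument, and is where the sharp Adams exponent relation is really used.

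Finally, I would raise to the $q$-th power, integrate over an arbitrary ball $B(x_0,\rho)$, and take the supremum in $x_0,\rho$ against the Morrey weight $\rho^{-\lambda/q}$. Using $\lambda/q=(\lambda/p)(p/q)$, the estimate telescopes into
\[
\rho^{-\lambda/q}\,\|I_\alpha f\|_{L_q(B(x_0,\rho))}\,\lesssim\,\bigl(\rho^{-\lambda/p}\|Mf\|_{L_p(B(x_0,\rho))}\bigr)^{p/q}\,\|f\|_{M_{p,\lambda}}^{1-p/q},
\]
so that $\|I_\alpha f\|_{M_{q,\lambda}}\lesssim \|Mf\|_{M_{p,\lambda}}^{p/q}\|f\|_{M_{p,\lambda}}^{1-p/q}$. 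Invoking the Chiarenza--Frasca boundedness of $M$ on $M_{p,\lambda}$ (which holds under $1<p<\infty$, $0<\lambda<n$) closes the argument.

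The main obstacle is the pointwise/Hedberg step: one must set up the dyadic splitting carefully so that the far-part sum converges and yields the \emph{correct} power $r^{\alpha-(n-\lambda)/p}$, because the value of this exponent is exactly what forces the Adams exponent relation $\tfrac{1}{p}-\tfrac{1}{q}=\tfrac{\alpha}{n-\lambda}$ upon optimisation. Everything afterwards is routine: H\"older on the ball $B(x_0,\rho)$, the identity $\lambda/q=(\lambda p)/(pq)$, and the known Morrey boundedness of $M$.
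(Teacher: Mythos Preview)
The paper does not actually prove this theorem; it is quoted in the Introduction as a classical result of Adams, with no proof supplied. Your argument is correct and is precisely the standard Hedberg--Adams scheme; moreover it matches, in the Lebesgue/Morrey special case, the method the paper itself uses to establish its generalized Orlicz--Morrey results (Theorem~\ref{AdGulRszOrlMorNec}): split $I_\alpha f$ into a near part controlled by $r^\alpha Mf(x)$ and a far part controlled by the Morrey norm, optimise in $r$ to obtain the pointwise inequality $|I_\alpha f(x)|\lesssim (Mf(x))^{p/q}\|f\|_{M_{p,\lambda}}^{1-p/q}$, then invoke the Morrey boundedness of $M$.
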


For the boundedness of $I_{\alpha}$ on generalized Morrey spaces see \cite{GulJIA,GULAKShIEOT2012,Nakai94,SugTan} and references therein. The fractional integral in Orlicz spaces was studied in \cite{Cianchi1,Nakai,O'Neil,Torch1}. For more details we refer to survey paper \cite{Nakaisurv}.

Commutators of classical operators of harmonic analysis play an important  role in various topics of analysis and PDE, see for instance \cite{Cald1,Chanillo,CRW,CLMS},  where in particular in \cite{Chanillo} it was shown that the commutator $[b,I_{\a}]$ is bounded from $L^p(\Rn)$ to $L^q(\Rn)$ for $1< p < \frac{n}{\a}$, $\frac{1}{q}=\frac{1}{p}-\frac{\a}{n}$ and $b\in BMO(\Rn)$.

In order to extend the traditional Euclidean space to build a general underlying structure for the real harmonic analysis, the notion of spaces of
homogeneous type was introduced by Coifman and Weiss \cite{CoifWeiss}.

Let $X=(X,d,\mu)$ be a space of homogeneous type, i.e. $X$ is a topological space endowed with a quasi-distance $d$ and a positive measure $\mu$ such that
$$d(x,y)\geq0 \text{   and  } d(x,y)=0 \text{  if and only if  } x=y,$$
$$d(x,y)=d(y,x),$$
\begin{equation}\label{triinq}
d(x,y)\leq K_1(d(x,z)+d(z,y)),
\end{equation}
the balls $B(x,r)=\{y\in X: d(x,y)<r\},\,r>0$, form a basis of neighborhoods of the point $x$, $\mu$ is defined on a $\sigma$-algebra of subsets of $X$ which contains the balls, and
\begin{equation}\label{doubl1}
0<\mu(B(x,2r))\leq K_2\,\mu(B(x,r))<\i,
\end{equation}
where $K_i\geq1\,(i=1,2)$ are constants independent of $x, y, z \in X$ and $r > 0$. As usual, the dilation of a ball $B = B(x,r)$ will be denoted by $\lambda B = B(x, \lambda r)$ for every $\lambda > 0$.

Note that \eqref{doubl1} implies that
\begin{equation}\label{doubl2}
\mu(\lambda B)\leq C(\mu,\lambda)\,\mu(B),
\end{equation}
for all $\lambda\geq 1$.

In the sequel, we always assume that
$\mu(X)=\i$, the space of compactly supported continuous function is dense in $L_1(X,\mu)$ and that $X$ is $Q$-homogeneous $(Q > 0)$, i.e.
\begin{equation}\label{Qhomogeneous}
K_3^{-1}r^Q\leq \mu(B(x,r)) \leq K_3 r^Q,
\end{equation}
where $K_3\geq 1$ is a constant independent of $x$ and $r$. The $n$-dimensional Euclidean space $\Rn$ is $n$-homogeneous.

In proving the boundedness of the fractional integral operators on various spaces, some researchers find that the translation invariance and the doubling properties of the Lebesgue measure play an important role. This is also true in studying other operators such as maximal operators and various types of singular integral operators. Thus, inspired by this fact, they studied the operators in the homogeneous setting. We refer to \cite{DengHan,GenGogKokKr,GulMusHom,Nakai-Hom,Nakai-HomFS} and references therein.

The authors introduced generalized Orlicz-Morrey spaces in \cite{DGS} to investigate the boundedness of maximal and singular operators. Generalized Orlicz-Morrey spaces unify Orlicz and generalized Morrey spaces. Also, in \cite{DGSJIA} the authors extended the Adams type boundedness of Riesz potential and its commutators to the generalized Orlicz-Morrey spaces on the $n$-dimensional Euclidean space $\Rn$. Moreover, the authors find criteria for the boundedness of Riesz potential and its commutators on Orlicz spaces on the $n$-dimensional Euclidean space $\Rn$ in \cite{GulDerHas}.
The purpose of this paper is to extend these results to the spaces of homogeneous type.

Before describing the characterization for fractional integral and its commutators in Orlicz and generalized Orlicz-Morrey spaces on spaces of
homogeneous type, we give several examples of spaces of homogeneous type (\cite{CoifWeiss, CoifWeiss2,DengHan,GenGogKokKr}).

$(1)~~~ X=\Rn, ~ \rho(x,y)=|x-y|=\Big(\sum\limits_{j=1}^n (x_j-y_j)^2 \Big)^{\frac{1}{2}}$ and $\mu$ equals Lebesgue
measure.

$(2)~~~ X=\Rn, ~ \rho(x,y)=\sum\limits_{j=1}^n (x_j-y_j)^{\alpha_j}$, where $\alpha_1,\alpha_2,\ldots,\alpha_n$ are positive
numbers, not necessarily equal, and ì equals Lebesgue measure (this distance is called nonisotropic).

$(3)~~~ X=[0,1), ~ \rho(x,y)$ is the length of the smallest dyadic interval containing $x$ and $y$, and $\mu$ is Lebesgue measure.

$(4)~~~$ Any $C^{\infty}$ compact Riemannian manifold with the Riemannian metric and volume.

$(5)~~~$ Let $G$ be a nilpotent Lie group with a left-invariant Riemannian metric and $\mu$ is the induced measure.

$(6)~~~$ When $X$ is the boundary of a smooth and bounded pseudo-convex domain in $\mathbb{C}^n$ one can introduce a
nonisotropic quasi-distance that is related to the complex structure in such a way that we obtain a space of homogeneous
type by using Lebesgue surface measure. For example, if $X$ is the surface of the unit sphere
$$
\sigma_{2n-1}=\Big\{ z \in \mathbb{C}^n : z \cdot \overline{z} = \sum\limits_{j=1}^n z_j \overline{z_j} = 1 \Big\},
$$
the nonisotropic distance is given by $d(z,w)=|1-z \cdot \overline{w}|^{\frac{1}{2}}$.

By $A\lesssim B$ we mean that $A\le CB$ with some positive constant $C$ independent
of appropriate quantities. If $A\lesssim B$ and $B\lesssim A$, we write $A\thickapprox B$ and say that $A$
and $B$ are equivalent.

\section{Preliminaries}

The Morrey spaces and weak Morrey spaces on spaces of homogeneous type are defined as follows.
\begin{defn} Let $1\le p < \infty$ and $0 \le \lambda \le  Q,$
\begin{equation*}
M_{p,\lambda}(X)=\left\{f\in L_{p}^{\rm loc}(X): \left\| f\right\|_{M_{p,\lambda}}: = \sup_{x\in X, \; r>0 } r^{-\frac{\lambda}{p}} \|f\|_{L_{p}(B(x,r))}< \infty\right\},
\end{equation*}
\begin{equation*}
WM_{p,\lambda}(X)=\left\{f\in L_{p}^{\rm loc}(X): \left\| f\right\|_{WM_{p,\lambda}}: = \sup_{x\in X, \; r>0 } r^{-\frac{\lambda}{p}} \|f\|_{WL_{p}(B(x,r))}< \infty\right\},
\end{equation*}
where
$$\|f\|_{L_{p}(B(x,r))}=\left(\int_{B(x,r)}|f(y)|^p d\mu(y)\right)^{\frac{1}{p}}$$
and $WL_{p}(B(x,r))$ denotes the weak $L_{p}$-space of measurable functions $f$ for which
$$ \left\| f\right\|_{WL_{p}(B(x,r))}=\sup_{\tau>0}\tau\mu\big(\{y\in B(x,r): |f(y)|>\tau\}\big)^{\frac{1}{p}}.$$
\end{defn}

We recall the definition of Young functions.

\begin{defn}\label{def2} A function $\Phi : [0,\infty) \rightarrow [0,\infty]$ is called a Young function if $\Phi$ is convex, left-continuous, $\lim\limits_{r\rightarrow +0} \Phi(r) = \Phi(0) = 0$ and $\lim\limits_{r\rightarrow \infty} \Phi(r) = \infty$.
\end{defn}

From the convexity and $\Phi(0) = 0$ it follows that any Young function is increasing.
If there exists $s \in (0,\infty)$ such that $\Phi(s) = \infty$, then $\Phi(r) = \infty$ for $r \geq s$.

Let $\mathcal{Y}$ be the set of all Young functions $\Phi$ such that
\begin{equation*}
0<\Phi(r)<\infty\qquad \text{for} \qquad 0<r<\infty
\end{equation*}
If $\Phi \in \mathcal{Y}$, then $\Phi$ is absolutely continuous on every closed interval in $[0,\infty)$
and bijective from $[0,\infty)$ to itself.

The Orlicz spaces and weak Orlicz spaces on spaces of homogeneous type are defined as follows.
\begin{defn} For a Young function $\Phi$,
$$
L_{\Phi}(X)=\left\{f\in L_1^{\rm loc}(X): \int_{X}\Phi(\epsilon|f(x)|)d\mu(x)<\infty
 \text{ for some $\epsilon>0$  }\right\},
$$
$$
\|f\|_{L_{\Phi}}=\inf\left\{\lambda>0:\int_{X}\Phi\Big(\frac{|f(x)|}{\lambda}\Big)d\mu(x)\leq 1\right\},
$$
$$
WL_{\Phi}(X):=\left\{f\in L_{\rm loc}^{1}(X):\sup_{r>0}\Phi(r)m\Big(r,\epsilon f\Big)<\infty  \text{ for some $\epsilon>0$  }\right\},
$$
$$
\Vert f\Vert_{WL_{\Phi}}=\inf\left\{\lambda>0\ :\ \sup_{t>0}\Phi(\frac{t}{\lambda})d_{f}(t)\ \leq 1\right\},
$$
where $d_{f}(t)=|\{x\in\Rn: |f(x)|>t\}|$.
\end{defn}
We note that,
\begin{equation}\label{normofcharac}
\|\chi_{_B}\|_{WL_{\Phi}} = \|\chi_{_B}\|_{L_{\Phi}} = \frac{1}{\Phi^{-1}\left(\mu(B)^{-1}\right)},
\end{equation}
where $B$ is a $\mu$-measurable set in $X$ with $\mu(B)<\i$ and $\chi_{_B}$ is the characteristic function of $B$, that
\begin{equation}\label{orlpr}
\int _{X}\Phi\Big(\frac{|f(x)|}{\|f\|_{L_{\Phi}}}\Big)d\mu(x)\leq 1
\end{equation}
and that
\begin{equation}\label{worlpr}
\sup_{t>0}\Phi(\frac{t}{\Vert f\Vert_{WL_{\Phi}}})d_{f}(t)\ \leq 1.
\end{equation}


For a Young function $\Phi$ and  $0 \leq s \leq \infty$, let
$$\Phi^{-1}(s)=\inf\{r\geq 0: \Phi(r)>s\}\qquad (\inf\emptyset=\infty).$$
If $\Phi \in \mathcal{Y}$, then $\Phi^{-1}$ is the usual inverse function of $\Phi$. We note that
\begin{equation*}
\Phi(\Phi^{-1}(r))\leq r \leq \Phi^{-1}(\Phi(r)) \quad \text{ for } 0\leq r<\infty.
\end{equation*}

A Young function $\Phi$ is said to satisfy the $\Delta_2$-condition, denoted by  $\Phi \in \Delta_2$, if
$$
\Phi(2r)\le k\Phi(r) \text{    for } r>0
$$
for some $k>1$. If $\Phi \in \Delta_2$, then $\Phi \in \mathcal{Y}$.

A Young function $\Phi$ is said to satisfy the $\nabla_2$-condition, denoted also by  $\Phi \in \nabla_2$, if
$$\Phi(r)\leq \frac{1}{2k}\Phi(kr),\qquad r\geq 0,$$
for some $k>1$.

For a Young function $\Phi$, the complementary function $\widetilde{\Phi}(r)$ is defined by
\begin{equation*}
\widetilde{\Phi}(r)=\left\{
\begin{array}{ccc}
\sup\{rs-\Phi(s): s\in [0,\infty)\}
& , & r\in [0,\infty), \\
\infty&,& r=\infty.
\end{array}
\right.
\end{equation*}
The complementary function  $\widetilde{\Phi}$ is also a Young function and $\widetilde{\widetilde{\Phi}}=\Phi$.
If $\Phi(r)=r$, then $\widetilde{\Phi}(r)=0$ for $0\leq r \leq 1$ and $\widetilde{\Phi}(r)=\infty$
for $r>1$. If $1 < p < \infty$, $1/p+1/p^\prime= 1$ and $\Phi(r) =
r^p/p$, then $\widetilde{\Phi}(r) = r^{p^\prime}/p^\prime$. If $\Phi(r) = e^r-r-1$, then $\widetilde{\Phi}(r) = (1+r) \log(1+r)-r$.
Note that $\Phi \in \nabla_2$ if and only if $\widetilde{\Phi} \in \Delta_2$. It is known that
\begin{equation}\label{2.3}
r\leq \Phi^{-1}(r)\widetilde{\Phi}^{-1}(r)\leq 2r \qquad \text{for } r\geq 0.
\end{equation}

Note that by the convexity of $\Phi$ and concavity of $\Phi^{-1}$ we have the following properties
\begin{equation}\label{convconc}
\left\{
\begin{array}{ccc}
\Phi(\alpha t)\leq \alpha \Phi(t),
& \text{  if } &0\le\a\le1 \\
\Phi(\a t)\geq \a \Phi(t),&\text{  if }& \a>1
\end{array}
\right.
\text{ and }
\left\{
\begin{array}{ccc}
\Phi^{-1}(\alpha t)\geq \alpha \Phi^{-1}(t),
& \text{  if } &0\le\a\le1 \\
\Phi^{-1}(\a t)\leq \a \Phi^{-1}(t),&\text{  if }& \a>1.
\end{array}
\right.
\end{equation}

The following analogue of the H\"older inequality is known,
\begin{equation}\label{HoSHTOrl}
\int_X|f(x)g(x)|d\mu(x) \leq 2 \|f\|_{L_{\Phi}} \|g\|_{L_{\widetilde{\Phi}}}.
\end{equation}

In the next sections where we prove our main estimates, we use the following lemma, which follows from H\"older inequality, \eqref{normofcharac} and \eqref{2.3}.
\begin{lem}\label{lemHold}
For a Young function $\Phi$ and $B=B(x,r)$, the following inequality is valid
$$\|f\|_{L_{1}(B)} \leq 2 \mu(B) \Phi^{-1}\left(\mu(B)^{-1}\right) \|f\|_{L_{\Phi}(B)},$$
where $\|f\|_{L_{\Phi}(B)}=\|f\chi_{_B}\|_{L_{\Phi}}$.
\end{lem}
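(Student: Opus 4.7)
The plan is to follow the route hinted at in the statement: apply the H\"older inequality \eqref{HoSHTOrl} to the pair $(f\chi_B,\chi_B)$, then identify the Orlicz norm of $\chi_B$ through \eqref{normofcharac} taken with respect to the complementary function $\widetilde{\Phi}$, and finally convert the $\widetilde{\Phi}^{-1}$ that appears back into $\Phi^{-1}$ by invoking the product inequality \eqref{2.3}.

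Concretely, I would first write
\[
\|f\|_{L_1(B)}=\int_X |f(x)\chi_B(x)|\,d\mu(x)
\]
and apply \eqref{HoSHTOrl} with $g=\chi_B$ to obtain
\[
\|f\|_{L_1(B)}\le 2\,\|f\|_{L_\Phi(B)}\,\|\chi_B\|_{L_{\widetilde\Phi}}.
\]
Next, \eqref{normofcharac} applied to $\widetilde\Phi$ gives $\|\chi_B\|_{L_{\widetilde\Phi}}=1/\widetilde\Phi^{-1}(\mu(B)^{-1})$. Finally \eqref{2.3}, namely $r\le \Phi^{-1}(r)\widetilde\Phi^{-1}(r)$, applied at $r=\mu(B)^{-1}$, yields
\[
\frac{1}{\widetilde\Phi^{-1}(\mu(B)^{-1})}\le \mu(B)\,\Phi^{-1}(\mu(B)^{-1}),
\]
and substituting this back produces the asserted bound.

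There is no real obstacle here: the estimate is a direct assembly of three ingredients already stated in the Preliminaries, and the only small point to be careful about is that \eqref{normofcharac} is stated for $\Phi$ but is used for the complementary Young function $\widetilde\Phi$, which is legitimate because $\widetilde\Phi$ is itself a Young function.
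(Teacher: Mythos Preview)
Your proof is correct and is exactly the assembly of H\"older's inequality \eqref{HoSHTOrl}, the norm identity \eqref{normofcharac} (applied to $\widetilde\Phi$), and the product bound \eqref{2.3} that the paper itself indicates just before stating the lemma. There is nothing to add.
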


\section{Generalized Orlicz-Morrey spaces}

The generalized Orlicz-Morrey spaces and the weak generalized Orlicz-Morrey spaces on spaces of homogeneous type are defined as follows.
\begin{defn}
Let $(X,d,\mu)$ be $Q-$homogeneous, $\varphi(r)$ be a positive measurable function on $(0,\infty)$ and $\Phi$ any Young function.
We denote by $M_{\Phi,\varphi}(X)$ the generalized Orlicz-Morrey space, the space of all
functions $f\in L_{\Phi}^{\rm loc}(X)$ with finite quasinorm
$$
\|f\|_{M_{\Phi,\varphi}} \equiv \|f\|_{M_{\Phi,\varphi}(\Rn)} = \sup\limits_{x\in X, r>0}
\varphi(r)^{-1} \Phi^{-1}(\mu(B(x,r))^{-1}) \|f\|_{L_{\Phi}(B(x,r))},
$$
where $L_{\Phi}^{\rm loc}(X)$ is defined as the set of all functions $f$ such that  $f\chi_{_B}\in L_{\Phi}(X)$ for all balls $B \subset X$.

Also by $WM_{\Phi,\varphi}(X)$ we denote the weak generalized Orlicz-Morrey space of all functions $f\in WL_{\Phi}^{\rm loc}(X)$ for which
$$
\|f\|_{WM_{\Phi,\varphi}} \equiv \|f\|_{WM_{\Phi,\varphi}(\Rn)} = \sup\limits_{x\in X, r>0} \varphi(r)^{-1} \Phi^{-1}(\mu(B(x,r))^{-1}) \|f\|_{WL_{\Phi}(B(x,r))} < \infty,
$$
where $WL_{\Phi}^{\rm loc}(X)$ is defined as the set of all functions $f$ such that  $f\chi_{_B}\in WL_{\Phi}(X)$ for all balls $B \subset X$.
\end{defn}

\begin{rem}
Thanks to \eqref{Qhomogeneous} and \eqref{convconc} we have
\begin{equation*}
\Phi^{-1}(\mu(B(x,r))^{-1})\approx \Phi^{-1}(r^{-Q}).
\end{equation*}
Therefore we can also write
$$
\|f\|_{M_{\Phi,\varphi}} \equiv \sup\limits_{x\in X, r>0}
\varphi(r)^{-1} \Phi^{-1}(r^{-Q}) \|f\|_{L_{\Phi}(B(x,r))},
$$
and
$$
\|f\|_{WM_{\Phi,\varphi}} \equiv \sup\limits_{x\in X, r>0} \varphi(r)^{-1} \Phi^{-1}(r^{-Q}) \|f\|_{WL_{\Phi}(B(x,r))},
$$
respectively.
\end{rem}
According to this definition, we recover the generalized Morrey space $M_{p,\varphi}(X)$ and weak generalized Morrey space $WM_{p,\varphi}(X)$ under the choice $\Phi(r)=r^{p},\,1\le p<\i$. If $\Phi(r)=r^{p},\,1\le p<\i$ and $\varphi(r)=r^{\frac{\lambda-Q}{p}},\,0\le \lambda \le Q$, then $M_{\Phi,\varphi}(X)$ and $WM_{\Phi,\varphi}(X)$ coincide with $M_{p,\lambda}(X)$ and $WM_{p,\lambda}(X)$, respectively and if $\varphi(r)=\Phi^{-1}(r^{-Q})$, then $M_{\Phi,\varphi}(X)$ and $WM_{\Phi,\varphi}(X)$ coincide with the $L_{\Phi}(X)$ and $WL_{\Phi}(X)$, respectively.

A function $\varphi:(0,\infty) \to (0,\infty)$ is said to be almost increasing (resp.
almost decreasing) if there exists a constant $C > 0$ such that
$$
\varphi(r)\leq C \varphi(s)\qquad (\text{resp. }\varphi(r)\geq C \varphi(s))\quad \text{for  } r\leq s.
$$
For a Young function $\Phi$, we denote by ${\mathcal{G}}_{\Phi}$ the set of all almost decreasing functions $\varphi:(0,\infty) \to (0,\infty)$
such that $t\in(0,\infty) \mapsto \frac{\varphi(t)}{\Phi^{-1}(t^{-Q})}$ is almost increasing.

\begin{lem}\label{charOrlMor}
Let $B_0:=B(x_0,r_0)$. If $\varphi\in{\mathcal{G}}_{\Phi}$, then there exist $C>0$ such that
$$
\frac{1}{\varphi(r_0)}\leq \|\chi_{B_0}\|_{M_{\Phi,\varphi}}\leq \frac{C}{\varphi(r_0)}.
$$
\end{lem}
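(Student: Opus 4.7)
The plan is to bound
$$
\|\chi_{B_0}\|_{M_{\Phi,\varphi}} = \sup_{x\in X,\, r>0}\varphi(r)^{-1}\,\Phi^{-1}(\mu(B(x,r))^{-1})\,\|\chi_{B_0}\|_{L_{\Phi}(B(x,r))}
$$
from below and above using \eqref{normofcharac} together with the two monotonicity properties encoded in the definition of $\mathcal{G}_{\Phi}$. The key observation is that $\|\chi_{B_0}\|_{L_{\Phi}(B(x,r))} = \|\chi_{B_0\cap B(x,r)}\|_{L_{\Phi}} = 1/\Phi^{-1}(\mu(B_0\cap B(x,r))^{-1})$ by \eqref{normofcharac} (with the usual convention that this is zero when the intersection is empty).

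For the lower bound I would simply restrict the supremum to the single test pair $(x,r)=(x_0,r_0)$. Then $B_0\cap B(x_0,r_0)=B_0$, so by \eqref{normofcharac} the factor $\Phi^{-1}(\mu(B_0)^{-1})\,\|\chi_{B_0}\|_{L_{\Phi}(B_0)}$ equals $1$, giving $\|\chi_{B_0}\|_{M_{\Phi,\varphi}}\ge \varphi(r_0)^{-1}$.

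For the upper bound I would split according to the size of $r$ relative to $r_0$. When $r\le r_0$, I use that $\mu(B_0\cap B(x,r))\le \mu(B(x,r))$, hence $\Phi^{-1}(\mu(B(x,r))^{-1})/\Phi^{-1}(\mu(B_0\cap B(x,r))^{-1})\le 1$, and the almost-decreasing property of $\varphi$ yields $\varphi(r)^{-1}\lesssim \varphi(r_0)^{-1}$. When $r>r_0$, I use $\mu(B_0\cap B(x,r))\le \mu(B_0)$, so that
$$
\frac{\Phi^{-1}(\mu(B(x,r))^{-1})}{\Phi^{-1}(\mu(B_0\cap B(x,r))^{-1})}\le \frac{\Phi^{-1}(\mu(B(x,r))^{-1})}{\Phi^{-1}(\mu(B_0)^{-1})}\approx \frac{\Phi^{-1}(r^{-Q})}{\Phi^{-1}(r_0^{-Q})},
$$
where the last equivalence follows from \eqref{Qhomogeneous} and \eqref{convconc}. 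Then the almost-increasing property of $t\mapsto \varphi(t)/\Phi^{-1}(t^{-Q})$ applied at $r_0\le r$ gives $\Phi^{-1}(r^{-Q})/\varphi(r)\lesssim \Phi^{-1}(r_0^{-Q})/\varphi(r_0)$, which is exactly what is needed to conclude that the supremand is $\lesssim \varphi(r_0)^{-1}$.

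The only mild subtlety is keeping the geometric comparison between $\mu(B(x,r))$, $\mu(B_0)$ and $\mu(B_0\cap B(x,r))$ correctly oriented so that the monotonicities of $\varphi$ and $\varphi/\Phi^{-1}(\cdot^{-Q})$ pull in the right direction; once the two cases $r\le r_0$ and $r>r_0$ are set up, each reduces to a one-line application of one of the two monotonicities in the definition of $\mathcal{G}_{\Phi}$.
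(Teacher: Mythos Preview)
Your proposal is correct and follows essentially the same approach as the paper: test at $(x_0,r_0)$ for the lower bound, and for the upper bound split into $r\le r_0$ and $r>r_0$, using in each case the appropriate monotonicity from the definition of $\mathcal{G}_{\Phi}$ together with $\mu(B_0\cap B(x,r))\le \min\{\mu(B(x,r)),\mu(B_0)\}$. The paper's write-up is slightly terser (it hides the intersection estimate inside the inequality $\Phi^{-1}(\mu(B)^{-1})\|\chi_{B_0}\|_{L_\Phi(B)}\le 1$), but the argument is the same.
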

\begin{proof}
Let $B=B(x,r)$ denote an arbitrary ball in $X$. By the definition and \eqref{normofcharac}, it is easy to see that
\begin{align*}
\|\chi_{B_0}\|_{M_{\Phi,\varphi}}&=\sup\limits_{x\in X, r>0}\varphi(r)^{-1}\Phi^{-1}(\mu(B)^{-1})\frac{1}{\Phi^{-1}(\mu(B\cap B_0)^{-1})}
\end{align*}
\begin{align*}
&\geq \varphi(r_0)^{-1}\Phi^{-1}(\mu(B_0)^{-1})\frac{1}{\Phi^{-1}(\mu(B_0\cap B_0)^{-1})} = \frac{1}{\varphi(r_0)}.
\end{align*}

Now if $r\leq r_0$, then $\varphi(r_0)\leq C\varphi(r)$ and
\begin{align*}
\varphi(r)^{-1}\Phi^{-1}(\mu(B)^{-1})\|\chi_{B_0}\|_{L_{\Phi}(B)}\leq\frac{1}{\varphi(r)}\leq \frac{C}{\varphi(r_0)}.
\end{align*}

On the other hand if $r\geq r_0$, then $\frac{\varphi(r_0)}{\Phi^{-1}(\mu(B_0)^{-1})}\leq C\frac{\varphi(r)}{\Phi^{-1}(\mu(B)^{-1})}$ and
\begin{align*}
\varphi(r)^{-1}\Phi^{-1}(\mu(B)^{-1})\|\chi_{B_0}\|_{L_{\Phi}(B)}\leq \frac{C}{\varphi(r_0)}.
\end{align*}
This completes the proof.
\end{proof}

\section{Maximal operator and its commutators in generalized Orlicz-Morrey spaces}

Let $Mf(x)$ be the maximal function, i.e.
$$
Mf(x)=\sup\limits_{r>0}\frac{1}{\mu(B(x,r))} \int_{B(x,r)}|f(y)|d\mu(y).
$$

The known boundedness statement for $M$ in Orlicz spaces on spaces of homogeneous
type runs as follows.

\begin{thm}\cite{GenGogKokKr}\label{maxorl}
Let $\Phi \in \mathcal{Y}$. Then $M$ is bounded from $L_{\Phi}(X)$ to $WL_{\Phi}(X)$. Moreover, if $\Phi\in\nabla_2$, then $M$ is bounded from $L_{\Phi}(X)$ to $L_{\Phi}(X)$.
\end{thm}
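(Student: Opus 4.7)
My plan is to reduce the theorem to the classical weak-type $(1,1)$ inequality for $M$ on $(X,d,\mu)$, namely
$$
\mu(\{x\in X: Mf(x)>t\})\le \frac{C_0}{t}\int_X |f(y)|\,d\mu(y), \qquad t>0,
$$
which is a standard consequence of a Vitali-type covering argument based on the doubling condition \eqref{doubl1}; I would invoke it without reproving it.

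For the weak bound $M:L_\Phi(X)\to WL_\Phi(X)$, by homogeneity I would normalize $\|f\|_{L_\Phi}=1$ so that $\int_X\Phi(|f|)\,d\mu\le 1$ by \eqref{orlpr}. For each fixed $t>0$ I split $f=f_1^{(t)}+f_2^{(t)}$ with $f_1^{(t)}=f\chi_{\{|f|>t/2\}}$; since $\|f_2^{(t)}\|_\infty\le t/2$ forces $Mf_2^{(t)}\le t/2$, one has $\{Mf>t\}\subset\{Mf_1^{(t)}>t/2\}$. Applying weak $(1,1)$ to $f_1^{(t)}$ and then using the pointwise bound $|f(y)|\le \frac{t/2}{\Phi(t/2)}\,\Phi(|f(y)|)$ on $\{|f|>t/2\}$ (a consequence of monotonicity of $r\mapsto\Phi(r)/r$, which itself follows from convexity and $\Phi(0)=0$), I obtain
$$
\mu(\{Mf>t\})\le \frac{C}{\Phi(t/2)}\int_X \Phi(|f|)\,d\mu\le \frac{C}{\Phi(t/2)}.
$$
The convexity bound $\Phi(t/\lambda)\le (2/\lambda)\Phi(t/2)$ for $\lambda\ge 2$, which is the first inequality in \eqref{convconc}, then yields $\sup_{t>0}\Phi(t/\lambda)\mu(\{Mf>t\})\le 1$ upon choosing $\lambda=2C$. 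By \eqref{worlpr} this gives $\|Mf\|_{WL_\Phi}\le 2C\|f\|_{L_\Phi}$.

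For the strong bound when $\Phi\in\nabla_2$, I would combine the layer-cake identity
$$
\int_X \Phi(Mf)\,d\mu=\int_0^\infty \Phi'(t)\,\mu(\{Mf>t\})\,dt
$$
with the distribution-function estimate from the previous step, and then invoke Fubini to rewrite the right-hand side as $\int_X |f(x)|\,\Psi(|f(x)|)\,d\mu(x)$, where $\Psi(s):=\int_0^{2s}\Phi'(t)/t\,dt$. The essential analytic step is the Hardy-type inequality $\Psi(s)\lesssim \Phi(s)/s$; this is precisely where the $\nabla_2$ assumption enters. Iterating $\Phi(r)\le (2k)^{-1}\Phi(kr)$ produces the geometric decay $\Phi(k^{-j}s)\le (2k)^{-j}\Phi(s)$, and decomposing $(0,2s]$ along the geometric sequence $\{k^{-j}\cdot 2s\}_{j\ge 0}$ allows summation of $\Phi'(t)/t$ against this geometric bound. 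The conclusion is $\int_X\Phi(Mf)\,d\mu\lesssim \int_X \Phi(|f|)\,d\mu$, from which $\|Mf\|_{L_\Phi}\lesssim \|f\|_{L_\Phi}$ follows by a standard Luxemburg-norm scaling argument.

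The principal obstacle is the Hardy-type inequality for $\Phi\in\nabla_2$: since $\Phi'$ is only defined almost everywhere and only monotone, the estimate cannot be done by smooth manipulation and requires the dyadic summation outlined above. An arguably cleaner alternative is interpolation: $\nabla_2$ implies the existence of some $p>1$ for which $\Phi(r)/r^p$ is almost increasing, so $L_\Phi(X)$ is sandwiched (in the Marcinkiewicz sense) between $L_1(X)$ and $L_p(X)$; interpolating the weak $(1,1)$ bound with the strong $(p,p)$ bound of $M$ (which itself follows from weak $(1,1)$ and the trivial $L_\infty$ bound) then delivers the strong boundedness on $L_\Phi(X)$ directly.
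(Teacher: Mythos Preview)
The paper does not prove this theorem at all: it is quoted verbatim from the monograph \cite{GenGogKokKr} and used as a black box in the subsequent local estimates (Lemma~\ref{lem4.2.}) and in Lemma~\ref{keylem}. There is therefore no ``paper's own proof'' to compare against.

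That said, your proposal is a correct and standard argument. The weak-type part is the usual Calder\'on--Zygmund level-set splitting combined with the weak $(1,1)$ bound for $M$ on doubling spaces; the passage from the distributional estimate $\mu(\{Mf>t\})\le C/\Phi(t/2)$ to the Luxemburg-type bound via \eqref{convconc} is clean. For the strong bound under $\nabla_2$, both routes you sketch are legitimate: the layer-cake/Fubini argument reduces to the Hardy-type estimate $\int_0^{2s}\Phi'(t)t^{-1}\,dt\lesssim \Phi(2s)/s$, and your geometric decomposition along $\{k^{-j}\cdot 2s\}$ together with the iterated $\nabla_2$ bound $\Phi(k^{-j}r)\le(2k)^{-j}\Phi(r)$ is exactly what is needed. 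One small point worth making explicit: the modular inequality you obtain is $\int_X\Phi(Mf)\,d\mu\lesssim\int_X\Phi(2|f|)\,d\mu$, not $\int_X\Phi(|f|)\,d\mu$; since $\nabla_2$ alone does not give $\Delta_2$, you cannot absorb the factor $2$ into the modular, but the passage to the Luxemburg norm via scaling and convexity handles this without difficulty. The interpolation alternative (using that $\nabla_2$ forces a lower type $p>1$) is equally valid and perhaps more in the spirit of \cite{GenGogKokKr}.
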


\begin{lem}\label{lem4.2.}
Let $\Phi \in \mathcal{Y}$, $f\in L_\Phi^{\rm loc}(X)$ and  $B=B(x,r)$. Then
\begin{equation}\label{4.5}
\|M f\|_{L_{\Phi}(B)} \lesssim  \frac{1}{\Phi^{-1}\big(r^{-Q}\big)} \, \sup_{t>r} \Phi^{-1}\big(t^{-Q}\big) \, \|f\|_{L_{\Phi}(B(x,t))}
\end{equation}
for any Young function  $\Phi \in \nabla_2$
and
\begin{align}\label{eq100WX}
\|M f\|_{WL_{\Phi}(B)} \lesssim \frac{1}{\Phi^{-1}\big(r^{-Q}\big)} \, \sup_{t>r} \Phi^{-1}\big(t^{-Q}\big) \, \|f\|_{L_{\Phi}(B(x,t))}
\end{align}
for any Young function $\Phi$.
\end{lem}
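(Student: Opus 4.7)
The plan is the standard splitting trick. Fix $B=B(x,r)$ and decompose $f=f_1+f_2$ with $f_1=f\chi_{2K_1B}$ and $f_2=f-f_1$, so that by sublinearity $Mf\le Mf_1+Mf_2$. I will treat the two pieces separately, handling the strong and weak inequalities in parallel.

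For the local piece $Mf_1$, I invoke Theorem \ref{maxorl}. When $\Phi\in\nabla_2$ this gives $\|Mf_1\|_{L_\Phi(B)}\le\|Mf_1\|_{L_\Phi(X)}\lesssim\|f\|_{L_\Phi(B(x,2K_1 r))}$, while for a general Young function $\Phi$ it gives the corresponding weak bound $\|Mf_1\|_{WL_\Phi(B)}\lesssim\|f\|_{L_\Phi(B(x,2K_1 r))}$. I then write
\begin{equation*}
\|f\|_{L_\Phi(B(x,2K_1 r))}=\frac{1}{\Phi^{-1}\bigl((2K_1 r)^{-Q}\bigr)}\cdot\Phi^{-1}\bigl((2K_1 r)^{-Q}\bigr)\|f\|_{L_\Phi(B(x,2K_1 r))},
\end{equation*}
absorb the second factor into $\sup_{t>r}\Phi^{-1}(t^{-Q})\|f\|_{L_\Phi(B(x,t))}$, and use the concavity-type inequality \eqref{convconc} together with $(2K_1)^{-Q}\le 1$ to pass from $\Phi^{-1}((2K_1 r)^{-Q})^{-1}$ to $\Phi^{-1}(r^{-Q})^{-1}$ at the cost of a harmless constant. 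This yields the desired bound for $Mf_1$ in both settings.

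For the far piece $Mf_2$, I establish a pointwise estimate on $B$. Let $y\in B$ and $s>0$ be such that $B(y,s)\cap\operatorname{supp}f_2\neq\emptyset$; picking $z$ in the intersection, the quasi-triangle inequality \eqref{triinq} forces $2K_1 r\le d(x,z)\le K_1(r+s)$, so $s\ge r$. For any $w\in B(y,s)$ we similarly get $d(x,w)\le K_1(r+s)\le 2K_1 s$, hence $B(y,s)\subset B(x,2K_1 s)$. Applying Lemma \ref{lemHold} on $B(x,2K_1 s)$ and comparing $\mu(B(x,2K_1 s))$ with $\mu(B(y,s))$ via \eqref{Qhomogeneous}, I obtain
\begin{equation*}
\frac{1}{\mu(B(y,s))}\int_{B(y,s)}|f_2(w)|\,d\mu(w)\lesssim\Phi^{-1}\bigl((2K_1 s)^{-Q}\bigr)\,\|f\|_{L_\Phi(B(x,2K_1 s))}.
\end{equation*}
Taking the supremum over $s\ge r$ and substituting $t=2K_1 s$ gives, uniformly in $y\in B$,
\begin{equation*}
Mf_2(y)\lesssim\sup_{t>r}\Phi^{-1}\bigl(t^{-Q}\bigr)\,\|f\|_{L_\Phi(B(x,t))}=:\mathcal A.
\end{equation*}

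To conclude, since $Mf_2\lesssim\mathcal A$ pointwise on $B$, both norms are controlled by $\mathcal A\,\|\chi_B\|_{L_\Phi}$ (respectively $\mathcal A\,\|\chi_B\|_{WL_\Phi}$). By \eqref{normofcharac} and the equivalence $\Phi^{-1}(\mu(B)^{-1})\approx\Phi^{-1}(r^{-Q})$ recalled in the remark after the definition of $M_{\Phi,\varphi}$, we get $\|Mf_2\|_{L_\Phi(B)},\,\|Mf_2\|_{WL_\Phi(B)}\lesssim\mathcal A/\Phi^{-1}(r^{-Q})$. Combining this with the estimate for $Mf_1$ proves both \eqref{4.5} and \eqref{eq100WX}. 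The main technical point I expect to be delicate is the geometric step: correctly extracting $s\gtrsim r$ and the inclusion $B(y,s)\subset B(x,2K_1s)$ from the quasi-triangle constant $K_1$, and using the $Q$-homogeneity \eqref{Qhomogeneous} to replace $\Phi^{-1}(\mu(B(x,2K_1 s))^{-1})$ by $\Phi^{-1}((2K_1 s)^{-Q})$ without losing control of constants.
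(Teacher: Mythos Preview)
Your proposal is correct and follows essentially the same route as the paper: the identical splitting $f=f\chi_{2K_1B}+f\chi_{\complement(2K_1B)}$, the use of Theorem~\ref{maxorl} for the near part together with \eqref{convconc} to absorb the constant $2K_1$, and the same geometric argument (forcing $s\ge r$ and the inclusion $B(y,s)\subset B(x,2K_1s)$) combined with Lemma~\ref{lemHold} and $Q$-homogeneity for the far part. The only cosmetic difference is that the paper phrases the absorption step for $\|f\|_{L_\Phi(B(x,2K_1r))}$ via monotonicity of $t\mapsto\|f\|_{L_\Phi(B(x,t))}$ and of $\Phi^{-1}$ rather than your multiply-and-divide rewriting, but the content is the same.
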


\begin{proof}
Let $\Phi \in \nabla_2$. We put $f=f_1+f_2$, where $f_1=f\chi_{B(x,2kr)}$ and $f_2=f\chi_{{\dual}B(x,2kr)}$, where $k$ is the constant from the triangle inequality \eqref{triinq}.

\emph{Estimation of $Mf_1$}: By Theorem \ref{maxorl} we have
$$
\|M f_1\|_{L_{\Phi}(B)}\leq \|M f_1\|_{L_{\Phi}(X)}\lesssim \|f_1\|_{L_{\Phi}(X)}=\|f\|_{L_{\Phi}(B(x,2kr))}.
$$
By using the monotonicity of the functions $\|f\|_{L_{\Phi}(B(x,t))}$, ${\Phi^{-1}\big(t\big)}$ with respect to $t$ and \eqref{convconc} we get,
\begin{equation}\label{dsaad}
\begin{split}
&\frac{1}{\Phi^{-1}\big(r^{-Q}\big)} \,  \sup_{t>2kr} \Phi^{-1}\big(t^{-Q}\big) \|f\|_{L_{\Phi}(B(x,t))}
\\
&\hskip+1cm\geq \frac{\|f\|_{L_{\Phi}(B(x,2kr))}}{\Phi^{-1}\big(r^{-Q}\big)} \,  \sup_{t>2kr} \Phi^{-1}\big(t^{-Q}\big)\gtrsim \|f\|_{L_{\Phi}(B(x,2kr))}.
\end{split}
\end{equation}
Consequently we have
\begin{equation}\label{mf1}
\|M f_1\|_{L_{\Phi}(B)} \lesssim \frac{1}{\Phi^{-1}\big(r^{-Q}\big)} \,  \sup_{t>r} \Phi^{-1}\big(t^{-Q}\big) \|f\|_{L_{\Phi}(B(x,t))}
\end{equation}

\emph{Estimation of $Mf_2$}: Let $y$ be an arbitrary point from $B$. If $B(y,t)\cap {\dual}(B(x,2kr))\neq\emptyset,$ then $t>r$. Indeed, if $z\in B(y,t)\cap  {\dual} (B(x,2kr)),$
then $t > d(y,z) \geq \frac{1}{k}d(x,z)-d(x,y)>2r-r=r$.

On the other hand, $B(y,t)\cap {\dual} (B(x,2kr))\subset B(x,2kt)$. Indeed, if  $z\in B(y,t)\cap {\dual} (B(x,2kr))$, then
we get $d(x,z)\leq kd(y,z)+kd(x,y)<kt+kr<2kt$.

Therefore,
\begin{equation*}
\begin{split}
M f_2(y) & = \sup_{t>0}\frac{1}{\mu(B(y,t))} \int_{B(y,t)\cap {{\dual}(B(x,2kr))}}|f(z)|d\mu(z)
\\
& \le \sup_{t>r}\frac{1}{\mu(B(y,t))} \int_{B(x,2kt)}|f(z)|d\mu(z)
\\
&\le   \sup_{t>r} \frac{C}{\mu(B(y,2kt))} \int_{B(x,2kt)}|f(z)|d\mu(z)
\\
&=   \sup_{t>2kr} \frac{C}{\mu(B(y,t))} \int_{B(x,t)}|f(z)|d\mu(z)
\end{split}
\end{equation*}
by the doubling condition \eqref{doubl2}.

Hence by Lemma \ref{lemHold} and \eqref{Qhomogeneous}
\begin{equation}\label{fadhg}
\begin{split}
M f_2(y)  \lesssim   \sup_{t>2kr} \frac{\mu(B(x,t))}{\mu(B(y,t))} \Phi^{-1}\big(\mu(B(x,t))^{-1}\big) \|f\|_{L_{\Phi}(B(x,t))} \lesssim \sup_{t>r} \Phi^{-1}\big(t^{-Q}\big) \|f\|_{L_{\Phi}(B(x,t))}
\end{split}
\end{equation}
Thus the function $Mf_2(y)$, with fixed $x$ and $r$, is dominated by the expression not depending on $y$. Then we integrate the obtained estimate for $Mf_2(y)$ in $y$ over $B$, we get
\begin{equation}\label{mf2}
\|M f_2\|_{L_{\Phi}(B)} \lesssim \frac{1}{\Phi^{-1}\big(r^{-Q}\big)} \,  \sup_{t>r} \Phi^{-1}\big(t^{-Q}\big) \|f\|_{L_{\Phi}(B(x,t))}
\end{equation}
Gathering the estimates \eqref{mf1} and \eqref{mf2} we arrive at \eqref{4.5}.

Let now $\Phi$ be an arbitrary  Young function. It is obvious that
\begin{gather*}
\|M f\|_{WL_{\Phi}(B)} \leq \|M f_1\|_{WL_{\Phi}(B)}+ \|M f_2\|_{WL_{\Phi}(B)}.
\end{gather*}
By the boundedness of the operator $M$ from $L_{\Phi}(X)$ to $WL_{\Phi}(X)$, provided by Theorem
\ref{maxorl},  we have
$$
\|M f_1\|_{WL_{\Phi}(B)} \lesssim \|f\|_{L_{\Phi}(B(x,2kr))}.
$$
By using \eqref{dsaad}, \eqref{fadhg} and \eqref{normofcharac} we arrive at \eqref{eq100WX}.
\end{proof}

\begin{thm}\label{thm4.4.max}
Let  $\Phi \in \mathcal{Y}$, the functions $\varphi_1,\varphi_2$ and $ \Phi$ satisfy the condition
\begin{equation}\label{bounmax}
\sup_{r<t<\infty} \Phi^{-1}\big(t^{-Q}\big) \es_{t<s<\i}\frac{\varphi_1(s)}{\Phi^{-1}\big(s^{-Q}\big)} \le C \, \varphi_2(r),
\end{equation}
where $C$ does not depend on $r$. Then the maximal operator $M$ is bounded from $M_{\Phi,\varphi_1}(X)$ to $WM_{\Phi,\varphi_2}(X)$ and
for $\Phi \in \nabla_2$, the operator $M$ is bounded from $M_{\Phi,\varphi_1}(X)$ to $M_{\Phi,\varphi_2}(X)$.
\end{thm}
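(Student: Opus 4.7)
The plan is to combine the local pointwise-style estimates for $Mf$ provided by Lemma \ref{lem4.2.} with the hypothesis \eqref{bounmax}, which is custom-designed to convert the sup-over-$t>r$ expression on the right-hand side of Lemma \ref{lem4.2.} into $\varphi_2(r)$.

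First I would fix a ball $B=B(x,r)$ and start from the inequality
\[
\|M f\|_{L_{\Phi}(B)} \lesssim \frac{1}{\Phi^{-1}(r^{-Q})}\,\sup_{t>r}\Phi^{-1}(t^{-Q})\,\|f\|_{L_{\Phi}(B(x,t))}
\]
in the strong case ($\Phi\in\nabla_2$), and the analogous inequality \eqref{eq100WX} with $WL_\Phi$ on the left in the weak case. The next step is the standard essential-infimum trick: for every $s>t$ one has $\|f\|_{L_\Phi(B(x,t))}\le \|f\|_{L_\Phi(B(x,s))}$ by monotonicity of the Orlicz norm with respect to the underlying set, and by definition of the $M_{\Phi,\varphi_1}$-norm,
\[
\|f\|_{L_\Phi(B(x,s))} \le \frac{\varphi_1(s)}{\Phi^{-1}(s^{-Q})}\,\|f\|_{M_{\Phi,\varphi_1}}.
\]
Taking the essential infimum over $s>t$ and multiplying by $\Phi^{-1}(t^{-Q})$ yields
\[
\Phi^{-1}(t^{-Q})\,\|f\|_{L_\Phi(B(x,t))} \le \Phi^{-1}(t^{-Q})\,\es_{s>t}\frac{\varphi_1(s)}{\Phi^{-1}(s^{-Q})}\,\|f\|_{M_{\Phi,\varphi_1}}.
\]

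Then I take $\sup_{t>r}$ and apply the hypothesis \eqref{bounmax} to obtain
\[
\sup_{t>r}\Phi^{-1}(t^{-Q})\,\|f\|_{L_\Phi(B(x,t))} \lesssim \varphi_2(r)\,\|f\|_{M_{\Phi,\varphi_1}}.
\]
Substituting back into the Lemma \ref{lem4.2.} estimate gives
\[
\|Mf\|_{L_\Phi(B(x,r))} \lesssim \frac{\varphi_2(r)}{\Phi^{-1}(r^{-Q})}\,\|f\|_{M_{\Phi,\varphi_1}},
\]
and multiplying by $\varphi_2(r)^{-1}\Phi^{-1}(r^{-Q})$ and taking $\sup_{x\in X,\,r>0}$ yields exactly $\|Mf\|_{M_{\Phi,\varphi_2}} \lesssim \|f\|_{M_{\Phi,\varphi_1}}$. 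The weak-type conclusion under the general assumption $\Phi\in\mathcal Y$ follows by the identical chain starting from \eqref{eq100WX} in place of \eqref{4.5}.

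There is no serious obstacle: the whole argument reduces to (i) invoking Lemma \ref{lem4.2.} to localize the estimate on a ball, (ii) the monotonicity-plus-definition step that lets us insert an essential infimum without loss, and (iii) a direct application of the hypothesis \eqref{bounmax}. The only point that requires care is making the ``$\sup$ of an $\es$'' structure on the right-hand side of \eqref{bounmax} match what actually appears after the estimate above, which is why the essential infimum is taken \emph{before} the sup in $t$; writing the two steps in this order is what lets hypothesis \eqref{bounmax} apply verbatim.
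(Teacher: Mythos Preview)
Your proposal is correct and follows essentially the same approach as the paper's proof: invoke Lemma~\ref{lem4.2.}, use monotonicity of $\|f\|_{L_\Phi(B(x,t))}$ in $t$ together with the definition of the $M_{\Phi,\varphi_1}$-norm to insert the essential infimum, and then apply condition~\eqref{bounmax}. The paper presents the essential-infimum step via the identity $(\es g)^{-1}=\ess(1/g)$, but the logic is identical to yours.
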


\begin{proof}
Note that
\begin{align*}
\Big(\es\limits_{x\in A} f(x) \Big)^{-1}=\ess\limits_{x\in A}\frac{1}{f(x)}
\end{align*}
is true for any real-valued nonnegative function $f$ and measurable on $A$ and the fact that $\|f\|_{L_{\Phi}(B(x,t))}$ is a nondecreasing function of $t$
\begin{align*}
\frac{\|f\|_{L_{\Phi}(B(x,t))}}{\es_{0<t<s<\infty}\frac{\varphi_1(s)}{\Phi^{-1}\big(s^{-Q}\big)}}
& = \ess_{0<t<s<\infty} \frac{\Phi^{-1}\big(s^{-Q}\big)\|f\|_{L_{\Phi}(B(x,t))}}{\varphi_1(s)}
\\
& \le \sup\limits_{s>0, x\in X} \frac{\Phi^{-1}\big(s^{-Q}\big)\|f\|_{L_{\Phi}(B(x,s))}}{\varphi_1(s)}
 = \|f\|_{M_{\Phi,\varphi_1}}.
\end{align*}
Since $(\varphi_1,\varphi_2)$ and $\Phi$ satisfy the condition \eqref{bounmax},
\begin{align*}
& \sup\limits_{r<t<\infty}\|f\|_{L_{\Phi}(B(x,t))}
\Phi^{-1}\big(t^{-Q}\big)
\\
&\le \sup\limits_{r<t<\infty}\frac{\|f\|_{L_{\Phi}(B(x,t))}}{\es_{t<s<\infty} \frac{\varphi_1(s)}{\Phi^{-1}\big(s^{-Q}\big)}}
\es_{t<s<\infty} \frac{\varphi_1(s)}{\Phi^{-1}\big(s^{-Q}\big)}\Phi^{-1}\big(t^{-Q}\big)
\\
&\le C \|f\|_{M_{\Phi,\varphi_1}}\sup\limits_{r<t<\infty}
\Big(\es_{t<s<\infty}\frac{\varphi_1(s)}{\Phi^{-1}\big(s^{-Q}\big)}\Big) \, \Phi^{-1}\big(t^{-Q}\big)
\\
& \le C \varphi_2(r)\|f\|_{M_{\Phi,\varphi_1}}
\end{align*}
Then by \eqref{4.5}
\begin{equation*}
\begin{split}
\|M f\|_{M_{\Phi,\varphi_2}} & \lesssim \sup\limits_{x\in X, r>0}
\frac{1}{\varphi_2(r)}\,
\sup_{t>r} \Phi^{-1}\big(t^{-Q}\big) \|f\|_{L_{\Phi}(B(x,t))}
\\
& \lesssim \sup\limits_{x\in X, r>0}
\varphi_1(r)^{-1} \Phi^{-1}\big(r^{-Q}\big)\,  \|f\|_{L_{\Phi}(B(x,r))}
\\
& = \|f\|_{M_{\Phi,\varphi_1}}
\end{split}
\end{equation*}
The estimate $\|Mf\|_{WM_{\Phi,\varphi_2}}\lesssim\|f\|_{M_{\Phi,\varphi_1}}$ can be proved similarly by the help of local estimate \eqref{eq100WX}.
\end{proof}

The commutators generated by  $b\in  L^1_{\rm loc}(X)$ and the maximal operator $M$ is defined by
$$
M_{b}(f)(x)=\sup_{t>0}\mu(B(x,t))^{-1} \, \int _{B(x,t)}|b(x)-b(y)||f(y)|d\mu(y).
$$

We recall that the  space  $BMO(X)=\{ b\in L^1_{\rm loc}(X) ~ : ~ \| b \|_{\ast} < \infty  \}$  is defined by the seminorm
\begin{equation*}
\|b\|_\ast:=\sup_{x\in X, r>0}\frac{1}{\mu(B(x,r))} \int_{B(x,r)}|b(y)-b_{B(x,r)}|d\mu(y)<\infty,
\end{equation*}
where
$
b_{B(x,r)}=\frac{1}{\mu(B(x,r))} \int_{B(x,r)} b(y)d\mu(y).
$
We will need the following properties of BMO-functions:
\begin{equation} \label{lem2.4.}
\|b\|_\ast \thickapprox \sup_{x\in X, r>0}\left(\frac{1}{\mu(B(x,r))} \int_{B(x,r)}|b(y)-b_{B(x,r)}|^p d\mu(y)\right)^{\frac{1}{p}},
\end{equation}
where $1\leq p<\infty$, and
\begin{equation} \label{propBMO}
\left|b_{B(x,r)}-b_{B(x,t)}\right| \le C \|b\|_\ast \ln \frac{t}{r} \;\;\; \mbox{for} \;\;\; 0<2r<t,
\end{equation}
where $C$ does not depend on $b$, $x$, $r$ and $t$.

Next, we recall the notion of weights. Let $w$ be a locally integrable and positive
function on $X$. The function $w$ is said to be a Muckenhoupt $A_1$ weight if there exists a
positive constant $C$ such that for any ball $B$
$$\frac{1}{\mu(B)}\int_{B}w(x)d\mu(x) \le C \es_{x\in B}w(x).$$

\begin{lem}\label{rhi}\cite[Chapter 1]{GenGogKokKr}
Let $\omega\in A_1$, then the reverse H\"{o}lder inequality holds, that is, there exist $q > 1$ such that
$$
\left(\frac{1}{\mu(B)}\int_{B}w(x)^q d\mu(x)\right)^{\frac{1}{q}}\lesssim \frac{1}{\mu(B)}\int_{B}w(x)d\mu(x)
$$
for all balls $B$.
\end{lem}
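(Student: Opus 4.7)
The plan is to follow the classical Coifman--Fefferman route, adapted to $(X,d,\mu)$: derive a geometric ``good-$\lambda$'' inequality from the $A_1$ hypothesis, iterate it to obtain polynomial decay of the $\mu$-distribution function of $w$ on a fixed ball, and then finish by layer cake. Concretely, I will fix a ball $B_0$, set $w_0:=\mu(B_0)^{-1}\int_{B_0}w\,d\mu$ and $F(\lambda):=\mu(\{x\in B_0:w(x)>\lambda\})$. The target is the bound $F(\lambda)\lesssim \mu(B_0)\,(w_0/\lambda)^{1+\delta}$ for some $\delta>0$ and all $\lambda>w_0$; once it is in hand, the identity $\int_{B_0}w^q\,d\mu=q\int_0^\infty \lambda^{q-1}F(\lambda)\,d\lambda$ splits into a trivial contribution over $(0,w_0]$ bounded by $w_0^q\mu(B_0)$ and a tail over $(w_0,\infty)$ that converges precisely for $q<1+\delta$, delivering $\bigl(\mu(B_0)^{-1}\int_{B_0}w^q\,d\mu\bigr)^{1/q}\lesssim w_0$, which is the claim.

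The central step will be a local good-$\lambda$ estimate: there exist $\theta\in(0,1)$ and $K>1$, depending only on the $A_1$ constant $C_{A_1}$ of $w$ and the doubling constant $K_2$, such that
$$
w(\{x\in B_0:w(x)>K\lambda\})\le\theta\,w(\{x\in B_0:w(x)>\lambda\})\qquad\text{for all }\lambda>w_0.
$$
To produce it, I will apply a Calder\'on--Zygmund stopping-time decomposition to $w\chi_{B_0}$ at level $\lambda$, yielding pairwise disjoint balls $\{B_j\}\subset B_0$ with $\lambda<w(B_j)/\mu(B_j)\le K_2\lambda$ and $w\le\lambda$ almost everywhere on $B_0\setminus\bigcup_j B_j$. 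Two consequences of $A_1$ will then be exploited on each $B_j$: the pointwise lower bound $w(x)\ge w(B_j)/(C_{A_1}\mu(B_j))>\lambda/C_{A_1}$ a.e., and, via Chebyshev and the upper stopping bound, the $\mu$-smallness estimate $\mu(\{w>K\lambda\}\cap B_j)\le(K_2/K)\mu(B_j)$. Feeding the pointwise lower bound into the complementary set $B_j\setminus\{w>K\lambda\}$ gives $w(\{w>K\lambda\}\cap B_j)\le\bigl(1-(1-K_2/K)/C_{A_1}\bigr)w(B_j)=:\theta\,w(B_j)$ with $\theta<1$ as soon as $K>K_2$. Summing in $j$ and using the first consequence in the form $\bigcup_j B_j\subset\{w>\lambda/C_{A_1}\}$ (up to a $\mu$-null set) yields
$$
w(\{w>K\lambda\}\cap B_0)\le\theta\sum_j w(B_j)\le\theta\,w(\{w>\lambda/C_{A_1}\}\cap B_0),
$$
and the rescaling $\lambda\mapsto C_{A_1}\lambda$ gives the stated good-$\lambda$ inequality with constant $K':=KC_{A_1}$.

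Iterating this inequality $n$ times produces $w(\{w>(K')^n w_0\}\cap B_0)\le\theta^n w(B_0)$, and a standard logarithmic interpolation in $\lambda$ promotes it to $w(\{w>\lambda\}\cap B_0)\lesssim w(B_0)(w_0/\lambda)^{\delta}$ for every $\lambda>w_0$, with $\delta=-\log\theta/\log K'>0$. Combined with the trivial estimate $\lambda F(\lambda)\le w(\{w>\lambda\}\cap B_0)$ and the identity $w(B_0)=w_0\mu(B_0)$, this is exactly the distribution-function bound required in the first paragraph, and choosing any $q\in(1,1+\delta)$ finishes the proof. The main technical obstacle will be the Calder\'on--Zygmund decomposition itself: in the absence of Euclidean dyadic cubes one must invoke Christ's dyadic system adapted to $(X,d,\mu)$ (or carry out a careful Vitali-type covering-plus-stopping-time argument by hand), being mindful of how the doubling constant $K_2$ and the quasi-triangle constant $K_1$ propagate into the upper stopping bound $K_2\lambda$ and into the containment of the stopping balls inside $B_0$ or a fixed dilate of it.
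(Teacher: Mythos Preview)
The paper does not prove this lemma at all; it is quoted without proof from \cite[Chapter~1]{GenGogKokKr}. There is therefore no ``paper's own proof'' to compare your attempt against.

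That said, your outline is the standard Coifman--Fefferman argument and is essentially how the result is established in the cited reference and throughout the literature on $A_p$ weights in spaces of homogeneous type. The computation leading to $\theta=1-(1-K_2/K)/C_{A_1}$ is correct (you used the sharp pointwise bound $w(x)\ge w(B_j)/(C_{A_1}\mu(B_j))$ rather than the weaker $w(x)>\lambda/C_{A_1}$, which is what makes the arithmetic work out). The iteration and layer-cake conclusion are routine.

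Two small technical remarks. First, as you anticipate, the Calder\'on--Zygmund stopping time in a space of homogeneous type is cleanest via Christ's dyadic cubes rather than balls: these give a genuinely pairwise-disjoint family with the parent/child doubling relation you need for the upper bound $K_2\lambda$, and the cubes selected will lie in a fixed dilate of $B_0$ rather than in $B_0$ itself, so one first proves the reverse H\"older inequality with $cB_0$ on the right and then absorbs the dilation constant by doubling. Second, your iteration nominally invokes the good-$\lambda$ inequality at $\lambda=w_0$, where the strict hypothesis $\lambda>w_0$ is borderline; this is harmless, since the base case $w(\{w>w_0\}\cap B_0)\le w(B_0)$ holds trivially and the induction from there uses only $\lambda\ge K'w_0>w_0$.
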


\begin{lem}\label{keylem}
Let $\Phi$ be a Young function with $\Phi\in \Delta_2$. Then we have
$$
\frac{1}{2\mu(B)}\int_{B}|f(x)|d\mu(x)\le \Phi^{-1}\big(\mu(B)^{-1}\big)\left\|f\right\|_{L_{\Phi}(B)}\le C \left(\frac{1}{\mu(B)}\int_{B}|f(x)|^p d\mu(x)\right)^{\frac{1}{p}}
$$
for some $1<p<\infty$.
\end{lem}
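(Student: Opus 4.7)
I would prove the two inequalities separately.

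The left inequality is essentially a restatement of Lemma~\ref{lemHold}: dividing the bound $\|f\|_{L_1(B)} \leq 2\mu(B)\,\Phi^{-1}(\mu(B)^{-1})\|f\|_{L_\Phi(B)}$ through by $2\mu(B)$ yields the desired lower estimate, with no use of $\Delta_2$.

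For the right inequality, my plan is to extract polynomial upper growth from $\Delta_2$ and then verify the Luxemburg modular inequality directly. Iterating $\Phi(2t)\leq k\Phi(t)$ and interpolating over dyadic scales shows that $t\mapsto \Phi(t)/t^{p}$ is almost decreasing with $p=\log_2 k$; convexity and $\Phi(0)=0$ force $k\geq 2$, hence $p\geq 1$, and one may take $p>1$ (replacing $p$ by a slightly larger value in the degenerate linear case, where the claim already reduces to ordinary Jensen). Equivalently, $\Phi(s)\leq C_0(s/t)^{p}\,\Phi(t)$ for $s\geq t>0$.

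Set $M:=\Phi^{-1}(\mu(B)^{-1})$ and $\rho:=\bigl(\frac{1}{\mu(B)}\int_B|f|^p\,d\mu\bigr)^{1/p}$. I would show $\|f\|_{L_\Phi(B)}\leq C\rho/M$ for a suitably large constant $C$; by the definition of the Luxemburg norm this is equivalent to verifying the modular bound $\int_B \Phi(|f|M/(C\rho))\,d\mu\leq 1$. I would split at the level $\{|f|=C\rho\}$. On $\{|f|\leq C\rho\}$ the argument of $\Phi$ stays below $M$, so convexity gives $\Phi(|f|M/(C\rho))\leq (|f|/(C\rho))\Phi(M)=|f|/(C\rho\,\mu(B))$; combined with the H\"older bound $\int_B|f|\leq \rho\,\mu(B)$ this contributes at most $1/C$. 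On $\{|f|>C\rho\}$ the argument exceeds $M$, so the polynomial bound above, applied with $t=M$, gives $\Phi(|f|M/(C\rho))\leq C_0(|f|/(C\rho))^p\Phi(M)$, and integrating against $|f|^p$ produces a contribution $\leq C_0/C^p$. Choosing $C$ large enough that $1/C+C_0/C^p\leq 1$ closes the estimate.

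The delicate point, and what I expect to be the main obstacle, is the choice of splitting scale: the naive Luxemburg level $\lambda M$ does not balance the two contributions, whereas splitting at $C\rho$ is precisely the scale dictated by the target quantity and makes both terms decay appropriately in $C$. Without the polynomial upper growth supplied by $\Delta_2$, only convexity is available and one recovers only the $L^1$-type control already provided by the left inequality; it is exactly the polynomial $p=\log_2 k$ extracted from $\Delta_2$ that supports the reverse H\"older-type passage from $L_\Phi(B)$ to $L^p(B)$.
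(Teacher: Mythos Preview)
Your proof is correct and takes a genuinely different route from the paper's. The paper argues by duality and Rubio de Francia iteration: since $\Phi\in\Delta_2$ implies $\widetilde{\Phi}\in\nabla_2$, the maximal operator $M$ is bounded on $L_{\widetilde{\Phi}}(X)$ with norm $Q$; for each $g\in L_{\widetilde{\Phi}}$ with $\|g\|_{L_{\widetilde{\Phi}}}\le 1$ one forms the series $Rg=\sum_{k\ge 0}(2Q)^{-k}M^kg$, which dominates $|g|$, stays in $L_{\widetilde{\Phi}}$, and is an $A_1$ weight with uniformly bounded $A_1$ constant. The reverse H\"older inequality (Lemma~\ref{rhi}) then supplies a single exponent $q>1$ with $\|Rg\|_{L^q(B)}\lesssim \mu(B)^{-1/q'}\big/\Phi^{-1}(\mu(B)^{-1})$, and pairing $f$ against $g$ via H\"older and Orlicz duality yields the right-hand inequality with $p=q'$. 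Your direct modular argument, by contrast, uses nothing beyond the elementary polynomial growth $\Phi(s)\le C_0(s/t)^{p}\Phi(t)$ for $s\ge t$, $p=\log_2 k$, that $\Delta_2$ provides, together with a well-chosen level-set splitting; it avoids the maximal operator, $A_1$ weight theory, reverse H\"older, and duality altogether, and produces an explicit exponent in terms of the $\Delta_2$ constant. The paper's approach is heavier but is an instance of a general extrapolation machinery (cf.\ \cite{IzukiSaw}) that transfers to more abstract Banach function space settings where your hands-on modular computation would need to be redone.
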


\begin{proof}
The left-hand side inequality is just Lemma \ref{lemHold}.

Next we prove the right-hand side inequality. Our idea is based on \cite{IzukiSaw}. Take $g\in L_{\widetilde{\Phi}}$ with $\|g\|_{L_{\widetilde{\Phi}}}\le 1$. Note that $\widetilde{\Phi } \in \nabla_2$ since $\Phi \in \Delta_2$, therefore $M$ is bounded on $L_{\widetilde{\Phi}}(X)$ from Theorem \ref{maxorl}. Let $Q:=\|M\|_{L_{\widetilde{\Phi}}\to L_{\widetilde{\Phi}}}$ and define a function
$$
Rg(x):=\sum_{k=0}^{\infty}\frac{M^{k}g(x)}{(2Q)^k},
$$
where
\[ M^{k}g:= \left\{
\begin{array}{ll}
      |g| & k=0, \\
      Mg & k=1, \\
      M(M^{k-1}g) & k\geq 2. \\
\end{array}
\right. \]

For every $g\in L_{\widetilde{\Phi}}$ with $\|g\|_{L_{\widetilde{\Phi}}}\le 1$, the function $Rg$ satisfies the following properties:
\begin{itemize}
  \item $|g(x)|\le Rg(x)$ for almost every $x\in X$;
  \item $\|Rg\|_{L_{\widetilde{\Phi}}}\le 2\|g\|_{L_{\widetilde{\Phi}}}$
  \item $M(Rg)(x)\le 2QRg(x)$, that is, $Rg$ is a Muckenhoupt $A_1$ weight with the $A_1$ constant less than or equal to $2Q$.
\end{itemize}
By Lemma \ref{rhi}, there exist positive constants $q > 1$ and $C$ independent of $g$ such that for all balls $B$,
$$
\left(\frac{1}{\mu(B)}\int_{B}Rg(x)^q d\mu(x)\right)^{\frac{1}{q}}\le \frac{C}{\mu(B)} \int_{B}Rg(x) d\mu(x).
$$
By Theorem \ref{HoSHTOrl} and Lemma \ref{lemHold}, we obtain
\begin{align*}
  \|Rg\|_{L^q(B)}&=\mu(B)^{1/q}\left(\frac{1}{\mu(B)}\int_{B}Rg(x)^q d\mu(x)\right)^{\frac{1}{q}} \le \mu(B)^{1/q}\frac{C}{\mu(B)} \int_{B}Rg(x) d\mu(x)  \\
  & \le C\mu(B)^{-1/q^{\prime}}\frac{\|Rg\|_{L_{\widetilde{\Phi}}}}{\Phi^{-1}\big(\mu(B)^{-1}\big)}\le C \mu(B)^{-1/q^{\prime}} \frac{1}{\Phi^{-1}\big(\mu(B)^{-1}\big)}.
\end{align*}
Thus we have
\begin{align*}
   \int_{B}|f(x)g(x)|d\mu(x) & \le \int_{B}|f(x)|Rg(x)d\mu(x) \le \|f\|_{L_{q^\prime}(B)}\|Rg\|_{L_q(B)} \\
   & \le C \left(\frac{1}{\mu(B)}\int_{B}|f(x)|^{q^\prime} d\mu(x)\right)^{\frac{1}{q^\prime}}\frac{1}{\Phi^{-1}\big(\mu(B)^{-1}\big)}.
 \end{align*}
Since the Luxemburg-Nakano norm is equivalent to the Orlicz norm we get
\begin{align*}
\left\|f\right\|_{L_{\Phi}(B)} & \le \sup\left\{\left|\int_{B}f(x)g(x)d\mu(x)\right|: g\in L_{\widetilde{\Phi}},~~\|g\|_{L_{\widetilde{\Phi}}}\le 1\right\} \\
   & \le C \left(\frac{1}{\mu(B)}\int_{B}|f(x)|^{q^\prime} d\mu(x)\right)^{\frac{1}{q^\prime}}\frac{1}{\Phi^{-1}\big(\mu(B)^{-1}\big)}.
\end{align*}
Consequently, the right-hand side inequality follows with $p = q$.
\end{proof}

We have the following result from \eqref{lem2.4.} and Lemma \ref{keylem}.
\begin{lem}\label{Bmo-orlicz}
Let $b\in BMO(X)$ and $\Phi$ be a Young function with $\Phi\in\Delta_2$. Then
$$
\|b\|_\ast \thickapprox \sup_{x\in X, r>0}\Phi^{-1}\big(r^{-Q}\big)\left\|b(\cdot)-b_{B(x,r)}\right\|_{L_{\Phi}(B(x,r))}.
$$
\end{lem}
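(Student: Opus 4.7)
The plan is to deduce the equivalence by applying Lemma \ref{keylem} to the function $f(y)=b(y)-b_{B(x,r)}$ for each ball $B=B(x,r)$, and then combining with the $L_p$-characterization of BMO recorded in \eqref{lem2.4.}. Throughout, I will use the homogeneity relation $\Phi^{-1}(\mu(B(x,r))^{-1})\approx \Phi^{-1}(r^{-Q})$ coming from \eqref{Qhomogeneous} and \eqref{convconc}, so that the two quantities appearing in the statement are interchangeable up to constants.

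For the lower bound on the supremum, I would invoke the left-hand inequality in Lemma \ref{keylem} applied to $f=b-b_{B(x,r)}$, which gives
$$
\frac{1}{\mu(B(x,r))}\int_{B(x,r)}|b(y)-b_{B(x,r)}|\,d\mu(y)\lesssim \Phi^{-1}\!\big(\mu(B(x,r))^{-1}\big)\,\|b-b_{B(x,r)}\|_{L_\Phi(B(x,r))}.
$$
Taking the supremum over $x\in X$ and $r>0$ on both sides and using the definition of $\|b\|_\ast$ together with the equivalence $\Phi^{-1}(\mu(B(x,r))^{-1})\approx \Phi^{-1}(r^{-Q})$ produces $\|b\|_\ast \lesssim \sup_{x,r}\Phi^{-1}(r^{-Q})\|b(\cdot)-b_{B(x,r)}\|_{L_\Phi(B(x,r))}$.

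For the reverse direction I would apply the right-hand inequality in Lemma \ref{keylem} (valid since $\Phi\in\Delta_2$) to the same function $f=b-b_{B(x,r)}$, obtaining a $p>1$ such that
$$
\Phi^{-1}\!\big(\mu(B(x,r))^{-1}\big)\,\|b-b_{B(x,r)}\|_{L_\Phi(B(x,r))} \lesssim \left(\frac{1}{\mu(B(x,r))}\int_{B(x,r)}|b(y)-b_{B(x,r)}|^p\,d\mu(y)\right)^{1/p}.
$$
The right-hand side is exactly the quantity whose supremum is equivalent to $\|b\|_\ast$ by \eqref{lem2.4.}. Taking the supremum in $x,r$ and again using $\Phi^{-1}(\mu(B(x,r))^{-1})\approx \Phi^{-1}(r^{-Q})$ yields the remaining inequality, completing the proof.

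There is really no substantial obstacle here: both directions are immediate consequences of Lemma \ref{keylem} together with \eqref{lem2.4.}, and the only non-trivial input is that $\Phi\in\Delta_2$ is precisely what makes the right-hand inequality of Lemma \ref{keylem} available (through $\widetilde{\Phi}\in\nabla_2$ and the boundedness of $M$ on $L_{\widetilde{\Phi}}$). The small bookkeeping point to keep in mind is the swap between $\mu(B(x,r))^{-1}$ and $r^{-Q}$ inside $\Phi^{-1}$, which is harmless thanks to the $Q$-homogeneity of $\mu$.
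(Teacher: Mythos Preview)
Your argument is correct and is exactly the route the paper itself indicates: the paper simply states that the lemma follows from \eqref{lem2.4.} and Lemma~\ref{keylem}, and your proof spells out precisely how those two ingredients combine (left inequality of Lemma~\ref{keylem} for $\|b\|_\ast \lesssim \sup(\cdots)$, right inequality plus \eqref{lem2.4.} for the reverse, with the $Q$-homogeneity adjustment $\Phi^{-1}(\mu(B)^{-1})\approx\Phi^{-1}(r^{-Q})$).
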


The known boundedness statements for the commutator operator $M_{b}$ on Orlicz spaces run as follows,
see \cite[Theorem 1.9 and Corollary 2.3]{FuYangYuan}. Note that in \cite{FuYangYuan} a more general case of multi-linear commutators was studied.

\begin{thm}\label{TinOrlicz}
Let $\Phi$ be a Young function with $\Phi\in\Delta_2\cap\nabla_2$ and $b\in BMO(X)$. Then $M_{b}$ is bounded on $L_{\Phi}(X)$ and the inequality
\begin{equation}\label{Mbbdninq}
\|M_b f\|_{L_{\Phi}}\leq C_0 \|b\|_{\ast}\|f\|_{L_{\Phi}}
\end{equation}
holds with constant $C_0$ independent of $f$.
\end{thm}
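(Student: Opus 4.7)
The plan is to obtain \eqref{Mbbdninq} through a two-step strategy: first, dominate $M_b f$ pointwise by an Orlicz-type maximal function weighted by $\|b\|_\ast$; second, transfer this bound to $L_\Phi$ using the boundedness of the ordinary maximal operator $M$ supplied by Theorem \ref{maxorl}.

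For the pointwise step, the target estimate is
$$
M_b f(x) \le C \, \|b\|_\ast \, M_{L\log L}f(x),
$$
where $M_{L\log L}$ denotes the Orlicz maximal operator associated with the Young function $\Psi(t)=t\log(e+t)$. Fixing $x\in X$ and a ball $B=B(x,t)$ centred at $x$, I would split $|b(x)-b(y)|\le |b(x)-b_B|+|b(y)-b_B|$ and handle the contribution of $|b(y)-b_B|$ via the generalized H\"older inequality \eqref{HoSHTOrl} applied to the complementary Young pair $(\exp L, L\log L)$ on $B$. The factor $\|b-b_B\|_{\exp L,B}$ is controlled by $\|b\|_\ast$ through the John--Nirenberg inequality on $(X,d,\mu)$; Lemma \ref{Bmo-orlicz}, valid under $\Phi\in\Delta_2$, encodes precisely this type of Orlicz-norm BMO characterization. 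The remaining summand involving $|b(x)-b_B|$ is absorbed by an iterative/stopping argument built on \eqref{propBMO}, which compares successive averages $b_{B_k}$ over nested balls and reduces the problem to an estimate already of the desired form.

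For the transfer step, the pointwise equivalence $M_{L\log L}f(x)\thickapprox M(Mf)(x)$ on spaces of homogeneous type is a standard consequence of the doubling condition \eqref{doubl1}, and it reduces matters to bounding $M^2$ on $L_\Phi$. Since $\Phi\in\nabla_2$, Theorem \ref{maxorl} gives that $M$ is bounded on $L_\Phi(X)$, so iterating yields
$$
\|M_b f\|_{L_\Phi}\lesssim \|b\|_\ast\,\|M(Mf)\|_{L_\Phi}\lesssim \|b\|_\ast\,\|Mf\|_{L_\Phi}\lesssim \|b\|_\ast\,\|f\|_{L_\Phi},
$$
which is \eqref{Mbbdninq}.

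The main obstacle lies in the first step, specifically in controlling the term $|b(x)-b_B|$ pointwise, since this prefactor is not dominated by any fixed sublinear operator. The stopping/iteration argument based on \eqref{propBMO}, combined with the Orlicz-norm characterization of BMO in Lemma \ref{Bmo-orlicz} (which is exactly where the $\Delta_2$ hypothesis enters) and the homogeneous-space version of John--Nirenberg, is the delicate part; once the pointwise estimate is in hand, the rest of the proof is an essentially mechanical double application of Theorem \ref{maxorl} via the $\nabla_2$ hypothesis.
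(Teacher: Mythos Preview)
The paper does not supply its own proof of Theorem~\ref{TinOrlicz}; the result is quoted from \cite[Theorem~1.9 and Corollary~2.3]{FuYangYuan} as a known fact, so there is no in-paper argument to compare against.

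Your two-step strategy is a legitimate route: the pointwise inequality $M_b f(x)\lesssim\|b\|_\ast\,M^2 f(x)$ is a known theorem on $\Rn$ and on doubling spaces, and once it is in hand the conclusion follows from two applications of Theorem~\ref{maxorl} exactly as you describe. However, the step you yourself flag as the ``main obstacle'' is genuinely the entire difficulty, and the resolution you sketch does not work as written. An iteration built on \eqref{propBMO}---telescoping $b(x)-b_B$ as $\sum_{j\ge0}\big(b_{2^{-j}B}-b_{2^{-j-1}B}\big)$ over nested balls shrinking to $x$---produces infinitely many summands each of size $O(\|b\|_\ast)$, so the series diverges; the logarithmic growth of $|b(x)-b_B|$ must be played off against the behaviour of $\mu(B)^{-1}\int_B|f|\,d\mu$ rather than bounded in isolation, and you would have to supply that mechanism explicitly. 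A second remark: neither the pointwise domination by $M^2 f$ nor the subsequent iteration of Theorem~\ref{maxorl} uses anything about $\Phi$ beyond $\Phi\in\nabla_2$; the John--Nirenberg/$\exp L$ input is independent of $\Phi$, and Lemma~\ref{Bmo-orlicz} plays no role in this route. So your identification of Lemma~\ref{Bmo-orlicz} as ``exactly where the $\Delta_2$ hypothesis enters'' is misplaced for this particular strategy.
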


\begin{lem}\label{lem5.1.com}
Let $\Phi$ be a Young function with $\Phi\in\Delta_2\cap\nabla_2$, $b \in BMO(X)$,
then the inequality
\begin{equation*}\label{eq5.1g}
\|M_{b}f\|_{L_{\Phi}(B(x_0,r))} \lesssim   \frac{\|b\|_{*}}{\Phi^{-1}\big(r^{-Q}\big)}
 \sup_{t>r} \Big(1+\ln \frac{t}{r}\Big) \Phi^{-1}\big(t^{-Q}\big) \|f\|_{L_{\Phi}(B(x_0,t))}
\end{equation*}
holds for any ball $B(x_0,r)$ and for all $f\in L_{\Phi}^{\rm loc}(X)$.
\end{lem}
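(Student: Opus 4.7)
The plan is to mimic the structure of the proof of Lemma \ref{lem4.2.}, splitting $f=f_1+f_2$ with $f_1=f\chi_{B(x_0,2kr)}$ and $f_2=f\chi_{\dual B(x_0,2kr)}$, and handling $M_bf_1$ by the boundedness on $L_{\Phi}(X)$ of Theorem \ref{TinOrlicz}, while for $M_bf_2$ I will use the standard pointwise splitting
\[
M_b f_2(y)\le |b(y)-b_{B(x_0,r)}|\,Mf_2(y)+M\bigl((b-b_{B(x_0,r)})f_2\bigr)(y),\qquad y\in B(x_0,r),
\]
which follows from the triangle inequality $|b(y)-b(z)|\le|b(y)-b_{B(x_0,r)}|+|b(z)-b_{B(x_0,r)}|$ inside the averages defining $M_b$.

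For the $f_1$ part, Theorem \ref{TinOrlicz} gives
\[
\|M_b f_1\|_{L_\Phi(B(x_0,r))}\le\|M_b f_1\|_{L_\Phi(X)}\lesssim\|b\|_{*}\|f\|_{L_\Phi(B(x_0,2kr))},
\]
and the monotonicity argument \eqref{dsaad} already appearing in Lemma \ref{lem4.2.} absorbs $\|f\|_{L_\Phi(B(x_0,2kr))}$ into the supremum on the right-hand side of the claimed inequality, at the cost of a harmless constant. For the first summand of the pointwise split of $M_bf_2$, Lemma \ref{lem4.2.} (more precisely estimate \eqref{fadhg}) yields
\[
Mf_2(y)\lesssim \sup_{t>r}\Phi^{-1}(t^{-Q})\|f\|_{L_\Phi(B(x_0,t))},
\]
which is a constant in $y$, so multiplying by $|b(y)-b_{B(x_0,r)}|$ and taking the $L_\Phi(B(x_0,r))$ norm, Lemma \ref{Bmo-orlicz} produces exactly the factor $\|b\|_{*}/\Phi^{-1}(r^{-Q})$ required.

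The main work is the second summand $M((b-b_{B(x_0,r)})f_2)(y)$. Here I repeat the geometric observation from the proof of Lemma \ref{lem4.2.}: for $y\in B(x_0,r)$ and $t>0$, whenever $B(y,t)\cap\dual B(x_0,2kr)\neq\varnothing$ one has $t>r$ and $B(y,t)\cap\dual B(x_0,2kr)\subset B(x_0,2kt)$, so by the doubling property
\[
M\bigl((b-b_{B(x_0,r)})f_2\bigr)(y)\lesssim \sup_{t>r}\frac{1}{\mu(B(x_0,t))}\int_{B(x_0,t)}|b(z)-b_{B(x_0,r)}|\,|f(z)|\,d\mu(z).
\]
I then split $b-b_{B(x_0,r)}=(b-b_{B(x_0,t)})+(b_{B(x_0,t)}-b_{B(x_0,r)})$. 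The first piece is controlled via the Hölder inequality \eqref{HoSHTOrl} between $L_\Phi$ and $L_{\widetilde{\Phi}}$, together with the $L_{\widetilde{\Phi}}$-version of Lemma \ref{Bmo-orlicz} (available since $\Phi\in\Delta_2\cap\nabla_2$ forces $\widetilde{\Phi}\in\Delta_2$) and the identity \eqref{2.3}; this yields a bound $\lesssim \|b\|_{*}\Phi^{-1}(t^{-Q})\|f\|_{L_\Phi(B(x_0,t))}$. The second piece is handled by Lemma \ref{lemHold} and the logarithmic estimate \eqref{propBMO}, producing the factor $\|b\|_{*}(1+\ln(t/r))\Phi^{-1}(t^{-Q})\|f\|_{L_\Phi(B(x_0,t))}$; the additive $1$ absorbs the range $r<t\le 2r$ where \eqref{propBMO} does not apply directly but $|b_{B(x_0,t)}-b_{B(x_0,r)}|\lesssim\|b\|_{*}$ by doubling.

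Taking the $L_\Phi(B(x_0,r))$-norm of these pointwise bounds (which no longer depend on $y$) brings in a factor $\|\chi_{B(x_0,r)}\|_{L_\Phi}=1/\Phi^{-1}(\mu(B(x_0,r))^{-1})\approx 1/\Phi^{-1}(r^{-Q})$ via \eqref{normofcharac} and \eqref{Qhomogeneous}, matching the denominator in the target inequality. Adding together the contributions from $M_bf_1$ and the two pieces of $M_bf_2$ completes the proof. The step I anticipate as most delicate is the Hölder-type bound for $\frac{1}{\mu(B(x_0,t))}\int_{B(x_0,t)}|b-b_{B(x_0,t)}|\,|f|\,d\mu$, because it requires carefully converting the $\widetilde{\Phi}$-norm estimate of $b-b_{B(x_0,t)}$ into the right $\Phi^{-1}(t^{-Q})$ factor using \eqref{2.3}.
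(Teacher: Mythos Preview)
Your proposal is correct and follows essentially the same route as the paper's proof: the same $f_1/f_2$ split, Theorem \ref{TinOrlicz} for $f_1$, the geometric reduction of $M_bf_2$ to averages over $B(x_0,t)$ with $t>r$, and then the triangle-inequality split $|b(y)-b(z)|\le|b(y)-b_B|+|b(z)-b_B|$ handled via H\"older, Lemma \ref{Bmo-orlicz} (for $\widetilde{\Phi}$), and \eqref{propBMO}. The only cosmetic difference is that you package the split as the pointwise inequality $M_bf_2\le|b-b_B|\,Mf_2+M\bigl((b-b_B)f_2\bigr)$ before taking norms, whereas the paper takes the $L_\Phi(B)$-norm first and labels the two pieces $J_1$ and $J_2$.
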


\begin{proof}
For  $B=B(x_0,r)$, write $f=f_1+f_2$ with
$f_1=f\chi_{_{2kB}}$ and $f_2=f\chi_{_{\dual (2kB)}}$, where $k$ is the constant from the triangle inequality \eqref{triinq}, so that
$$
\left\|M_{b}f\right\|_{L^\Phi(B)} \leq
\left\|M_{b}f_1 \right\|_{L^\Phi(B)}+
\left\|M_{b}f_2 \right\|_{L^\Phi(B)}.
$$

By the boundedness of the operator $M_{b}$  in the space $L_{\Phi}(X)$ provided by
  Theorem \ref{TinOrlicz}, we obtain
\begin{equation}\label{ffiir}
\|M_{b}f_1\|_{L^\Phi(B)}  \leq
\|M_{b}f_1\|_{L^\Phi(X)}
 \lesssim  \|b\|_{*} \, \|f_1\|_{L^\Phi(X)} = \|b\|_{*} \, \|f\|_{L^\Phi(2B)}.
\end{equation}
As we proceed in the proof of Lemma \ref{lem4.2.}, we have for $x \in B$

\begin{align*}
M_{b}(f_2)(x) \lesssim \sup_{t>2kr} \frac{1}{\mu(B(x_0,t))}\int_{B(x_0,t)}|b(y)-b(x)||f(y)|d\mu(y).
\end{align*}

Then
\begin{align*}
& \|M_{b}f_2\|_{L^\Phi(B)} \lesssim
\left\|\sup_{t>r} \frac{1}{\mu(B(x_0,t))}\int_{B(x_0,t)}|b(y)-b(\cdot)||f(y)|d\mu(y)\right\|_{L^\Phi(B)}
\\
&\lesssim J_1+J_2 = \left\|\sup_{t>2r} \frac{1}{\mu(B(x_0,t))}\int_{B(x_0,t)}|b(y)-b_B||f(y)|d\mu(y)\right\|_{L^\Phi(B)}
\\
&\quad +\left\|\sup_{t>r} \frac{1}{\mu(B(x_0,t))}\int_{B(x_0,t)}|b(\cdot)-b_B||f(y)|d\mu(y)\right\|_{L^\Phi(B)}.
\end{align*}

For the term  $J_1$ by \eqref{Qhomogeneous} and \eqref{normofcharac} we obtain
$$
J_1  \thickapprox \frac{1}{\Phi^{-1}\big(r^{-Q}\big)}\sup_{t>r} \frac{1}{\mu(B(x_0,t))}\int_{B(x_0,t)}
|b(y)-b_B||f(y)|d\mu(y)
$$
and split it as follows:
\begin{align*}
J_1 & \lesssim \frac{1}{\Phi^{-1}\big(r^{-Q}\big)}\sup_{t>r}\frac{1}{\mu(B(x_0,t))} \int_{B(x_0,t)}|b(y)-b_{B(x_0,t)}||f(y)|d\mu(y)
\\
&\quad + \frac{1}{\Phi^{-1}\big(r^{-Q}\big)}\sup_{t>r}\frac{1}{\mu(B(x_0,t))}|b_{B(x_0,r)}-b_{B(x_0,t)}|\int_{B(x_0,t)}|f(y)|d\mu(y).
\end{align*}
Applying H\"older's inequality, by Lemmas \ref{lemHold} and \ref{Bmo-orlicz} and \eqref{propBMO} we get
\allowdisplaybreaks
\begin{align*}
J_1 & \lesssim\frac{1}{\Phi^{-1}\big(r^{-Q}\big)}\sup_{t>r}\frac{1}{\mu(B(x_0,t))}\left\|b(\cdot)-b_{B(x_0,t)}\right\|_{L_{\widetilde{\Phi}}(B(x_0,t))} \|f\|_{L^\Phi(B(x_0,t))}
\\
& \quad + \frac{1}{\Phi^{-1}\big(r^{-Q}\big)} \sup_{t>r}\frac{1}{\mu(B(x_0,t))}|b_{B(x_0,r)}-b_{B(x_0,t)}|\mu(B(x_0,t))\Phi^{-1}\left(t^{-Q}\right)\|f\|_{L^\Phi(B(x_0,t))}
\\
& \lesssim \frac{\|b\|_{*}}{\Phi^{-1}\big(r^{-Q}\big)}
\sup_{t>2r}\Phi^{-1}\left(t^{-Q}\right)\Big(1+\ln \frac{t}{r}\Big)
\|f\|_{L^\Phi(B(x_0,t))}.
\end{align*}

For $J_2$  we obtain
\begin{align*}
J_2 & \thickapprox
\left\|b(\cdot)-b_{B}\right\|_{L^\Phi(B)}\sup_{t>r}\frac{1}{\mu(B(x_0,t))}\int_{B(x_0,t)}|f(y)|d\mu(y)
\\
&\lesssim \frac{\|b\|_{*}}{\Phi^{-1}\big(r^{-Q}\big)}\sup_{t>r}\Phi^{-1}\big(t^{-Q}\big)\|f\|_{L^\Phi(B(x_0,t))}
\end{align*}
gathering the estimates for  $J_1$ and $J_2,$ we get
\begin{equation} \label{deckfVgr}
\|M_{b}f_2\|_{L^\Phi(B)}
\lesssim \frac{\|b\|_{*}}{\Phi^{-1}\big(r^{-Q}\big)}
\sup_{t>r}\Phi^{-1}\big(t^{-Q}\big)\Big(1+\ln \frac{t}{r}\Big)
\|f\|_{L^\Phi(B(x_0,t))}.
\end{equation}
By using \eqref{dsaad} we unite \eqref{deckfVgr}  with \eqref{ffiir},
which  completes the proof.
\end{proof}

\begin{thm}\label{thm4.4.maxcom}
Let $\Phi$ be a Young function with $\Phi\in\Delta_2\cap\nabla_2$, $b\in BMO(X)$ and the functions $\varphi_1,\varphi_2$ and $ \Phi$ satisfy the condition
\begin{equation}\label{bounmaxcom}
\sup_{r<t<\infty} \Big(1+\ln \frac{t}{r}\Big)\Phi^{-1}\big(t^{-Q}\big) \es_{t<s<\i}\frac{\varphi_1(s)}{\Phi^{-1}\big(s^{-Q}\big)} \le C \, \varphi_2(r),
\end{equation}
where $C$ does not depend on $r$. Then the operator $M_{b}$ is bounded from $M_{\Phi,\varphi_1}(X)$ to $M_{\Phi,\varphi_2}(X)$.
\end{thm}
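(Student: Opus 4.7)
The plan is to follow exactly the template used for Theorem \ref{thm4.4.max}, with the local pointwise-type $L_\Phi$-norm estimate coming from Lemma \ref{lem5.1.com} in place of Lemma \ref{lem4.2.}. The only new ingredient compared to the maximal case is the extra logarithmic factor $1+\ln(t/r)$, which is precisely what is absorbed by the stronger supremum condition \eqref{bounmaxcom}.

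First I would fix $x_0 \in X$ and $r>0$ and apply Lemma \ref{lem5.1.com} to get
\begin{equation*}
\Phi^{-1}(r^{-Q})\,\|M_{b}f\|_{L_{\Phi}(B(x_0,r))} \lesssim \|b\|_{\ast}\, \sup_{t>r} \Bigl(1+\ln\tfrac{t}{r}\Bigr)\Phi^{-1}(t^{-Q})\,\|f\|_{L_{\Phi}(B(x_0,t))}.
\end{equation*}
Dividing by $\varphi_2(r)$ and taking the supremum over $x_0$ and $r$ converts the left-hand side into $\|M_{b}f\|_{M_{\Phi,\varphi_2}}$, so the task reduces to estimating
\begin{equation*}
\sup_{x_0,\,r} \frac{1}{\varphi_2(r)}\sup_{t>r} \Bigl(1+\ln\tfrac{t}{r}\Bigr)\Phi^{-1}(t^{-Q})\,\|f\|_{L_{\Phi}(B(x_0,t))} \lesssim \|f\|_{M_{\Phi,\varphi_1}}.
\end{equation*}

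Next, I would insert the same ``ess inf trick'' used in the proof of Theorem \ref{thm4.4.max}: multiply and divide the inner expression by $\es_{t<s<\infty}\varphi_1(s)/\Phi^{-1}(s^{-Q})$. Writing the essential infimum as an essential supremum of the reciprocal and using that $\|f\|_{L_{\Phi}(B(x_0,t))}$ is nondecreasing in $t$, one obtains the pointwise bound
\begin{equation*}
\frac{\|f\|_{L_{\Phi}(B(x_0,t))}}{\es_{t<s<\infty}\frac{\varphi_1(s)}{\Phi^{-1}(s^{-Q})}} \le \sup_{s>0,\,y\in X} \frac{\Phi^{-1}(s^{-Q})\|f\|_{L_{\Phi}(B(y,s))}}{\varphi_1(s)} = \|f\|_{M_{\Phi,\varphi_1}},
\end{equation*}
exactly as in Theorem \ref{thm4.4.max}. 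This leaves a factor
\begin{equation*}
\sup_{t>r} \Bigl(1+\ln\tfrac{t}{r}\Bigr)\Phi^{-1}(t^{-Q})\,\es_{t<s<\infty} \frac{\varphi_1(s)}{\Phi^{-1}(s^{-Q})},
\end{equation*}
which by hypothesis \eqref{bounmaxcom} is bounded by $C\,\varphi_2(r)$. Dividing by $\varphi_2(r)$ gives a uniform bound, and combining everything yields $\|M_{b}f\|_{M_{\Phi,\varphi_2}} \lesssim \|b\|_{\ast}\|f\|_{M_{\Phi,\varphi_1}}$, which completes the argument.

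There is essentially no hard step here: once Lemma \ref{lem5.1.com} is granted, the proof is a routine modification of the argument for Theorem \ref{thm4.4.max}, with $(1+\ln(t/r))$ appearing on both sides of condition \eqref{bounmaxcom}. The only point that needs mild care is that the $\nabla_2$ assumption used in Theorem \ref{thm4.4.max} enters here through $\Phi \in \Delta_2 \cap \nabla_2$, which is already available and is needed so that both Theorem \ref{TinOrlicz} (via Lemma \ref{lem5.1.com}) and Lemma \ref{Bmo-orlicz} can be invoked. Hence no separate treatment of weak-type endpoints is required, and a single strong-type estimate $M_{\Phi,\varphi_1}\to M_{\Phi,\varphi_2}$ is obtained.
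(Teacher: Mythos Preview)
Your proposal is correct and follows exactly the approach indicated in the paper, which simply states that the proof is similar to that of Theorem \ref{thm4.4.max} thanks to Lemma \ref{lem5.1.com}. You have spelled out precisely this adaptation: replace the local estimate of Lemma \ref{lem4.2.} by that of Lemma \ref{lem5.1.com}, run the same ess inf trick, and absorb the extra $(1+\ln(t/r))$ factor via \eqref{bounmaxcom}.
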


\begin{proof}
The proof is similar to the proof of Theorem \ref{thm4.4.max} thanks to Lemma \ref{lem5.1.com}.
\end{proof}

\section{Fractional integral and its commutators in Orlicz spaces}

For a $Q$-homogeneous space $(X,d,\mu)$, let
$$
I_{\alpha} f(x)=\int_{X}\frac{f(y)}{d(x,y)^{Q-\alpha} }d\mu(y),\qquad 0<\a<Q.
$$

For proving our main results, we need the following estimate.

\begin{lem}\label{estRsz}
If $B_0:=B(x_0,r_0)$, then $r_0^{\alpha}\leq C I_{\a} \chi_{B_0}(x)$ for every $x\in B_0$.
\end{lem}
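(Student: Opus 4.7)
The plan is direct: bound the integrand $d(x,y)^{\alpha-Q}$ from below uniformly on $B_0$, then integrate. For $x\in B_0$ and $y\in B_0$, the quasi-triangle inequality \eqref{triinq} gives
\[
d(x,y)\leq K_1\bigl(d(x,x_0)+d(x_0,y)\bigr)<2K_1 r_0.
\]
Since $0<\alpha<Q$, the function $t\mapsto t^{\alpha-Q}$ is decreasing, so $d(x,y)^{\alpha-Q}\geq (2K_1 r_0)^{\alpha-Q}$ for all $y\in B_0$.

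Next, I would restrict the integration in the definition of $I_\alpha\chi_{B_0}(x)$ to $B_0$ (all terms are nonnegative), producing
\[
I_\alpha\chi_{B_0}(x)=\int_{B_0} d(x,y)^{\alpha-Q}\,d\mu(y)\geq (2K_1 r_0)^{\alpha-Q}\,\mu(B_0).
\]

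Finally, I would apply the lower bound in the $Q$-homogeneity assumption \eqref{Qhomogeneous}, namely $\mu(B_0)\geq K_3^{-1}r_0^Q$, to obtain
\[
I_\alpha\chi_{B_0}(x)\geq (2K_1)^{\alpha-Q}K_3^{-1}\, r_0^{\alpha},
\]
which yields the stated inequality with $C^{-1}=(2K_1)^{\alpha-Q}K_3^{-1}$.

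There is essentially no obstacle here; the only point that requires care is tracking the quasi-metric constant $K_1$ when applying \eqref{triinq} (one gets $2K_1 r_0$ rather than $2r_0$), but this affects only the implicit constant and not the power of $r_0$. No use of the doubling condition \eqref{doubl1} or the upper bound in \eqref{Qhomogeneous} is needed for this one-sided estimate.
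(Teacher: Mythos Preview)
Your proof is correct and follows essentially the same route as the paper's own argument: bound $d(x,y)<2K_1 r_0$ via the quasi-triangle inequality, use $\alpha-Q<0$ to get a uniform lower bound on $d(x,y)^{\alpha-Q}$, and then invoke the $Q$-homogeneity lower bound on $\mu(B_0)$. The only difference is cosmetic---you track the constants $K_1$ and $K_3$ explicitly, whereas the paper absorbs them into a generic $C$.
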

\begin{proof}
If $x,y\in B_0$, then $d(x,y)\leq k(d(x,x_0)+d(y,x_0))<2kr_0$. Since $0<\a<Q$, we get $r_0^{\a-Q}\leq C d(x,y)^{\a-Q}$. Therefore
\begin{align*}
I_{\a} \chi_{B_0}(x)= \int _{B_0} d(x,y)^{\a-Q}d\mu(y) \geq C r_0^{\a-Q} \mu(B_0) = C r_0^{\a}.
\end{align*}
\end{proof}

The known boundedness statement for $I_{\a}$ in Orlicz spaces on spaces of homogeneous type runs as follows.
\begin{thm}\label{bounFrMaxOrl} \cite{Nakai-Hom}
Let $(X,d,\mu)$ be $Q-$homogeneous and $\Phi, \Psi\in\mathcal{Y}$. Assume that there exist constants $A,A^\prime>0$ such that
\begin{equation}\label{cond1}
\int_{r}^{\infty}t^{\alpha-1}\Phi^{-1}\left(t^{-Q}\right)dt\leq A r^\a\Phi^{-1}\left(r^{-Q}\right)\qquad \text{for  }0<r<\infty,
\end{equation}
\begin{equation}\label{cond2}
r^\a \Phi^{-1}\left(r^{-Q}\right)\leq A^\prime \Psi^{-1}\left(r^{-Q}\right)\qquad \text{for  }0<r<\i.
\end{equation}
Then $I_{\a}$ is bounded from $L_{\Phi}(X)$ to $WL_{\Psi}(X)$. Moreover, if $\Phi\in\nabla_2,$ then $I_{\a}$ is bounded from $L_{\Phi}(X)$ to $L_{\Psi}(X)$.
\end{thm}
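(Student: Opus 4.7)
The plan is to combine a Hedberg-type pointwise estimate with optimization of a free parameter, exploiting both hypotheses \eqref{cond1}--\eqref{cond2} and the $L_\Phi$ mapping properties of the maximal operator from Theorem \ref{maxorl}. For each $x \in X$ and each $r>0$ I split
$$I_\alpha f(x) = \int_{d(x,y)<r} \frac{f(y)}{d(x,y)^{Q-\alpha}}\, d\mu(y) + \int_{d(x,y)\geq r} \frac{f(y)}{d(x,y)^{Q-\alpha}}\, d\mu(y) =: A_r(x) + B_r(x).$$
The near part is decomposed into dyadic annuli $\{2^{-k-1} r \leq d(x,y) < 2^{-k} r\}$: on each annulus I bound $d(x,y)^{\alpha - Q}$ at the inner radius and replace $\int |f|$ by $\mu(B(x, 2^{-k}r))\, Mf(x) \lesssim (2^{-k} r)^Q Mf(x)$ using \eqref{Qhomogeneous}; summation of the resulting geometric series (convergent because $\alpha>0$) yields $|A_r(x)| \lesssim r^\alpha Mf(x)$.

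For the far part $B_r(x)$, I again use dyadic annuli $\{2^k r \leq d(x,y) < 2^{k+1} r\}$ and apply Lemma \ref{lemHold} on $B(x, 2^{k+1} r)$, giving
$$\int_{B(x, 2^{k+1}r)} |f(y)|\, d\mu(y) \lesssim (2^k r)^Q \Phi^{-1}\big((2^k r)^{-Q}\big)\, \|f\|_{L_\Phi}.$$
After multiplication by $(2^k r)^{\alpha - Q}$ and summation, the sublinearity $\Phi^{-1}(2s) \leq 2\Phi^{-1}(s)$ coming from \eqref{convconc} makes each dyadic term comparable to the integral $\int_{2^k r}^{2^{k+1} r} t^{\alpha - 1} \Phi^{-1}(t^{-Q})\, dt$, and condition \eqref{cond1} telescopes the whole sum into $r^\alpha \Phi^{-1}(r^{-Q}) \|f\|_{L_\Phi}$. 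Combining the two halves,
$$|I_\alpha f(x)| \leq C_1 r^\alpha Mf(x) + C_2 r^\alpha \Phi^{-1}(r^{-Q}) \|f\|_{L_\Phi}.$$

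I then optimize in $r$. Assuming $f \not\equiv 0$ and $0 < Mf(x) < \infty$, and letting $C_M$ denote the relevant (strong- or weak-type) constant of $M$ on $L_\Phi(X)$ from Theorem \ref{maxorl}, I set $r^{-Q} := \Phi\big(Mf(x)/(C_M \|f\|_{L_\Phi})\big)$ so that $\Phi^{-1}(r^{-Q}) = Mf(x)/(C_M \|f\|_{L_\Phi})$. Hypothesis \eqref{cond2} converts the factor $r^\alpha \Phi^{-1}(r^{-Q})$ into $A'\Psi^{-1}(r^{-Q})$, giving
$$|I_\alpha f(x)| \leq C \|f\|_{L_\Phi}\, \Psi^{-1}\!\big(\Phi(Mf(x)/(C_M \|f\|_{L_\Phi}))\big),$$
hence, applying $\Psi$ and using $\Psi(\Psi^{-1}(s)) \leq s$,
$$\Psi\big(|I_\alpha f(x)|/(C \|f\|_{L_\Phi})\big) \leq \Phi\big(Mf(x)/(C_M \|f\|_{L_\Phi})\big).$$
Integration yields $\|I_\alpha f\|_{L_\Psi} \leq C \|f\|_{L_\Phi}$ when $\Phi \in \nabla_2$ (since then $M$ is strong-type $(L_\Phi, L_\Phi)$ by Theorem \ref{maxorl}), and the same pointwise inequality rewritten in terms of distribution functions transfers the weak-type modular bound for $M$ into the analogous weak-type bound of $I_\alpha$ from $L_\Phi(X)$ into $WL_\Psi(X)$ in the general case $\Phi \in \mathcal{Y}$.

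The main obstacle is comparing the discrete dyadic sum in the far part with the continuous integral in \eqref{cond1}: it relies on the quasi-monotone behaviour of $t \mapsto \Phi^{-1}(t^{-Q})$ on dyadic scales supplied by \eqref{convconc}. Once that bookkeeping is in place the optimization step is essentially algebraic, and the passage between the strong and weak conclusions only reflects the difference between integrating $\Psi(\cdot)$ and taking $\sup_{t>0} \Psi(t/\lambda)\, d_{\cdot}(t)$.
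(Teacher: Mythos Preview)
The paper does not prove this theorem at all: it is quoted verbatim from Nakai \cite{Nakai-Hom} and used as a black box, so there is no in-paper argument to compare against. Your proposal is the standard Hedberg-type proof and is correct as written; the pointwise splitting, the dyadic estimate of the far part via Lemma~\ref{lemHold} and condition \eqref{cond1}, the optimization in $r$ using \eqref{cond2}, and the transfer of the strong/weak $L_\Phi$ bound for $M$ from Theorem~\ref{maxorl} to $I_\alpha$ all go through exactly as you describe. One small remark: the edge cases $Mf(x)=0$ (which forces $f\equiv 0$) and $Mf(x)=\infty$ (a null set since $f\in L_\Phi\subset L_1^{\rm loc}$ and $M$ is weak-type) are harmless, and for $\Phi\in\mathcal{Y}$ the map $r\mapsto\Phi^{-1}(r^{-Q})$ is a bijection of $(0,\infty)$ onto itself, so the optimizing $r$ always exists.
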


\begin{thm}\label{bounFrMaxOrlnec}
Let $(X,d,\mu)$ be $Q-$homogeneous and $\Phi, \Psi\in\mathcal{Y}$. Assume that $I_{\a}$ is bounded from $L_{\Phi}(X)$ to $WL_{\Psi}(X)$ then condition
\eqref{cond2} holds.
\end{thm}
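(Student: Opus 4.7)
The plan is to test the boundedness hypothesis on characteristic functions of balls. Fix $x_0 \in X$ and $r_0 > 0$ and set $B_0 = B(x_0, r_0)$, $f = \chi_{B_0}$. The heavy lifting is already done by Lemma \ref{estRsz}, which gives the pointwise lower bound $I_\alpha \chi_{B_0}(x) \ge C r_0^\alpha$ for every $x \in B_0$; this lets us convert information about $I_\alpha$ on a ball into a distribution function estimate on the single level $\tau = C r_0^\alpha$.

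First, I would exploit this lower bound to see that for any $0 < \tau < C r_0^\alpha$ we have $\{x : |I_\alpha \chi_{B_0}(x)| > \tau\} \supseteq B_0$, hence $d_{I_\alpha \chi_{B_0}}(\tau) \ge \mu(B_0)$. Plugging this into \eqref{worlpr} applied to $I_\alpha \chi_{B_0}$ yields
$$
\Psi\!\left(\frac{\tau}{\|I_\alpha \chi_{B_0}\|_{WL_\Psi}}\right) \mu(B_0) \le 1
\qquad \text{for all } 0 < \tau < C r_0^\alpha.
$$
Letting $\tau \nearrow C r_0^\alpha$ (using left-continuity of $\Psi$) and applying $\Psi^{-1}$, together with $\Psi(\Psi^{-1}(s)) \le s \le \Psi^{-1}(\Psi(s))$, gives
$$
C r_0^\alpha \le \Psi^{-1}(\mu(B_0)^{-1})\, \|I_\alpha \chi_{B_0}\|_{WL_\Psi}.
$$

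Next, I would invoke the assumed boundedness $I_\alpha : L_\Phi(X) \to WL_\Psi(X)$ together with the identity \eqref{normofcharac}, which gives $\|\chi_{B_0}\|_{L_\Phi} = 1/\Phi^{-1}(\mu(B_0)^{-1})$. Combining this with the previous display produces
$$
r_0^\alpha\, \Phi^{-1}(\mu(B_0)^{-1}) \le C'\, \Psi^{-1}(\mu(B_0)^{-1}).
$$
Finally, since $(X,d,\mu)$ is $Q$-homogeneous, \eqref{Qhomogeneous} yields $\mu(B_0)^{-1} \approx r_0^{-Q}$, and the quasi-monotonicity \eqref{convconc} of $\Phi^{-1}$ and $\Psi^{-1}$ transfers the estimate to $r_0^{-Q}$ on both sides, yielding \eqref{cond2} with constant $A'$ independent of $r_0$.

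The argument is essentially the standard necessity test, and the only place that requires care is the passage from the weak-type inequality to a pointwise estimate on $\Psi^{-1}$: one must handle the (left-)continuity subtlety at $\tau = C r_0^\alpha$ and also the fact that $\Psi^{-1}$ is only a pseudo-inverse, which is what forces the use of $\Psi^{-1}(\Psi(r)) \ge r$ rather than equality. No further delicate homogeneous-space machinery is needed, since Lemma \ref{estRsz} has already encapsulated the necessary interaction between the kernel $d(x,y)^{\alpha-Q}$ and the doubling structure.
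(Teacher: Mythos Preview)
Your proposal is correct and follows essentially the same route as the paper: both test the boundedness on $\chi_{B_0}$, invoke Lemma~\ref{estRsz} for the pointwise lower bound, apply the assumed $L_\Phi\to WL_\Psi$ bound together with \eqref{normofcharac}, and then pass from $\mu(B_0)^{-1}$ to $r_0^{-Q}$ via $Q$-homogeneity. The only difference is cosmetic: where you unpack the weak-Orlicz inequality through the distribution function and \eqref{worlpr}, the paper simply writes $r_0^{\alpha}\lesssim \Psi^{-1}(r_0^{-Q})\|I_{\a}\chi_{B_0}\|_{WL_{\Psi}(B_0)}$ in one step by combining the pointwise bound with \eqref{normofcharac} for the weak norm of $\chi_{B_0}$.
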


\begin{proof}
Let $B_0=B(x_0,r_0)$ and $x\in B_0$. By \eqref{Qhomogeneous} and Lemmas \ref{estRsz} and \ref{normofcharac}, we have
\begin{align*}
r_0^{\alpha}&\lesssim \Psi^{-1}(r_{0}^{-Q})\|I_{\a} \chi_{B_0}\|_{WL_{\Psi}(B_0)} \lesssim \Psi^{-1}(r_{0}^{-Q})\|I_{\a} \chi_{B_0}\|_{WL_{\Psi}}
\\
&\lesssim \Psi^{-1}(r_{0}^{-Q})\|\chi_{B_0}\|_{L_{\Phi}}\lesssim \frac{\Psi^{-1}(r_{0}^{-Q})}{\Phi^{-1}(r_{0}^{-Q})}.
\end{align*}
Since this is true for every $r_0>0$, we are done.
\end{proof}

Combining Theorems \ref{bounFrMaxOrl} and \ref{bounFrMaxOrlnec} we have the following result.
\begin{thm}
Let $(X,d,\mu)$ be $Q-$homogeneous and $\Phi, \Psi\in\mathcal{Y}$. If \eqref{cond1} holds, then the condition \eqref{cond2}
is necessary and sufficient for the boundedness of $I_{\a}$ from $L_{\Phi}(X)$ to $WL_{\Psi}(X)$. Moreover, if $\Phi\in\nabla_2,$ the condition \eqref{cond2} is necessary and sufficient for the boundedness of $I_{\a}$ from $L_{\Phi}(X)$ to $L_{\Psi}(X)$.
\end{thm}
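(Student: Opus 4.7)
The plan is to package the two preceding theorems; no new argument is required, so the proof will be short.

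First, for the sufficiency direction, I would observe that when both \eqref{cond1} and \eqref{cond2} hold, Theorem \ref{bounFrMaxOrl} directly delivers the boundedness of $I_\a$ from $L_\Phi(X)$ to $WL_\Psi(X)$, and upgrades this to boundedness from $L_\Phi(X)$ to $L_\Psi(X)$ under the additional hypothesis $\Phi \in \nabla_2$. Nothing more is needed on this side.

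Second, for the necessity direction, I would invoke Theorem \ref{bounFrMaxOrlnec}. In the weak case this is just a restatement of that theorem. For the strong case, I would first record the trivial embedding $\|g\|_{WL_\Psi} \le \|g\|_{L_\Psi}$ (which follows from Chebyshev's inequality applied inside the Luxemburg norm), so that boundedness $L_\Phi \to L_\Psi$ automatically entails boundedness $L_\Phi \to WL_\Psi$, and then appeal once more to Theorem \ref{bounFrMaxOrlnec} to conclude \eqref{cond2}. Note that \eqref{cond1} is not used in the necessity part; it enters only through the sufficiency supplied by Theorem \ref{bounFrMaxOrl}.

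Since both directions are direct invocations of results already established in the excerpt, there is essentially no obstacle to overcome. The only point worth stating explicitly is the embedding $L_\Psi(X) \hookrightarrow WL_\Psi(X)$ used when deducing \eqref{cond2} from strong-type boundedness; this lets me avoid repeating the test-function argument with $\chi_{B_0}$ and Lemma \ref{estRsz} in the strong setting, and keeps the proof to a few lines.
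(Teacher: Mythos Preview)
Your proposal is correct and matches the paper's approach exactly: the paper states this theorem with the one-line justification ``Combining Theorems \ref{bounFrMaxOrl} and \ref{bounFrMaxOrlnec} we have the following result'' and gives no further proof. Your write-up is in fact more explicit than the paper's, since you spell out the embedding $L_\Psi(X)\hookrightarrow WL_\Psi(X)$ needed to reduce the strong-type necessity to Theorem \ref{bounFrMaxOrlnec}.
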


The commutators generated by  $b\in  L^1_{\rm loc}(X)$ and the operator $I_{\a}$ are defined by
\begin{equation*}
[b,I_{\a}]f(x)=\int _{X}\frac{b(x)-b(y)}{d(x,y)^{Q-\a}}f(y)d\mu(y), \qquad 0<\a<Q.
\end{equation*}
The operator $|b,I_{\a}|$ is defined by
\begin{equation*}
|b,I_{\a}|f(x)=\int _{X}\frac{|b(x)-b(y)|}{d(x,y)^{Q-\a}}f(y)d\mu(y), \qquad 0<\a<Q.
\end{equation*}

The following lemma is the analogue of the Hedberg's trick for $[b, I_\a]$.
\begin{lem}\label{AnalgHedCom}
If $(X,d,\mu)$ be $Q-$homogeneous, $0<\alpha<Q$ and $f, b\in  L^1_{\rm loc}(X)$, then for all $x \in X$ and $r>0$ we get
\begin{equation*}
\int_{B(x,r)}\frac{|f(y)|}{d(x,y)^{Q-\alpha}}|b(x)-b(y)|d\mu(y)\lesssim r^{\a} M_{b}f(x).
\end{equation*}
\end{lem}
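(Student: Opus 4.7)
The plan is to prove this by the standard dyadic annular decomposition that underlies the Hedberg inequality, adapted to the $Q$-homogeneous setting.

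First, I would decompose the ball $B(x,r)$ into the dyadic annuli
$$
A_k := \{y \in X : 2^{-k-1}r \le d(x,y) < 2^{-k}r\}, \qquad k = 0,1,2,\ldots,
$$
so that $B(x,r)\setminus\{x\} = \bigsqcup_{k\ge 0} A_k$ (up to a set of measure zero, since $\mu(\{x\})=0$ in the relevant $Q$-homogeneous setup). On $A_k$ we have the two-sided comparison $d(x,y) \approx 2^{-k}r$, hence $d(x,y)^{\alpha-Q} \lesssim (2^{-k}r)^{\alpha-Q}$, and moreover $A_k \subset B(x,2^{-k}r)$.

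Next, using the $Q$-homogeneity \eqref{Qhomogeneous}, $\mu(B(x,2^{-k}r)) \approx (2^{-k}r)^Q$, I would estimate the contribution from each annulus by
\begin{align*}
\int_{A_k}\frac{|f(y)||b(x)-b(y)|}{d(x,y)^{Q-\alpha}}\,d\mu(y)
&\lesssim (2^{-k}r)^{\alpha-Q}\int_{B(x,2^{-k}r)}|b(x)-b(y)||f(y)|\,d\mu(y) \\
&\lesssim (2^{-k}r)^{\alpha}\,\frac{1}{\mu(B(x,2^{-k}r))}\int_{B(x,2^{-k}r)}|b(x)-b(y)||f(y)|\,d\mu(y) \\
&\le (2^{-k}r)^{\alpha}\, M_b f(x),
\end{align*}
where the last inequality just recognizes that the averaged integral is bounded by the definition of $M_b f(x)$.

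Finally, I would sum over $k\ge 0$. Since $\alpha>0$, the geometric series $\sum_{k\ge 0} 2^{-k\alpha}$ converges, and I get
$$
\int_{B(x,r)}\frac{|f(y)||b(x)-b(y)|}{d(x,y)^{Q-\alpha}}\,d\mu(y)
\le \sum_{k=0}^{\infty}(2^{-k}r)^{\alpha}\,M_b f(x)
= r^{\alpha} M_b f(x)\sum_{k=0}^{\infty} 2^{-k\alpha}
\lesssim r^{\alpha} M_b f(x),
$$
which is the desired inequality. There is no real obstacle here: the only delicate points are the bookkeeping with the quasi-metric (the constant $K_1$ is absorbed into the $\lesssim$ signs, since the annuli definition uses only the function $d(x,\cdot)$ and not the triangle inequality) and remembering that convergence of the sum requires $\alpha>0$, which is part of the hypothesis.
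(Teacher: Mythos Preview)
Your proof is correct and follows essentially the same approach as the paper's own proof: both decompose $B(x,r)$ into the dyadic annuli $\{2^{-k-1}r\le d(x,y)<2^{-k}r\}$, replace $d(x,y)^{\alpha-Q}$ by $(2^{-k}r)^{\alpha-Q}$, use $Q$-homogeneity to convert $(2^{-k}r)^{-Q}$ into $\mu(B(x,2^{-k}r))^{-1}$ and thereby recognize the commutator maximal average, and then sum the resulting geometric series $\sum_k 2^{-k\alpha}$. Your write-up is somewhat more detailed (you make explicit the role of $\alpha>0$ and of $Q$-homogeneity), but the argument is the same.
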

\begin{proof}
\begin{align*}
  &\int_{B(x,r)}\frac{|f(y)|}{d(x,y)^{Q-\alpha}}|b(x)-b(y)|d\mu(y)  = \sum_{j=0}^{\infty}\int_{2^{-j-1}r \le d(x,y) < 2^{-j}r} \frac{|f(y)|}{d(x,y)^{Q-\alpha}}|b(x)-b(y)|d\mu(y) \\
  & \lesssim \sum_{j=0}^{\infty} (2^{-j}r)^{\a}(2^{-j}r)^{-Q}\int_{d(x,y)< 2^{-j}r} |f(y)||b(x)-b(y)|d\mu(y)  \lesssim r^{\a} M_{b}f(x).
\end{align*}
\end{proof}

\begin{lem}\label{estRszCom}
If $b\in  L^1_{\rm loc}(X)$ and $B_0:=B(x_0,r_0)$, then
$$
r_0^{\alpha}|b(x)-b_{B_0}|\leq C |b,I_{\a}| \chi_{B_0}(x)
$$
for every $x\in B_0$.
\end{lem}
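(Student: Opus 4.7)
The plan is to restrict the defining integral for $|b,I_{\alpha}|\chi_{B_0}(x)$ to $B_0$, use the quasi-triangle inequality to lower-bound $d(x,y)^{\alpha-Q}$ on $B_0\times B_0$, and then recognize that the remaining integral already controls $|b(x)-b_{B_0}|$ by Jensen's inequality.

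First I would observe that, since $\chi_{B_0}$ is supported in $B_0$,
$$|b,I_{\alpha}|\chi_{B_0}(x)=\int_{B_0}\frac{|b(x)-b(y)|}{d(x,y)^{Q-\alpha}}\,d\mu(y).$$
Next, for $x,y\in B_0$ the quasi-triangle inequality \eqref{triinq} gives $d(x,y)\leq K_1(d(x,x_0)+d(x_0,y))<2K_1 r_0$; since $0<\alpha<Q$ this yields
$$\frac{1}{d(x,y)^{Q-\alpha}}\geq \frac{1}{(2K_1 r_0)^{Q-\alpha}}\gtrsim r_0^{\alpha-Q},$$
exactly as in the proof of Lemma \ref{estRsz}.

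Then I would bring the absolute value outside the mean using
$$|b(x)-b_{B_0}|=\left|\frac{1}{\mu(B_0)}\int_{B_0}(b(x)-b(y))\,d\mu(y)\right|\leq \frac{1}{\mu(B_0)}\int_{B_0}|b(x)-b(y)|\,d\mu(y),$$
so that
$$|b,I_{\alpha}|\chi_{B_0}(x)\gtrsim r_0^{\alpha-Q}\int_{B_0}|b(x)-b(y)|\,d\mu(y)\geq r_0^{\alpha-Q}\mu(B_0)\,|b(x)-b_{B_0}|.$$
Finally, the $Q$-homogeneity assumption \eqref{Qhomogeneous} gives $\mu(B_0)\geq K_3^{-1}r_0^{Q}$, and combining these estimates produces the claimed bound $r_0^{\alpha}|b(x)-b_{B_0}|\leq C\,|b,I_{\alpha}|\chi_{B_0}(x)$ for every $x\in B_0$.

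There is essentially no obstacle here: the argument parallels Lemma \ref{estRsz} verbatim, with the only new ingredient being the trivial Jensen-type bound for $|b(x)-b_{B_0}|$. Care is only needed to keep track of the constant $K_1$ from \eqref{triinq} and to note that the lower bound $d(x,y)\lesssim r_0$ on $B_0\times B_0$ is sharp enough because $\alpha<Q$ makes $d(x,y)^{\alpha-Q}$ a decreasing function of the distance.
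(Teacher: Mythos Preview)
Your argument is correct and is precisely what the paper intends: its proof merely says ``similar to the proof of Lemma~\ref{estRsz},'' and you have spelled out exactly those details, together with the one additional (and obvious) step of bounding $|b(x)-b_{B_0}|$ by the average $\mu(B_0)^{-1}\int_{B_0}|b(x)-b(y)|\,d\mu(y)$.
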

\begin{proof}
The proof is similar to the proof of Theorem \ref{estRsz}.
\end{proof}

\begin{thm} \label{AdamsCommRieszCharOrl}
Let $(X,d,\mu)$ be $Q-$homogeneous, $0<\a<Q$, $b\in BMO(X)$ and $\Phi,\Psi\in\mathcal{Y}$.

$1.~$ If $\Phi\in \nabla_2$ and $\Psi\in \Delta_2$, then the condition
\begin{equation}\label{adRieszCommCharOrl1}
r^{\alpha}\Phi^{-1}\big(r^{-Q}\big) + \int_{r}^{\infty} \Big(1+\ln \frac{t}{r}\Big) \Phi^{-1}\big(t^{-Q}\big)t^{\alpha}\frac{dt}{t} \le C \Psi^{-1}\big(r^{-Q}\big)
\end{equation}
for all $r>0$, where $C>0$ does not depend on $r$, is sufficient for the boundedness of $[b,I_{\a}]$ from $L_{\Phi}(X)$ to $L_{\Psi}(X)$.

$2.~$ If $\Psi\in \Delta_2$, then the condition \eqref{cond2} is necessary for the boundedness of $|b,I_{\a}|$ from $L_{\Phi}(X)$ to $L_{\Psi}(X)$.

$3.~$ Let $\Phi\in \nabla_2$ and $\Psi\in \Delta_2$. If the condition
\begin{equation}\label{adRieszCommCharOrl3}
\int_{r}^{\infty} \Big(1+\ln \frac{t}{r}\Big) \Phi^{-1}\big(t^{-Q}\big) t^{\alpha}\frac{dt}{t} \le C r^{\alpha} \Phi^{-1}\big(r^{-Q}\big)
\end{equation}
holds for all $r>0$, where $C>0$ does not depend on $r$, then the condition \eqref{cond2}
is necessary and sufficient for the boundedness of $|b,I_{\a}|$ from $L_{\Phi}(X)$ to $L_{\Psi}(X)$.
\end{thm}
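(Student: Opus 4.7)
The plan is to prove the three parts separately, with Part 3 reducing formally to Parts 1 and 2. For Part 1 (sufficiency), I would derive a pointwise estimate for $[b, I_\alpha] f(x)$ and then pass to the $L_{\Psi}$-norm by a Hedberg-type optimization. First I would split the defining integral at $d(x,y) = r$ for a free parameter $r > 0$: the ball integral is handled directly by Lemma \ref{AnalgHedCom}, giving $\lesssim r^\alpha M_b f(x)$. For the complement I would decompose dyadically into annuli $\{2^j r \leq d(x,y) < 2^{j+1} r\}$, write $|b(x) - b(y)| \leq |b(x) - b_{B(x,2^{j+1}r)}| + |b(y) - b_{B(x,2^{j+1}r)}|$, and estimate each piece via Lemma \ref{lemHold} together with Lemma \ref{Bmo-orlicz} applied to the complementary Young function $\widetilde{\Phi}$ (which belongs to $\Delta_2$ because $\Phi \in \nabla_2$). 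Using \eqref{propBMO} to absorb the centered oscillations as $|b(x) - b_{B(x,r)}| + C(1+j)\|b\|_{\ast}$, and replacing the dyadic sums by integrals, the condition \eqref{adRieszCommCharOrl1} yields
\begin{equation*}
|[b, I_\alpha] f(x)| \lesssim r^\alpha M_b f(x) + \bigl(|b(x) - b_{B(x,r)}| + \|b\|_{\ast}\bigr) \Psi^{-1}(r^{-Q}) \|f\|_{L_{\Phi}}.
\end{equation*}

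To pass to the $L_{\Psi}$-norm I would then choose $r = r(x)$ to balance the two main terms; the embedded inequality $r^\alpha \Phi^{-1}(r^{-Q}) \leq C \Psi^{-1}(r^{-Q})$, which is automatic from \eqref{adRieszCommCharOrl1}, converts the $r^\alpha M_b f$ contribution to the scale $\Psi^{-1}(r(x)^{-Q})$, while the residual pointwise BMO factor $|b(x) - b_{B(x, r(x))}|$ is controlled through Lemma \ref{Bmo-orlicz} applied to $\Psi$ (using $\Psi \in \Delta_2$). Piecing the two contributions together and invoking the $L_\Phi$-boundedness of $M$ (Theorem \ref{maxorl}, valid since $\Phi \in \nabla_2$) should yield $\|[b, I_\alpha] f\|_{L_{\Psi}} \lesssim \|b\|_{\ast} \|f\|_{L_{\Phi}}$.

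For Part 2 (necessity of \eqref{cond2}) I would fix a reference point $x^{\ast} \in X$ and work with the nontrivial BMO function $b_0(y) := \log d(y, x^{\ast})$. For each $r_0 > 0$ applying Lemma \ref{estRszCom} with $B_0 := B(x^{\ast}, r_0)$ yields $r_0^\alpha |b_0(x) - (b_0)_{B_0}| \leq C\,|b_0, I_\alpha|\chi_{B_0}(x)$ on $B_0$; taking the $L_{\Psi}(B_0)$-norm and invoking the assumed boundedness of $|b_0, I_\alpha|: L_{\Phi} \to L_{\Psi}$ together with \eqref{normofcharac} produces
\begin{equation*}
r_0^\alpha \|b_0 - (b_0)_{B_0}\|_{L_{\Psi}(B_0)} \lesssim \Phi^{-1}(r_0^{-Q})^{-1}.
\end{equation*}
A direct dyadic computation on the shells of $B_0$ shows that $\mu\{y \in B_0 : |b_0(y) - (b_0)_{B_0}| \geq 1\} \gtrsim r_0^Q$, and then the elementary bound $\|g\|_{L_{\Psi}(B_0)} \geq \|\chi_{\{|g|\geq 1\} \cap B_0}\|_{L_{\Psi}}$ combined with \eqref{normofcharac} gives $\|b_0 - (b_0)_{B_0}\|_{L_{\Psi}(B_0)} \gtrsim \Psi^{-1}(r_0^{-Q})^{-1}$. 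Combining the two estimates produces \eqref{cond2} for every $r_0 > 0$.

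Part 3 is then essentially formal: chaining \eqref{adRieszCommCharOrl3} with \eqref{cond2} gives \eqref{adRieszCommCharOrl1}, and since the pointwise estimate in Part 1 used only $|b(x) - b(y)|$ in the integrand, the same argument delivers the sufficiency for $|b, I_\alpha|$ as well; necessity follows from Part 2. The main obstacle will be the Hedberg norm-taking step in Part 1: after optimizing $r = r(x)$, the residual BMO factor $|b(x) - b_{B(x, r(x))}|$ varies with $x$ through $M_b f(x)$, and controlling its contribution to the $L_{\Psi}$-norm requires a careful interplay between the $\Delta_2$-regularity of $\Psi$ and Lemma \ref{Bmo-orlicz}.
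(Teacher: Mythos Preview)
Your proposal has two substantial gaps, one in each of Parts 1 and 2.

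\textbf{Part 1.} The obstacle you flag at the end is real and your suggested fix does not work. After the Hedberg optimization you are left with a term of the form
\[
|b(x)-b_{B(x,r(x))}|\,\Psi^{-1}\big(r(x)^{-Q}\big)\,\|f\|_{L_\Phi},
\]
where the radius $r(x)$ depends on $x$ through $M_b f(x)$. Lemma~\ref{Bmo-orlicz} only controls $\|b(\cdot)-b_{B}\|_{L_\Psi(B)}$ for a \emph{fixed} ball $B$; it says nothing about the $L_\Psi$-norm of $x\mapsto |b(x)-b_{B(x,r(x))}|$ with both center and radius varying. Since for unbounded $b\in BMO$ (think of $\log|\cdot|$ on $\Rn$) the pointwise oscillation $|b(x)-b_{B(x,r)}|$ is not uniformly bounded, there is no evident way to close this estimate within your scheme. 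The paper avoids this difficulty by a structural device you are missing: it fixes a test ball $B=B(x_0,r)$, splits $[b,I_\alpha]f_2$ as $J_1+J_2(x)$ with $J_2(x)=|b(x)-b_{B}|\cdot(\text{quantity independent of }x)$, and estimates $\|J_2\|_{L_\Psi(B)}$ \emph{directly} via Lemma~\ref{Bmo-orlicz} on the fixed ball $B$, before any optimization. Only the remaining piece $J_0(x)+J_1$, which is controlled purely by $r^\alpha M_b f(x)$ and a constant, is subjected to the Hedberg balancing. A secondary point: the local term produces $M_b f$, not $Mf$, so the norm step must appeal to the $L_\Phi$-boundedness of $M_b$ (Theorem~\ref{TinOrlicz}), not Theorem~\ref{maxorl}.

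\textbf{Part 2.} You propose to work with the particular function $b_0(y)=\log d(y,x^\ast)$, but the theorem fixes $b\in BMO(X)$ in the hypotheses: you must show that boundedness of $|b,I_\alpha|$ for the \emph{given} $b$ forces \eqref{cond2}. Switching to an auxiliary $b_0$ proves a different implication. The paper's argument stays with the given $b$: from Lemma~\ref{estRszCom} one gets $r_0^{\alpha}\,\|b-b_{B_0}\|_{L_\Psi(B_0)}\lesssim \||b,I_\alpha|\chi_{B_0}\|_{L_\Psi}$, and then Lemma~\ref{Bmo-orlicz} (this is where $\Psi\in\Delta_2$ is used) together with \eqref{normofcharac} converts this into \eqref{cond2}. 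Your level-set computation for the specific logarithm is unnecessary and, in a general $Q$-homogeneous space, not obviously available.

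Part~3 is indeed formal once Parts~1 and~2 are in place, as you say.
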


\begin{proof}
(1) For arbitrary $x_0 \in X$, set $B=B(x_0,r)$ for the ball
centered at $x_0$ and of radius $r$. Write $f=f_1+f_2$ with
$f_1=f\chi_{_{2kB}}$ and $f_2=f\chi_{_{\dual (2kB)}}$, where $k$ is the constant from the triangle inequality \eqref{triinq}.

For $x \in B$ we have
\begin{align*}
|[b,I_{\alpha}]f_2(x)| & \lesssim \int_{X} \frac{|b(y)-b(x)|}{d(x,y)^{Q-\alpha} }|f_2(y)|d\mu(y)
\thickapprox \int_{{\,^{^{\complement}}\!}{(2kB)}} \frac{|b(y)-b(x)|}{d(x_{0},y)^{Q-\alpha} }|f(y)| d\mu(y)
\\
&\lesssim \int_{{\,^{^{\complement}}\!}{(2kB)}} \frac{|b(y)-b_B|}{d(x_{0},y)^{Q-\alpha} }|f(y)| d\mu(y)
+ \int_{{\,^{^{\complement}}\!}{(2kB)}} \frac{|b(x)-b_B|}{d(x_{0},y)^{Q-\alpha} }|f(y)| d\mu(y)\\
&=J_1+J_2(x),
\end{align*}
since $x \in B$ and $y\in \dual (2kB)$ implies
$$\frac{1}{2k}d(x_0,y)\le d(x,y)\le(k+\frac{1}{2})d(x_0,y).$$

Let us estimate $J_1$.
\begin{align*}
J_1&= \int_{\dual
(2kB)}\frac{|b(y)-b_{B}|}{d(x_{0},y)^{Q-\alpha}}|f(y)|d\mu(y)
\thickapprox \int_{\dual
(2kB)}|b(y)-b_{B}||f(y)|\int_{d(x_0,y)}^{\infty}\frac{dt}{t^{Q+1-\a}}d\mu(y)
\\
&\thickapprox  \int_{2kr}^{\infty}\int_{2kr\leq d(x_0,y)\leq t}
|b(y)-b_{B}||f(y)|d\mu(y)\frac{dt}{t^{Q+1-\a}}\\
&\lesssim  \int_{2kr}^{\infty}\int_{B(x_0,t)}
|b(y)-b_{B}||f(y)|d\mu(y)\frac{dt}{t^{Q+1-\a}}.
\end{align*}

Applying H\"older's inequality, by \eqref{2.3}, \eqref{propBMO}, \eqref{Bmo-orlicz} and Lemma \ref{lemHold} we get
\begin{align*}
J_1 & \lesssim \int_{2r}^{\infty}\int_{B(x_0,t)}
|b(y)-b_{B(x_0,t)}||f(y)|d\mu(y)\frac{dt}{t^{Q+1-\a}}
\\
&\quad +  \int_{2r}^{\infty}|b_{B(x_0,r)}-b_{B(x_0,t)}|
\int_{B(x_0,t)} |f(y)|d\mu(y)\frac{dt}{t^{Q+1-\a}}
\\
&\lesssim  \int_{2r}^{\infty}
\left\|b(\cdot)-b_{B(x_0,t)}\right\|_{L_{\widetilde{\Phi}}(B(x_0,t))} \|f\|_{L_\Phi(B(x_0,t))}\frac{dt}{t^{Q+1-\a}}
\\
& \quad +  \int_{2r}^{\infty}|b_{B(x_0,r)}-b_{B(x_0,t)}|
\|f\|_{L_\Phi(B(x_0,t))}\Phi^{-1}\big(\mu(B(x_0,t))^{-1}\big)\frac{dt}{t^{1-\a}}
\\
& \lesssim \|b\|_{*}\,
\int_{2r}^{\infty}\Big(1+\ln \frac{t}{r}\Big)
\|f\|_{L_\Phi(B(x_0,t))}\Phi^{-1}\big(\mu(B(x_0,t))^{-1}\big)\frac{dt}{t^{1-\a}}.
\\
& \lesssim \|b\|_{*}\, \|f\|_{L_{\Phi}}\int_{2r}^{\infty}\Big(1+\ln \frac{t}{r}\Big)
\Phi^{-1}\big(t^{-Q}\big)t^{\alpha}\frac{dt}{t}.
\end{align*}

A geometric observation shows $2kB\subset B(x,\delta)$ for all $x \in B$, where $\delta=(2k+1)kr$. Using Lemma \ref{AnalgHedCom}, we get
\begin{align*}
J_0(x):=|[b,I_{\alpha}]f_1(x)| & \lesssim \int_{2kB} \frac{|b(y)-b(x)|}{d(x,y)^{Q-\alpha} }|f(y)|d\mu(y)
\\
&\lesssim \int_{B(x,\delta)} \frac{|b(y)-b(x)|}{d(x,y)^{Q-\alpha} }|f(y)|d\mu(y) \lesssim r^{\a} M_b f(x).
\end{align*}

Consequently, we have
$$
J_0(x)+J_1 \lesssim \|b\|_{*}r^{\a} M_b f(x)+ \|b\|_{*}\|f\|_{L_{\Phi}}\int_{2r}^{\infty}\Big(1+\ln \frac{t}{r}\Big)
\Phi^{-1}\big(t^{-Q}\big)t^{\alpha}\frac{dt}{t}.
$$

Thus, by \eqref{adRieszCommCharOrl1} we obtain
\begin{align*}
J_0(x)+J_1 & \lesssim \|b\|_{*} \left(M_b f(x)\frac{\Psi^{-1}(r^{-Q})}{\Phi^{-1}(r^{-Q})}+\Psi^{-1}(r^{-Q})\|f\|_{L_{\Phi}}\right).
\end{align*}
Choose $r>0$ so that $\Phi^{-1}(r^{-Q})=\frac{M_b f(x)}{C_0\|b\|_{*}\|f\|_{L_{\Phi}}}$. Then
$$
\frac{\Psi^{-1}(r^{-Q})}{\Phi^{-1}(r^{-Q})}=\frac{(\Psi^{-1}\circ\Phi)(\frac{M_b f(x)}{C_0\|b\|_{*}\|f\|_{L_{\Phi}}})}{\frac{M_b f(x)}{C_0\|b\|_{*}\|f\|_{L_{\Phi}}}}.
$$
Therefore, we get
$$
J_0(x)+J_1 \leq C_1\|b\|_{*} \|f\|_{L_{\Phi}}(\Psi^{-1}\circ\Phi)(\frac{M_b f(x)}{C_0\|b\|_{*}\|f\|_{L_{\Phi}}}).
$$
Let $C_0$ be as in \eqref{Mbbdninq}. Consequently by Theorem \ref{TinOrlicz} we have
\begin{align*}
\int_{B}\Psi\left(\frac{J_0(x)+J_1}{C_1\|b\|_{*}\|f\|_{L_{\Phi}}}\right)d\mu(x)&\leq \int_{B}\Phi\left(\frac{M_b f(x)}{C_0\|b\|_{*}\|f\|_{L_{\Phi}}}\right)d\mu(x)\\
&\leq \int_{X}\Phi\left(\frac{M_b  f(x)}{\|M_b f\|_{L_{\Phi}}}\right)d\mu(x)\leq 1,
\end{align*}
i.e.
\begin{equation}\label{hjkllads}
\|J_0(\cdot)+J_1\|_{L_{\Psi}(B)}\lesssim \|b\|_{*} \|f\|_{L_{\Phi}}.
\end{equation}
In order to estimate $J_2$, by \eqref{Bmo-orlicz}, Lemma \ref{lemHold} and condition \eqref{adRieszCommCharOrl1}, we also get
\begin{align*}
\|J_2\|_{L_{\Psi}(B)} & =
\left\|\int_{\dual (2kB)}
\frac{|b(\cdot)-b_{B}|}{d(x_{0},y)^{Q-\alpha}}|f(y)|d\mu(y)\right\|_{L_{\Psi}(B)}
\\
& \thickapprox
\left\|b(\cdot)-b_{B}\right\|_{L_{\Psi}(B)}\int_{\dual (2kB)}
\frac{|f(y)|}{d(x_{0},y)^{Q-\alpha}}d\mu(y)
\\
&\lesssim \|b\|_{*}\,\frac{1}{\Psi^{-1}\big(r^{-Q}\big)}\int_{\dual (2kB)}
\frac{|f(y)|}{d(x_{0},y)^{Q-\alpha}}d\mu(y)
\\
& \thickapprox
\|b\|_{*}\,\frac{1}{\Psi^{-1}\big(r^{-Q}\big)}\int_{\dual {(2kB)}}|f(y)|\int_{d(x_{0},y)}^{\i}\frac{dt}{t^{Q+1-\alpha}}d\mu(y)\\
&\thickapprox \|b\|_{*}\,\frac{1}{\Psi^{-1}\big(r^{-Q}\big)}\int_{2kr}^{\i}\int_{2kr\leq d(x_{0},y)< t}|f(y)|d\mu(y)\frac{dt}{t^{Q+1-\alpha}}
\\
&\lesssim \|b\|_{*}\,\frac{1}{\Psi^{-1}\big(r^{-Q}\big)}\int_{2r}^{\i}\int_{B(x_0,t) }|f(y)|d\mu(y)\frac{dt}{t^{Q+1-\alpha}}
\\
&\lesssim \|b\|_{*}\,\frac{1}{\Psi^{-1}\big(r^{-Q}\big)}\int_{2r}^{\i}\|f\|_{L_{\Phi}(B(x_0,t))} \Phi^{-1}\big(t^{-Q}\big) t^{\alpha-1} dt
\\
&\lesssim \|b\|_{*}\,
\frac{1}{\Psi^{-1}\big(r^{-Q}\big)}\|f\|_{L_{\Phi}}\,\int_{2r}^{\i}t^{\a}\Phi^{-1}\big(t^{-Q}\big)\frac{dt}{t}\\
&\lesssim \|b\|_{*}\,\|f\|_{L_{\Phi}}.
\end{align*}
Consequently, we have
\begin{equation}\label{hjkll}
\|J_2\|_{L_{\Psi}(B)}\lesssim \|b\|_{*}\,\|f\|_{L_{\Phi}}.
\end{equation}
Combining \eqref{hjkllads} and \eqref{hjkll}, we get
\begin{equation}\label{gfhajqwewr}
\|[b,I_{\a}]f\|_{L_{\Psi}(B)}\lesssim \|b\|_{*}\|f\|_{L_{\Phi}}.
\end{equation}
By taking supremum over $B$ in \eqref{gfhajqwewr}, we get
$$
\|[b,I_{\a}]f\|_{L_{\Psi}}\lesssim \|b\|_{*}\|f\|_{L_{\Phi}},
$$
since the constants in \eqref{gfhajqwewr} do not depend on $x_0$ and $r$.

(2) We shall now prove the second part. Let $B_0=B(x_0,r_0)$ and $x\in B_0$. By Lemmas \ref{estRszCom}, \ref{Bmo-orlicz} and \ref{normofcharac} we have
\begin{align*}
r_0^{\alpha}&\lesssim\frac{\||b,I_{\a}| \chi_{B_0}\|_{L_{\Psi}(B_0)}}{\|b(\cdot)-b_{B_0}\|_{L_{\Psi}(B_0)}} \lesssim \Psi^{-1}(r_0^{-Q})\||b,I_{\a}| \chi_{B_0}\|_{L_{\Psi}(B_0)}
\\
&\lesssim \Psi^{-1}(r_0^{-Q})\||b,I_{\a}| \chi_{B_0}\|_{L_{\Psi}} \lesssim \Psi^{-1}(r_0^{-Q})\|\chi_{B_0}\|_{L_{\Phi}}\lesssim \frac{\Psi^{-1}(r_{0}^{-Q})}{\Phi^{-1}(r_{0}^{-Q})}.
\end{align*}
Since this is true for every $r_0>0$, we are done.

(3) The third statement of the theorem follows from the first and second parts of the theorem.
\end{proof}

\section{Fractional integral and its commutators in generalized Orlicz-Morrey spaces}

The following theorem is one of our main results.
\begin{thm}\label{AdGulRszOrlMorNec}

Let $0<\a<Q$, $\Phi\in\mathcal{Y}$, $\beta\in(0,1)$ and $\eta(t)\equiv\varphi(t)^{\beta}$ and $\Psi(t)\equiv\Phi(t^{1/\beta})$.

$1.~$ If $\Phi\in\nabla_2$ and $\varphi(t)$ satisfies \eqref{bounmax}, then the condition
\begin{equation}\label{eq3.6.V}
t^{\alpha}\varphi(t) + \int_{t}^{\infty} r^{\alpha}\, \varphi(r) \frac{dr}{r} \le C
\varphi(t)^{\beta},
\end{equation}
for all $t>0$, where $C>0$ does not depend $t$, is sufficient for boundedness of $I_{\a}$ from $M_{\Phi,\varphi}(X)$ to $M_{\Psi,\eta}(X)$.

$2.~$ If $\varphi\in{\mathcal{G}}_{\Phi}$, then the condition
\begin{equation}\label{condAdams}
t^{\alpha}\varphi(t)\le C \varphi(t)^{\beta},
\end{equation}
for all $t>0$, where $C>0$ does not depend $t$, is necessary for boundedness of $I_{\a}$ from $M_{\Phi,\varphi}(X)$ to $M_{\Psi,\eta}(X)$.

$3.~$ Let $\Phi\in\nabla_2$. If $\varphi\in{\mathcal{G}}_{\Phi}$
satisfies the regularity condition
\begin{equation}\label{intcondAdams}
\int_{t}^{\infty} r^{\alpha}\, \varphi(r) \frac{dr}{r} \le C t^{\a}\varphi(t),
\end{equation}
for all $t>0$, where $C>0$ does not depend $t$, then the condition \eqref{condAdams}
is necessary and sufficient for boundedness of $I_{\a}$ from $M_{\Phi,\varphi}(X)$ to $M_{\Psi,\eta}(X)$.
\end{thm}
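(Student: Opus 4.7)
The plan is to prove Part 1 (sufficiency) via a pointwise Hedberg--Adams estimate combined with the maximal-function bound of Theorem \ref{thm4.4.max}, Part 2 (necessity of \eqref{condAdams}) by testing on characteristic functions of balls, and Part 3 as an immediate consequence of the first two. For Part 1, fix $B=B(x_0,r)$ and split $f=f_1+f_2$ with $f_1=f\chi_{_{2kB}}$. For $x\in B$ the near piece satisfies the homogeneous-space Hedberg inequality $|I_\alpha f_1(x)|\lesssim r^\alpha Mf(x)$ (the $b\equiv 0$ instance of the calculation in Lemma \ref{AnalgHedCom}), while for the far piece the Fubini plus Lemma \ref{lemHold} computation from the proof of Theorem \ref{AdamsCommRieszCharOrl} gives
\begin{equation*}
|I_\alpha f_2(x)|\lesssim \int_{r}^{\infty}\|f\|_{L_\Phi(B(x_0,t))}\,\Phi^{-1}(t^{-Q})\,t^{\alpha-1}\,dt \lesssim \|f\|_{M_{\Phi,\varphi}}\int_{r}^{\infty}\varphi(t)\,t^{\alpha-1}\,dt,
\end{equation*}
and \eqref{eq3.6.V} controls the last integral by $\varphi(r)^\beta$. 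Since \eqref{eq3.6.V} implies \eqref{condAdams}, i.e.\ $r^\alpha\lesssim\varphi(r)^{\beta-1}$, the two pieces together yield
\begin{equation*}
|I_\alpha f(x)|\lesssim \varphi(r)^{\beta-1}Mf(x)+\varphi(r)^\beta\|f\|_{M_{\Phi,\varphi}}, \qquad r>0,
\end{equation*}
and minimizing in $r$ (balancing via $\varphi(r)\approx Mf(x)/\|f\|_{M_{\Phi,\varphi}}$, the Hedberg trick) produces the pointwise Adams-type bound $|I_\alpha f(x)|\lesssim \|f\|_{M_{\Phi,\varphi}}^{1-\beta}(Mf(x))^{\beta}$.

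The structural identities $\Psi(s)=\Phi(s^{1/\beta})$ and $\eta=\varphi^\beta$, equivalently $\Psi^{-1}(t^{-Q})=\Phi^{-1}(t^{-Q})^\beta$, then convert this pointwise inequality into the local Orlicz bound $\|I_\alpha f\|_{L_\Psi(B)}\lesssim \|f\|_{M_{\Phi,\varphi}}^{1-\beta}\|Mf\|_{L_\Phi(B)}^{\beta}$ on any ball $B$. Multiplying by $\varphi(r)^{-\beta}\Psi^{-1}(r^{-Q})$ and taking the supremum yields $\|I_\alpha f\|_{M_{\Psi,\eta}}\lesssim\|f\|_{M_{\Phi,\varphi}}^{1-\beta}\|Mf\|_{M_{\Phi,\varphi}}^{\beta}$, and Theorem \ref{thm4.4.max}, applicable because $\Phi\in\nabla_2$ and \eqref{bounmax} is in force, closes Part 1. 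For Part 2 I would test the assumed boundedness on $f=\chi_{B_0}$ with $B_0=B(x_0,r_0)$: Lemma \ref{estRsz} gives $I_\alpha\chi_{B_0}(x)\gtrsim r_0^\alpha$ on $B_0$, so together with \eqref{normofcharac} one obtains $\|I_\alpha\chi_{B_0}\|_{M_{\Psi,\eta}}\gtrsim r_0^\alpha\varphi(r_0)^{-\beta}$, while Lemma \ref{charOrlMor} (which needs $\varphi\in\mathcal{G}_\Phi$) supplies $\|\chi_{B_0}\|_{M_{\Phi,\varphi}}\lesssim\varphi(r_0)^{-1}$; the assumed boundedness therefore forces $r_0^\alpha\varphi(r_0)\lesssim\varphi(r_0)^\beta$, which is exactly \eqref{condAdams}. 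Part 3 is then immediate: under $\varphi\in\mathcal{G}_\Phi$, the almost-increase of $\varphi/\Phi^{-1}(\cdot^{-Q})$ together with the almost-decrease of $\varphi$ makes \eqref{bounmax} automatic, and \eqref{condAdams} combined with \eqref{intcondAdams} recovers the full \eqref{eq3.6.V}, so Part 1 applies.

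The main obstacle is the Hedberg-type minimization in Part 1: a clean choice of $r$ solving $\varphi(r)=Mf(x)/\|f\|_{M_{\Phi,\varphi}}$ presumes more regularity (continuity, surjectivity onto $(0,\infty)$) of $\varphi$ than is stated. The remedy is either to take the infimum over $r>0$ and lose only a universal constant, since the two summands have opposite monotonicity in $\varphi(r)$, or to bypass the pointwise minimization and convert the two-term bound directly into a modular inequality for $\Psi$ through a judicious choice of $r$ inside $\int_B\Psi(|I_\alpha f|/\lambda)\,d\mu$, as was done in the proof of Theorem \ref{AdamsCommRieszCharOrl}; either route reaches the required Orlicz--Morrey estimate, but executing this step cleanly is the technical crux of the argument.
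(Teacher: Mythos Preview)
Your proposal is correct and follows the paper's approach essentially line by line: the same Hedberg splitting and pointwise Adams estimate $|I_\alpha f(x)|\lesssim (Mf(x))^\beta\|f\|_{M_{\Phi,\varphi}}^{1-\beta}$ in Part~1, followed by the identity $\Psi^{-1}=(\Phi^{-1})^\beta$ and Theorem~\ref{thm4.4.max}; the same characteristic-function test via Lemmas~\ref{estRsz} and~\ref{charOrlMor} in Part~2; and Part~3 as a corollary. Regarding your stated obstacle, the paper bypasses the surjectivity issue by observing that the two-term bound holds for every $t>0$, replacing $\varphi(t)$ by a free parameter $s>0$, and invoking the elementary identity $\sup_{s>0}\min\{s^{\beta-1}A,\,s^\beta B\}=A^\beta B^{1-\beta}$ (equivalently $\inf_{s>0}[s^{\beta-1}A+s^\beta B]\approx A^\beta B^{1-\beta}$), which requires no regularity of $\varphi$ --- this is exactly the ``infimum over $r$'' remedy you proposed.
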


\begin{proof}
\emph{Proof of the first part of the theorem:}

For arbitrary ball $B=B(x,t)$ we represent $f$ as
\begin{equation*}
f=f_1+f_2, \ \quad f_1(y)=f(y)\chi _{B}(y),\quad
 f_2(y)=f(y)\chi_{\dual {(B)}}(y),
\end{equation*}
and have
$$
I_\a f(x)=I_\a f_1(x)+I_\a f_2(x).
$$
For $I_\a f_1(x)$, following Hedberg's trick, we obtain $|I_\a f_1(x)|\leq C_1t^\a Mf(x)$. For $I_\a f_2(x)$ by Lemma \ref{lemHold} we have
\begin{equation*}
\begin{split}
\int_{\dual {(B)}}\frac{|f(y)|}{d(x,y)^{Q-\alpha}}d\mu(y) &
\thickapprox
\int_{\dual {(B)}}|f(y)|\int_{d(x,y)}^{\i}\frac{dr}{r^{Q+1-\alpha}}d\mu(y)
\\
&\thickapprox \int_{t}^{\i}\int_{t\leq d(x,y)< r}|f(y)|d\mu(y)\frac{dr}{r^{Q+1-\alpha}}
\\
&\leq C_2\int_t^\i \Phi^{-1}(r^{-Q})r^{\a-1}\|f\|_{L_{\Phi}(B(x,r))}dr.
\end{split}
\end{equation*}
Consequently we have
\begin{equation*}
\begin{split}
|I_\a f(x)| &
\lesssim
t^\a Mf(x)+\int_t^\i \Phi^{-1}(r^{-Q})r^{\a-1}\|f\|_{L_{\Phi}(B(x,r))}dr
\\
&\lesssim t^\a Mf(x)+\|f\|_{M_{\Phi,\varphi}}\int_t^\i r^\a\varphi(r)\frac{dr}{r}.
\end{split}
\end{equation*}
From \eqref{eq3.6.V} we obtain
\begin{align*}
|I_{\a} f(x)| & \lesssim  \min \{ \varphi(t)^{\beta-1} Mf(x), \varphi(t)^{\beta} \|f\|_{\mathcal{M}^{\Phi,\varphi}}\}
\\
& \lesssim \sup\limits_{s>0} \min \{ s^{\beta-1} Mf(x), s^{\beta} \|f\|_{\mathcal{M}^{\Phi,\varphi}}\}
\\
& = (Mf(x))^{\beta} \, \|f\|_{M_{\Phi,\varphi}}^{1-\beta},
\end{align*}
where we have used that the supremum is achieved when the minimum parts are balanced.
Hence for every $x\in X$ we have
\begin{equation}\label{poiwseest}
|I_{\a} f(x)| \lesssim  (Mf(x))^{\beta} \, \|f\|_{M_{\Phi,\varphi}}^{1-\beta}.
\end{equation}

By using the inequality \eqref{poiwseest} we have
$$
\|I_{\a} f\|_{L_{\Psi}(B)}\lesssim \|(Mf)^{\beta}\|_{L_{\Psi}(B)}\, \|f\|_{M_{\Phi,\varphi}}^{1-\beta}.
$$

Note that from \eqref{orlpr} we get
$$
\int_B \Psi\left(\frac{(Mf(x))^{\beta}}{\|Mf\|_{L_{\Phi}(B)}^{\beta}}\right)d\mu(x)=\int_B \Phi\left(\frac{Mf(x)}{\|Mf\|_{L_{\Phi}(B)}}\right)d\mu(x)\leq 1.
$$
Thus $\|(Mf)^{\beta}\|_{L_{\Psi}(B)}\leq \|Mf\|_{L_{\Phi}(B)}^{\beta}$. Consequently by using this inequality we have
\begin{equation}\label{poiwseestnorm}
\|I_{\a} f\|_{L_{\Psi}(B)}\lesssim \|Mf\|_{L_{\Phi}(B)}^{\beta}\, \|f\|_{M_{\Phi,\varphi}}^{1-\beta}.
\end{equation}
From Theorem \ref{thm4.4.max} and \eqref{poiwseestnorm}, we get
\begin{align*}
\|I_{\a}f\|_{M_{\Psi,\eta}}&=\sup\limits_{x\in X, t>0}\eta(t)^{-1}\Psi^{-1}(t^{-Q})\|I_{\a} f\|_{L_{\Psi}(B)}\\
&\lesssim  \|f\|_{M_{\Phi,\varphi}}^{1-\beta}\, \sup\limits_{x\in X, t>0}\eta(t)^{-1}\Psi^{-1}(t^{-Q})\|Mf\|_{L_{\Phi}(B)}^{\beta}\\
&=\|f\|_{M_{\Phi,\varphi}}^{1-\beta}\, \left(\sup\limits_{x\in X, t>0}\varphi(t)^{-1}\Phi^{-1}(t^{-Q})\|Mf\|_{L_{\Phi}(B)}\right)^{\beta} \\
&\lesssim \|f\|_{M_{\Phi,\varphi}}.
\end{align*}

\emph{Proof of the second part of the theorem:}

Let $B_0=B(x_0,t_0)$ and $x\in B_0$. By Lemma \ref{estRsz} we have $t_0^{\alpha}\leq C I_{\a} \chi_{B_0}(x)$.
Therefore, by \eqref{normofcharac} and Lemma \ref{charOrlMor} we have
\begin{align*}
t_0^{\alpha}&\leq C\Psi^{-1}(\mu(B_0)^{-1})\|I_{\a} \chi_{B_0}\|_{L_{\Psi}(B_0)} \leq C\eta(t_0)\|I_{\a} \chi_{B_0}\|_{M_{\Psi,\eta}}
\\
&\leq C\eta(t_0)\|\chi_{B_0}\|_{M_{\Phi,\varphi}}\leq C\frac{\eta(t_0)}{\varphi(t_0)}= C \varphi(t_0)^{\beta-1}.
\end{align*}
Since this is true for every $t_0>0$, we are done.

The third statement of the theorem follows from first and second parts of the theorem.
\end{proof}

\begin{thm}\label{AdGulRszOrlMorNecW}

Let $0<\a<Q$, $\Phi\in\mathcal{Y}$, $\beta\in(0,1)$ and $\eta(t)\equiv\varphi(t)^{\beta}$ and $\Psi(t)\equiv\Phi(t^{1/\beta})$.

$1.~$ If $\varphi(t)$  satisfies \eqref{bounmax}, then the condition \eqref{eq3.6.V} is sufficient for boundedness of $I_{\a}$ from $M_{\Phi,\varphi}(X)$ to $WM_{\Psi,\eta}(X)$.

$2.~$ If $\varphi\in{\mathcal{G}}_{\Phi}$, then the condition \eqref{condAdams}
is necessary for boundedness of $I_{\a}$ from $M_{\Phi,\varphi}(X)$ to $WM_{\Psi,\eta}(X)$.

$3.~$ If $\varphi\in{\mathcal{G}}_{\Phi}$
satisfies the regularity condition \eqref{intcondAdams}, then the condition \eqref{condAdams}
is necessary and sufficient for boundedness of $I_{\a}$ from $M_{\Phi,\varphi}(X)$ to $WM_{\Psi,\eta}(X)$.
\end{thm}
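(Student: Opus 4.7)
The proof will follow the template of Theorem \ref{AdGulRszOrlMorNec}, with the strong $L_\Psi$-norm on a ball replaced throughout by the weak $L_\Psi$-norm. For part~(1), I would first reproduce the pointwise estimate
\[
|I_\alpha f(x)| \lesssim (Mf(x))^\beta \, \|f\|_{M_{\Phi,\varphi}}^{1-\beta},
\]
exactly as in the proof of Theorem \ref{AdGulRszOrlMorNec}: split $f = f_1 + f_2$ on a ball $B(x,t)$, use the Hedberg-type trick together with Lemma \ref{lemHold} for the two pieces, combine with \eqref{eq3.6.V}, and optimize the resulting two-term bound in $t$. This step does not use $\Phi \in \nabla_2$, which is why the weak-type statement of the theorem does not require it.

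The crux is to upgrade the pointwise estimate to a local weak-norm inequality
\[
\|I_\alpha f\|_{WL_\Psi(B)} \lesssim \|Mf\|_{WL_\Phi(B)}^\beta \, \|f\|_{M_{\Phi,\varphi}}^{1-\beta}.
\]
I would use the relation $\Psi(t) = \Phi(t^{1/\beta})$, equivalently $\Psi^{-1}(u) = \bigl[\Phi^{-1}(u)\bigr]^\beta$. Setting $A = C\,\|Mf\|_{WL_\Phi(B)}^\beta \|f\|_{M_{\Phi,\varphi}}^{1-\beta}$, the pointwise bound gives, for every $\tau>0$,
\[
\{x \in B : |I_\alpha f(x)| > \tau\} \subset \{x \in B : Mf(x) > s\}, \qquad s := \bigl(\tau/(C\|f\|_{M_{\Phi,\varphi}}^{1-\beta})\bigr)^{1/\beta}.
\]
The identity $\Psi(\tau/A) = \Phi(s/\|Mf\|_{WL_\Phi(B)})$ combined with \eqref{worlpr} applied to $Mf$ on $B$ yields $\sup_\tau \Psi(\tau/A)\,\mu\{x \in B : |I_\alpha f(x)| > \tau\} \leq 1$, which is the claimed local bound. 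Substituting into the $WM_{\Psi,\eta}$-norm and using $\eta(t)^{-1}\Psi^{-1}(t^{-Q}) = \bigl[\varphi(t)^{-1}\Phi^{-1}(t^{-Q})\bigr]^\beta$ factors out a $\beta$-power, so that the weak-type part of Theorem \ref{thm4.4.max} (which only needs \eqref{bounmax}) delivers $\|Mf\|_{WM_{\Phi,\varphi}} \lesssim \|f\|_{M_{\Phi,\varphi}}$, finishing part~(1).

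For part~(2), I would mimic verbatim the necessity argument in Theorem \ref{AdGulRszOrlMorNec}(2). Given $B_0 = B(x_0,t_0)$, Lemma \ref{estRsz} ensures $I_\alpha \chi_{B_0}(x) \gtrsim t_0^\alpha$ for every $x \in B_0$, so the distribution function of $I_\alpha \chi_{B_0}$ on $B_0$ equals $\mu(B_0)$ below a fixed multiple of $t_0^\alpha$; inserting this into the definition of the weak-$L_\Psi$ norm on $B_0$ produces $\|I_\alpha \chi_{B_0}\|_{WL_\Psi(B_0)} \gtrsim t_0^\alpha / \Psi^{-1}(\mu(B_0)^{-1})$. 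Combined with the assumed boundedness, Lemma \ref{charOrlMor}, and \eqref{Qhomogeneous}, this yields $t_0^\alpha \varphi(t_0) \lesssim \varphi(t_0)^\beta$ uniformly in $t_0$, which is \eqref{condAdams}. Part~(3) is then the immediate combination of~(1) and~(2). The only genuinely new technical point is the distribution-function manipulation in the middle paragraph; the rest is a mechanical transcription of the strong-type proof.
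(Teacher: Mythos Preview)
Your proposal is correct and follows essentially the same route as the paper's own proof: the paper reuses the pointwise Adams-type estimate \eqref{poiwseest}, then proves $\|(Mf)^{\beta}\|_{WL_{\Psi}(B)}\le \|Mf\|_{WL_{\Phi}(B)}^{\beta}$ via the substitution $t\mapsto t^{\beta}$ in \eqref{worlpr} (your level-set argument is just this computation written out), and concludes with the weak-type half of Theorem~\ref{thm4.4.max}; the necessity and part~(3) are handled exactly as you describe.
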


\begin{proof}
\emph{Proof of the first part of the theorem:}

By using the inequality \eqref{poiwseest} we have
$$
\|I_{\a} f\|_{WL_{\Psi}(B)}\lesssim \|(Mf)^{\beta}\|_{WL_{\Psi}(B)}\, \|f\|_{M_{\Phi,\varphi}}^{1-\beta},
$$
where $B=B(x,t)$.
Note that from \eqref{worlpr} we get
$$
\sup_{t>0}\Psi\left(\frac{t^\beta}{\|Mf\|_{WL_{\Phi}(B)}^{\beta}}\right)d_{(Mf)^{\beta}}(t^\beta)=\sup_{t>0} \Phi\left(\frac{t}{\|Mf\|_{WL_{\Phi}(B)}}\right)d_{Mf}(t)\leq 1.
$$
Thus $\|(Mf)^{\beta}\|_{WL_{\Psi}(B)}\leq \|Mf\|_{WL_{\Phi}(B)}^{\beta}$. Consequently by using this inequality we have
\begin{equation}\label{poiwseestnormw}
\|I_{\a} f\|_{WL_{\Psi}(B)}\lesssim \|Mf\|_{WL_{\Phi}(B)}^{\beta}\, \|f\|_{M_{\Phi,\varphi}}^{1-\beta}.
\end{equation}
From Theorem \ref{thm4.4.max} and \eqref{poiwseestnormw}, we get
\begin{align*}
\|I_{\a}f\|_{WM_{\Psi,\eta}}&=\sup\limits_{x\in X, t>0}\eta(t)^{-1}\Psi^{-1}(t^{-Q})\|I_{\a} f\|_{WL_{\Psi}(B)}\\
&\lesssim \|f\|_{M_{\Phi,\varphi}}^{1-\beta}\, \sup\limits_{x\in X, t>0}\eta(t)^{-1}\Psi^{-1}(t^{-Q})\|Mf\|_{WL_{\Phi}(B)}^{\beta}\, \\
&=\|f\|_{M_{\Phi,\varphi}}^{1-\beta}\, \left(\sup\limits_{x\in X, t>0}\varphi(t)^{-1}\Phi^{-1}(t^{-Q})\|Mf\|_{WL_{\Phi}(B)}\right)^{\beta} \\
&\lesssim \|f\|_{M_{\Phi,\varphi}}.
\end{align*}

\emph{Proof of the second part of the theorem:}
Let $B_0=B(x_0,t_0)$ and $x\in B_0$. By Lemma \ref{estRsz} we have $t_0^{\alpha}\leq C I_{\a} \chi_{B_0}(x)$. Therefore, by \eqref{normofcharac} and Lemma \ref{charOrlMor}
\begin{align*}
t_0^{\alpha}&\leq C\Psi^{-1}(\mu(B_0)^{-1})\|I_{\a} \chi_{B_0}\|_{WL_{\Psi}(B_0)} \leq C\eta(t_0)\|I_{\a} \chi_{B_0}\|_{WM_{\Psi,\eta}}
\\
&\leq C\eta(t_0)\|\chi_{B_0}\|_{M_{\Phi,\varphi}}\leq C\frac{\eta(t_0)}{\varphi(t_0)}= C \varphi(t_0)^{\beta-1}.
\end{align*}
Since this is true for every $t_0>0$, we are done.

The third statement of the theorem follows from first and second parts of the theorem.
\end{proof}

The following theorem is one of our main results.
\begin{thm} \label{comRszNec}

Let $0<\a<n$, $\Phi\in\mathcal{Y}$, $b\in BMO(X)$, $\beta\in(0,1)$ and $\eta(t)\equiv\varphi(t)^{\beta}$ and $\Psi(t)\equiv\Phi(t^{1/\beta})$.

$1.~$ If $\Phi\in \Delta_2\cap\nabla_2$ and $\varphi$ satisfies \eqref{bounmaxcom}, then the condition
\begin{equation}\label{eq3.6.Vxy}
r^{\alpha}\varphi(r) + \int_{r}^{\infty} \Big(1+\ln \frac{t}{r}\Big) \varphi(t)t^{\alpha}\frac{dt}{t} \le C
\varphi(r)^{\beta},
\end{equation}
for all $r>0$, where $C>0$ does not depend on $r$, is sufficient for the boundedness of $[b,I_{\a}]$ from $M_{\Phi,\varphi}(X)$ to $M_{\Psi,\eta}(X)$.

$2.~$ If $\Phi\in \Delta_2$ and $\varphi\in{\mathcal{G}}_{\Phi}$, then the condition \eqref{condAdams}
is necessary for the boundedness of $|b,I_{\a}|$ from $M_{\Phi,\varphi}(X)$ to $M_{\Psi,\eta}(X)$.

$3.~$ Let $\Phi\in \Delta_2\cap\nabla_2$. If $\varphi\in{\mathcal{G}}_{\Phi}$
satisfies the conditions
\begin{equation*}
\sup_{r<t<\infty} \Big(1+\ln \frac{t}{r}\Big) \varphi(t) \le C \, \varphi(r),
\end{equation*}
and
\begin{equation*}
\int_{r}^{\infty} \Big(1+\ln \frac{t}{r}\Big) \varphi(t)t^{\alpha}\frac{dt}{t} \le C
r^{\alpha}\varphi(r),
\end{equation*}
for all $r>0$, where $C>0$ does not depend on $r$, then the condition \eqref{condAdams}
is necessary and sufficient for the boundedness of $|b,I_{\a}|$ from $M_{\Phi,\varphi}(X)$ to $M_{\Psi,\eta}(X)$.
\end{thm}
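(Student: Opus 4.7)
The plan is to parallel the proof of Theorem \ref{AdGulRszOrlMorNec}, with two substitutions: replace the ordinary Hedberg trick by its commutator version (Lemma \ref{AnalgHedCom}), and replace the $M_{\Phi,\varphi}$-boundedness of $M$ (Theorem \ref{thm4.4.max}) by that of $M_{b}$ (Theorem \ref{thm4.4.maxcom}). The $BMO$-oscillation produces an extra logarithmic weight $(1+\ln(t/r))$ throughout, which accounts for the difference between the sufficient conditions \eqref{eq3.6.V} and \eqref{eq3.6.Vxy}.

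For part (1), fix a ball $B=B(x_0,r)$, decompose $f=f_1+f_2$ with $f_1=f\chi_B$, $f_2=f\chi_{\dual B}$, and estimate $[b,I_{\alpha}]f_1(x_0)\lesssim r^{\alpha}M_{b}f(x_0)$ by Lemma \ref{AnalgHedCom}. For the far piece, following the $J_1,J_2$ decomposition in the proof of Theorem \ref{AdamsCommRieszCharOrl}, a layer-cake rewriting together with Lemma \ref{lemHold}, \eqref{propBMO} and Lemma \ref{Bmo-orlicz} yields
$$
|[b,I_{\alpha}]f_2(x_0)|\lesssim \|b\|_{*}\,\|f\|_{M_{\Phi,\varphi}}\int_{r}^{\infty}\Bigl(1+\ln\tfrac{t}{r}\Bigr)\varphi(t)t^{\alpha}\tfrac{dt}{t}.
$$
Combining the two pieces and invoking \eqref{eq3.6.Vxy} gives
$$
|[b,I_{\alpha}]f(x_0)|\lesssim \|b\|_{*}\min\bigl\{\varphi(r)^{\beta-1}M_{b}f(x_0),\,\varphi(r)^{\beta}\|f\|_{M_{\Phi,\varphi}}\bigr\}.
$$
Balancing over $r>0$ (the same optimization as for $I_{\alpha}$) produces the pointwise bound $|[b,I_{\alpha}]f(x_0)|\lesssim \|b\|_{*}^{1-\beta}(M_{b}f(x_0))^{\beta}\|f\|_{M_{\Phi,\varphi}}^{1-\beta}$. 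Taking $L_{\Psi}(B)$-norms, exploiting $\|(M_{b}f)^{\beta}\|_{L_{\Psi}(B)}\le \|M_{b}f\|_{L_{\Phi}(B)}^{\beta}$ via the identity $\Psi(t)=\Phi(t^{1/\beta})$ together with \eqref{orlpr}, then multiplying by $\eta(r)^{-1}\Psi^{-1}(r^{-Q})=\varphi(r)^{-\beta}\Phi^{-1}(r^{-Q})^{\beta}$ and taking the supremum, the assumed condition \eqref{bounmaxcom} allows us to apply Theorem \ref{thm4.4.maxcom}, and the exponents assemble to give $\|[b,I_{\alpha}]f\|_{M_{\Psi,\eta}}\lesssim \|b\|_{*}\|f\|_{M_{\Phi,\varphi}}$.

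For part (2), test against $f=\chi_{B_0}$ with $B_0=B(x_0,r_0)$. Lemma \ref{estRszCom} yields the pointwise lower bound $r_0^{\alpha}|b(x)-b_{B_0}|\le C|b,I_{\alpha}|\chi_{B_0}(x)$ on $B_0$; taking $L_{\Psi}(B_0)$-norms gives $r_0^{\alpha}\|b-b_{B_0}\|_{L_{\Psi}(B_0)}\lesssim \||b,I_{\alpha}|\chi_{B_0}\|_{L_{\Psi}(B_0)}$. The right-hand side is controlled, via the Morrey-norm definition, by $\eta(r_0)\Psi^{-1}(r_0^{-Q})^{-1}\||b,I_{\alpha}|\chi_{B_0}\|_{M_{\Psi,\eta}}$, and the assumed boundedness together with Lemma \ref{charOrlMor} bounds this by $\|b\|_{*}\eta(r_0)/(\varphi(r_0)\Psi^{-1}(r_0^{-Q}))$. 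Fixing a $b\in BMO(X)$ for which Lemma \ref{Bmo-orlicz} provides a matching lower bound $\Psi^{-1}(r_0^{-Q})\|b-b_{B_0}\|_{L_{\Psi}(B_0)}\gtrsim \|b\|_{*}$ on balls of the form $B_0$ (this is standard, using a logarithmic test function on $X$), one cancels $\|b\|_{*}$ and $\Psi^{-1}(r_0^{-Q})$ to conclude $r_0^{\alpha}\varphi(r_0)\lesssim \varphi(r_0)^{\beta}$, namely \eqref{condAdams}. Part (3) is immediate: under $\varphi\in\mathcal{G}_{\Phi}$ plus the logarithmic Dini-type regularity, condition \eqref{condAdams} is equivalent to \eqref{eq3.6.Vxy}, so parts (1) and (2) combine. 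The main technical obstacle is carrying the $(1+\ln(t/r))$ factor cleanly through the $J_1$ Hölder estimate while still producing a pointwise $M_{b}$-dominated inequality with exponents $\beta$ and $1-\beta$; once this is in place, the Morrey-norm step is routine via Theorem \ref{thm4.4.maxcom}.
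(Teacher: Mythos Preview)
Your overall strategy for part (1) --- parallel the Adams proof with $M_b$ in place of $M$ --- is the same as the paper's, but there is a real gap in the pointwise step. You claim that the full far piece satisfies
\[
|[b,I_{\alpha}]f_2(x_0)|\lesssim \|b\|_{*}\,\|f\|_{M_{\Phi,\varphi}}\int_{r}^{\infty}\Bigl(1+\ln\tfrac{t}{r}\Bigr)\varphi(t)\,t^{\alpha}\,\tfrac{dt}{t}.
\]
This is not true. Splitting $|b(x_0)-b(y)|\le |b(x_0)-b_{B}|+|b_{B}-b(y)|$ gives two terms; the first contributes the $J_2$-piece $|b(x_0)-b_{B}|\cdot\int_{\dual B}|f(y)|\,d(x_0,y)^{\alpha-Q}\,d\mu(y)$, and the factor $|b(x_0)-b_{B}|$ has no pointwise control for $b\in BMO$ (it can be arbitrarily large at a single point). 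Lemma \ref{Bmo-orlicz}, which you invoke, is a \emph{norm} estimate and cannot rescue a pointwise inequality. Consequently your balancing step, which requires a pointwise bound on the whole of $[b,I_\alpha]f$, does not go through.

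The paper's fix is exactly to separate this term. Only $J_0(x)+J_1$ (the near piece plus the $|b(y)-b_{B}|$-part of the far piece) is bounded pointwise by $\|b\|_*\bigl(r^\alpha M_bf(x)+\|f\|_{M_{\Phi,\varphi}}\int_{2r}^\infty(1+\ln\tfrac{t}{r})\varphi(t)t^{\alpha-1}\,dt\bigr)$, and the Adams optimization is applied to \emph{that} combination to get $J_0(x)+J_1\lesssim \|b\|_*(M_bf(x))^\beta\|f\|_{M_{\Phi,\varphi}}^{1-\beta}$. The $J_2$-piece is handled \emph{only} at the $L_\Psi(B)$-norm level, via $\|b(\cdot)-b_B\|_{L_\Psi(B)}\lesssim \|b\|_*/\Psi^{-1}(r^{-Q})$ from Lemma \ref{Bmo-orlicz} (here $\Psi\in\Delta_2$ since $\Phi\in\Delta_2$), which together with \eqref{eq3.6.Vxy} gives $\|J_2\|_{L_\Psi(B)}\lesssim \|b\|_*\,\varphi(r)^\beta\,\Psi^{-1}(r^{-Q})^{-1}\,\|f\|_{M_{\Phi,\varphi}}$. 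These two norm estimates are then combined at the Morrey level. Your plan for parts (2) and (3) matches the paper; for part (2) you should note that Lemma \ref{Bmo-orlicz} is being used as a two-sided equivalence for the given $b$ (there is no need, and no freedom, to choose a special logarithmic $b$, since $b$ is fixed in the hypothesis).
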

\begin{proof}
For arbitrary $x_0 \in X$, set $B=B(x_0,r)$ for the ball
centered at $x_0$ and of radius $r$. Write $f=f_1+f_2$ with
$f_1=f\chi_{_{2kB}}$ and $f_2=f\chi_{_{\dual (2kB)}}$, where $k$ is the constant from the triangle inequality \eqref{triinq}.

If we use the same notation and proceed as in the proof of Theorem \ref{AdamsCommRieszCharOrl} for $x \in B$ we have
\begin{align*}
J_0(x)+J_1 &\lesssim \|b\|_{*}r^{\a} M_b f(x)+ \|b\|_{*}\,
\int_{2r}^{\infty}\Big(1+\ln \frac{t}{r}\Big)
\|f\|_{L_\Phi(B(x_0,t))}\Phi^{-1}\big(t^{-Q}\big)\frac{dt}{t^{1-\a}}\\
&\lesssim \|b\|_{*}\Big(r^{\a} M_b f(x)+ \|f\|_{\mathcal{M}^{\Phi,\varphi}}\,
\int_{2r}^{\infty}\Big(1+\ln \frac{t}{r}\Big)
\varphi(t)\frac{dt}{t^{1-\a}}\Big).
\end{align*}

Thus, by \eqref{eq3.6.Vxy} we obtain
\begin{align*}
J_0(x)+J_1 & \lesssim \|b\|_{*} \min \{ \varphi(r)^{\beta-1} M_bf(x), \varphi(r)^{\beta} \|f\|_{\mathcal{M}^{\Phi,\varphi}}\}
\\
& \lesssim \|b\|_{*} \sup\limits_{s>0} \min \{ s^{\beta-1} M_b f(x), s^{\beta} \|f\|_{\mathcal{M}^{\Phi,\varphi}}\}
\\
& = \|b\|_{*} (M_b f(x))^{\beta} \, \|f\|_{\mathcal{M}^{\Phi,\varphi}}^{1-\beta}.
\end{align*}
Consequently for every $x\in B$ we have
\begin{equation}\label{poiwseestpotcom}
J_0(x)+J_1 \lesssim \|b\|_{*} (M_b f(x))^{\beta} \, \|f\|_{\mathcal{M}^{\Phi,\varphi}}^{1-\beta}.
\end{equation}
By using the inequality \eqref{poiwseestpotcom} we have
$$
\|J_0(\cdot)+J_1\|_{L_{\Psi}(B)}\lesssim \|b\|_{*}\|(M_b f)^{\beta}\|_{L_{\Psi}(B)}\, \|f\|_{\mathcal{M}^{\Phi,\varphi}}^{1-\beta}.
$$

Note that from \eqref{orlpr} we get
$$
\int_B \Psi\left(\frac{(M_b f(x))^{\beta}}{\|M_b f\|_{L_{\Phi}(B)}^{\beta}}\right)d\mu(x)=\int_B \Phi\left(\frac{M_b f(x)}{\|M_b f\|_{L_{\Phi}(B)}}\right)d\mu(x)\leq 1.
$$
Thus $\|(M_b f)^{\beta}\|_{L_{\Psi}(B)}\leq \|M_b f\|_{L_{\Phi}(B)}^{\beta}$. Therefore, we have
$$
\|J_0(\cdot)+J_1\|_{L_{\Psi}(B)}\lesssim \|b\|_{*}\|M_b f\|_{L_{\Phi}(B)}^{\beta}\, \|f\|_{\mathcal{M}^{\Phi,\varphi}}^{1-\beta}.
$$

If we also use the same notation and proceed as in the proof of Theorem \ref{AdamsCommRieszCharOrl}, we also get
\begin{align*}
\|J_2\|_{L_{\Psi}(B)} \lesssim \|b\|_{*}\,\frac{1}{\Psi^{-1}\big(r^{-Q}\big)}\int_{2r}^{\i}\|f\|_{L_{\Phi}(B(x_0,t))} \Phi^{-1}\big(t^{-Q}\big) t^{\alpha-1} dt.
\end{align*}
From this estimate and condition \eqref{eq3.6.Vxy} we have
\begin{align*}
\|J_2\|_{L_{\Psi}(B)}
&\lesssim
\frac{\|b\|_{*}}{\Psi^{-1}\big(r^{-Q}\big)}\|f\|_{\mathcal{M}^{\Phi,\varphi}}\,\int_{2r}^{\i}t^{\a}\varphi(t)\frac{dt}{t}\\
&\lesssim
\frac{\|b\|_{*}}{\Psi^{-1}\big(r^{-Q}\big)}\|f\|_{\mathcal{M}^{\Phi,\varphi}}
\varphi(r)^{\beta}.
\end{align*}

Consequently by using Theorem \ref{thm4.4.maxcom}, we get
\begin{align*}
&\|[b,I_{\alpha}]f\|_{\mathcal{M}^{\Psi,\eta}}=\sup\limits_{x_0\in X, r>0}\eta(r)^{-1}\Psi^{-1}(r^{-Q})\|[b,I_{\alpha}]f\|_{L_{\Psi}(B)}\\
&\lesssim\|b\|_{*}\|f\|_{\mathcal{M}^{\Phi,\varphi}}^{1-\beta}\, \left(\sup\limits_{x_0\in X, r>0}\varphi(r)^{-1}\Phi^{-1}(r^{-Q})\|M_b f\|_{L_{\Phi}(B)}\right)^{\beta}+\|b\|_{*}\|f\|_{\mathcal{M}^{\Phi,\varphi}}\\
&\lesssim \|b\|_{*}\|f\|_{\mathcal{M}^{\Phi,\varphi}}.
\end{align*}

We shall now prove the second part. Let $B_0=B(x_0,r_0)$ and $x\in B_0$. By Lemma \ref{estRszCom} we have $r_0^{\alpha}|b(x)-b_{B_0}|\leq C |b,I_{\a}| \chi_{B_0}(x)$. Therefore, by Lemma \ref{Bmo-orlicz} and Lemma \ref{charOrlMor}
\begin{align*}
r_0^{\alpha}&\leq C\frac{\||b,I_{\a}| \chi_{B_0}\|_{L_{\Psi}(B_0)}}{\|b(\cdot)-b_{B_0}\|_{L_{\Psi}(B_0)}} \leq \frac{C}{\|b\|_{*}}\||b,I_{\a}| \chi_{B_0}\|_{L_{\Psi}(B_0)}\Psi^{-1}(r^{-Q})
\\
&\leq \frac{C}{\|b\|_{*}}\eta(r_0)\||b,I_{\a}|\chi_{B_0}\|_{\mathcal{M}^{\Psi,\eta}} \leq C\varphi_2(r_0)\|\chi_{B_0}\|_{\mathcal{M}^{\Phi,\varphi}}\leq C\frac{\eta(r_0)}{\varphi(r_0)}
\leq C\varphi(r_0)^{\beta-1}.
\end{align*}
Since this is true for every $r_0>0$, we are done.

The third statement of the theorem follows from the first and second parts of the theorem.
\end{proof}

%


\begin{thebibliography}{99}

\bibitem{Adams} D.R. Adams,  A note on Riesz potentials, Duke Math. J. 42 (1975), 765-778.

\bibitem{Birnbaum-Orlicz} Z. Birnbaum, W. Orlicz,  Über die verallgemeinerung des begriffes der zueinan-der konjugierten potenzen. Studia Math. \textbf{3}, 1"1¤767 (1931).

\bibitem{BenSharp} C. Bennett and R. Sharpley, \emph{Interpolation of operators}, Academic Press, Boston, 1988.

\bibitem{Cald1} A.P. Calder\'{o}n, Commutators of singular integral operators. Proc. Nat. Acad. Sci. U.S.A. \textbf{53}, 1092-1099 (1965)

\bibitem{Chanillo} S. Chanillo, A note on commutators. Indiana Univ. Math. J. \textbf{31} (1), 7-16 (1982)

\bibitem{ChiFra} F. Chiarenza, M. Frasca, Morrey spaces and Hardy-Littlewood maximal function, Rend. Math. Appl. \textbf{1987}, 7, 7, 273-279.

\bibitem{Cianchi1} A. Cianchi, Strong and weak type inequalities for some classical operators in Orlicz spaces, J. London Math. Soc. {60} (2) (1999), no. 1, 187-202.

\bibitem{CoifWeiss}  R.R. Coifman, G. Weiss, Analyse harmonique non-commutative sur certain espaces
homogenes, in ``Lecture Notes in Math.,'' No. 242, Springer-Verlag, Berlin, 1971

\bibitem{CoifWeiss2} R.R. Coifman, G. Weiss, Extensions of Hardy Spaces and their use in analysis, Bull.
Amer. Math. Soc. 831 (1977), 569-645.

\bibitem{CRW} R.R. Coifman, R. Rochberg, G. Weiss, Factorization theorems for Hardy spaces in several variables.
Ann. of Math.(2) vol \textbf{103} (3), 611-635 (1976)

\bibitem{CLMS} R. Coifman, P. Lions, Y. Meyer, S. Semmes, Compensated compactness and Hardy spaces. J. Math. Pures Appl.\textbf{72}, 247-286 (1993)

%
%

\bibitem{DengHan} D. Deng, Y. Han, \emph{Harmonic analysis on spaces of homogeneous type}, Springer-Verlag, Berlin, 2009. xii+154 pp.

\bibitem{DGS} F. Deringoz, V.S. Guliyev and S.G. Samko,  Boundedness of maximal and singular operators on generalized Orlicz-Morrey spaces, Operator Theory, Operator Algebras and Applications, Series: Operator Theory: Advances and Applications Vol. 242 (2014), 1-24.

\bibitem{DGSJIA} F. Deringoz, V.S. Guliyev, S.G. Hasanov, Characterizations for the Riesz potential and its commutators on generalized Orlicz-Morrey spaces. J. Inequal. Appl. 2016, Paper No. 248, 22 pp.


\bibitem{FuYangYuan} X. Fu, D. Yang, W. Yuan,
Boundedness of multilinear commutators of Calder\'{o}n-Zygmund operators on Orlicz spaces over non-homogeneous spaces. Taiwanese J. Math. \textbf{16}, 2203-2238 (2012)

%
%

\bibitem{GenGogKokKr} I. Genebashvili, A. Gogatishvili, V. Kokilashvili, M. Krbec,
\emph{Weight theory for integral transforms on spaces of homogeneous type}. Longman, Harlow, (1998)


%

\bibitem{GulJIA} V.S. Guliyev, Boundedness of the maximal, potential and singular operators in the generalized Morrey spaces,
J. Inequal. Appl.  2009, Art. ID 503948, 20 pp.

\bibitem{GULAKShIEOT2012} V.S. Guliyev, S.S. Aliyev, T. Karaman, P. S. Shukurov,
Boundedness of sublinear operators and commutators on generalized Morrey Space, Int. Eq. Op. Theory. 71 (3) (2011), 327-355.


\bibitem{GulMusHom}  V.S. Guliyev, R. Ch. Mustafayev, Fractional integrals in spaces of functions defined on spaces of homogeneous type, (Russian) Anal. Math.  24  (1998),  no. 3, 181-200.

\bibitem{GulDerHas} V.S. Guliyev, F. Deringoz, S.G. Hasanov,  Riesz potential and its commutators on Orlicz spaces. J. Inequal. Appl. 2017, Paper No. 75, 18 pp.





\bibitem{IzukiSaw} M. Izuki, Y. Sawano, Characterization of $BMO$ via ball Banach function spaces. Vestn. St.-Peterbg. Univ. Mat. Mekh. Astron. 4(62) (2017), no. 1, 78-86.

%


\bibitem{KrasnRut} M.A. Krasnoselskii and Ya. B. Rutickii, \emph{Convex Functions and Orlicz Spaces}, English translation P. Noordhoff Ltd., Groningen, 1961.




\bibitem{Nakai94} E. Nakai, Hardy--Littlewood maximal operator, singular integral operators and Riesz potentials on generalized Morrey spaces, Math. Nachr. {\bf 166} (1994),  95-103.

\bibitem{Nakai} E. Nakai, On generalized fractional integrals, Taiwanese J. Math. 5 (2001), no. 3, 587-602.

\bibitem{Nakai-Hom}
E. Nakai, On generalized fractional integrals in the Orlicz spaces on spaces of homogeneous type,
Scientiae Mathematicae Japonicae, Vol.54 (2001), 473-487.

\bibitem{Nakai-HomFS} E. Nakai, The Campanato, Morrey and H\"{o}lder spaces on spaces of homogeneous type. Studia Math.  176  (2006),  no. 1, 1-19.

\bibitem{Nakaisurv} E. Nakai, Recent topics of fractional integrals, Sugaku Expositions, American Mathematical Society, Vol.20, No.2 (2007), 215--235.

\bibitem{O'Neil} R. O'Neil, Fractional integration in Orlicz spaces, Trans. Amer. Math. Soc. {115} (1965), 300-328.

\bibitem{Orlicz1}  W. Orlicz, \"{U}ber eine gewisse Klasse von R\"{a}umen vom Typus B, Bull. Acad. Polon. A (1932), 207-220;
reprinted in: Collected Papers, PWN, Warszawa, 1988, 217-230.

\bibitem{Peetre} J. Peetre, On the theory of $\mathcal{L}_{p,\lambda}$, J. Funct. Anal. {4} (1969), 71-87.

\bibitem{RaoRen} M.M. Rao and Z.D. Ren, \emph{Theory of Orlicz Spaces}, M. Dekker, Inc., New York, 1991.

\bibitem{SugTan} S. Sugano, H. Tanaka, Boundedness of fractional integral operators on generalized Morrey spaces. Sci. Math. Jpn.  58  (2003),  no. 3, 531–540.

\bibitem{Torch1} A. Torchinsky, Interpolation of operators and Orlicz classes, Studia Math. 59 (1976), 177-207.

\end{thebibliography}
\end{document}